\documentclass[11pt]{amsart}

\usepackage{amssymb}
\usepackage{mathrsfs}
\usepackage{amsfonts}
\usepackage{latexsym,amsmath,amsthm,amssymb,amsfonts}
\usepackage[usenames]{color}
\usepackage{xspace,colortbl}
\usepackage{graphicx}
\usepackage{tipa}
\usepackage{amsthm}

\newcommand{\be}{\begin{equation}}
\newcommand{\ee}{\end{equation}}
\newcommand{\beq}{\begin{eqnarray}}
\newcommand{\eeq}{\end{eqnarray}}

\newtheorem{prop}{Proposition}[section]

\newtheorem{remark}[prop]{Remark}

\def\begeq{\begin{equation}}
\def\endeq{\end{equation}}

\def\odot{\setbox0=\hbox{$\bigcirc$}\relax \mathbin {\hbox
to0pt{\raise.5pt\hbox to\wd0{\hfil $\wedge$\hfil}\hss}\box0 }}

\numberwithin{equation} {section}

\numberwithin{equation}{section}
\newtheorem{theorem}{\bf Theorem}[section]
\newtheorem{proposition}[theorem]{\bf Proposition}
\newtheorem{definition}[theorem]{\bf Definition}
\newtheorem{lemma}[theorem]{\bf Lemma}

\newtheorem{corollary}[theorem]{\bf Corollary}

\allowdisplaybreaks

\begin{document}
\title[Pogorelov type estimate for sum Hessian quotient equations]
 {Pogorelov type interior $C^2$ estimate for sum Hessian quotient equations on Riemannian manifolds}\thanks{\it {The research is partially supported by NSFC (No. 12261105).}}

\thanks{Keywords:
Sum Hessian quotient equations; k admissible; Pogorelov type estimates; Interior $C^2$ estimate; Riemannian maniflods}

\author{
Guanghan Li,~~Chenyang Liu*}
\address{
School of Mathematics and Statistics, Wuhan University, Wuhan 430072, China
}

\email{ghli@whu.edu.cn,liuchy@whu.edu.cn}

\thanks{$\ast$ Corresponding author}

\date{}
\maketitle
\begin{abstract}
In this paper, we mainly study the interior $C^{2}$ estimates for a class of sum Hessian quotient equations on Riemannian manifolds. For $0\leq l<k< n$, we establish interior $C^{2}$ estimates at the center of a geodesic ball. Let $\Omega$ be a bounded domain (with smooth boundary) on Riemannian manifold. For $0\leq l<k\leq n$, we establish Pogorelov type estimates on $\Omega$ with the vanishing Dirichlet boundary condition.
\end{abstract}

\section{Introduction} \label{S1}

Pogorelov type interior $C^{2}$ estimates play a key role in the theory of fully nonlinear elliptic equations. In 1959, Heinz~\cite{Hein59} proved a pure interior $C^2$ estimate for the two-dimensional Monge-Amp\`{e}re 
equation $\det(\nabla^2u)=f$. When $n\ge3$, Pogorelov~\cite{Pogo78} showed that pure interior estimates no longer hold. He established an interior $C^2$ estimate for all dimensions with Dirichlet boundary conditions, which is now called Pogorelov type interior $C^{2}$ estimate, or simply Pogorelov type estimate. Later, Chou-Wang \cite{CW01, Wang09} extended this result to $k$-Hessian equations. Explicitly, for a bounded domain $\Omega\subset \mathbb{R}^{n}$ with smooth boundary $\partial\Omega$, they consider the following Dirichlet problem of the $k$-Hessian equation
\begin{equation}\label{k-hessian-1}
\left\{
\begin{aligned}
&\sigma_{k}(\nabla^{2}u)=f(x,u),\qquad&&~\mathrm{in}~\Omega,\\
&u=0, \qquad&&~\mathrm{on}~\partial\Omega,\\
\end{aligned}
\right.
\end{equation}
where $\sigma_{k}(\nabla^{2}u)$ denotes $\sigma_{k}(\lambda(\nabla^{2}u))$ (see \eqref{sigma-1}), with $\lambda(\nabla^{2}u)$ being the eigenvalues of the Hessian matrix $\nabla^{2}u$. Chou-Wang proved that, in the case $f\geq 0$ and $u<0$, there exists a constant $\beta>0$ such that 
\begin{equation*}
\sup(-u)^{\beta} |\nabla^{2} u| \leq C
\end{equation*}
for any $k$-convex solution $u\in C^{2}(\Omega)$ to equation \eqref{k-hessian-1}. Here, $u$ is called $k$-convex if $\lambda(\nabla^{2}u)$ belongs to the Garding's cone $\Gamma_{k}$ (see \eqref{Gamma-1}). Subsequently, Li-Ren-Wang \cite{LRW16} established Pogorelov type estimates for $(k+1)$-convex solutions to the Dirichlet problem of equations \eqref{k-hessian-1}, where $f$ further depends on the gradient $\nabla u$.

In \cite{CH21},  Chu-Jiao studied the prescribed curvature problem for hypersurfaces $M^{n}$ in $\mathbb{R}^{n+1}$ and proved Pogorelov type estimates for the Dirichlet problem of the corresponding $k$-Hessian equation:
\begin{equation}\label{k-hessian-2}
\left\{
\begin{aligned}
&\sigma_{k}(\tilde{\eta})=f(x,u,\nabla u),\qquad&&~\mathrm{in}~\Omega,\\
&u=0, \qquad&&~\mathrm{on}~\partial\Omega,\\
\end{aligned}
\right.
\end{equation}
where $\tilde{\eta} = (\tilde{\eta}_1,\dots,\tilde{\eta}_n)$ are the eigenvalues of $(\Delta u)I - \nabla^{2}u$, and $\Omega\subset \mathbb{R}^{n}$ is a bounded domain with smooth boundary $\partial\Omega$. The operator $\sigma_{k}$ applied to the Hessian matrix $(\Delta u)I - \nabla^{2}u$ first came from the Gauduchon conjecture \cite{Gau84, STW17} in complex geometry. Harvey-Lawson \cite{HL11,HL12} introduced the concept that if the real Hessian matrix is replaced by the complex Hessian matrix, a function $u$ is called $(n-1)$-plurisubharmonic provided $(\Delta u)I - \nabla^{2}u$ is nonnegative definite. Thus, complex Monge-Amp\`{e}re type equations for $(n-1)$-plurisubharmonic functions can be defined as follows
\begin{equation}\label{k-hessian-3}
\det\left(\left(\sum_{m=1}^{n}\frac{\partial^{2}u}{\partial z_{m}\partial \bar{z}_{m}}\right)\delta_{ij}-\frac{\partial^{2}u}{\partial z_{i}\partial \bar{z}_{j}}\right)=f.
\end{equation}
For strict pseudo-convex domains in the complex Euclidean $n$-space $\mathbb{C}^{n}$, the Dirichlet problem of \eqref{k-hessian-3} with positive $f$ was solved by Li \cite{L04}. Tosatti-Weinkove \cite{TW17} studied equation \eqref{k-hessian-3} on K$\ddot{\mathrm{a}}$hler manifolds. For more references, we refer readers to \cite{Dong23,FWW10,S18} and references therein.

Recently, Ren-Wang \cite{RW25-2} considered the following equation, which is called the sum Hessian quotient equation:
\begin{equation}\label{Hessian quo-1}
\frac{\sigma_{k}(\tilde{\eta})+\alpha\sigma_{k-1}(\tilde{\eta})}{\sigma_{l}(\tilde{\eta})+\alpha\sigma_{l-1}(\tilde{\eta})}=f(x,u,\nabla u),
\end{equation}
where $\tilde{\eta} = (\tilde{\eta}_1,\dots,\tilde{\eta}_n)$ are the eigenvalues of $(\Delta u)I - \nabla^{2}u$, $f$ is a positive function, $\alpha\geq0$ and $0\leq l<k\leq n$. For $0 \leq l < k < n$, they proved interior estimates for \eqref{Hessian quo-1} and Pogorelov type estimates for its Dirichlet problem on bounded domains in $\mathbb{R}^{n}$. In the case $k=n$ they obtained weak Pogorelov type estimates for $0 \leq l < n-1$. Ren-Wang \cite{RW25-1} obtained similar results for the sum Hessian equation, i.e., the case $l=0$. Pogorelov type interior $C^{2}$ estimates play an essential role in the regularity theory, which is a central part of the prescribed curvature problem. In \cite{CTX23}, Chen-Tu-Xiang considered the prescribed curvature problem for Hessian quotient equations with Dirichlet conditions on Riemannian manifolds and proved the existence of solutions. Li and Sheng \cite{LS13} studied the prescribed Weingarten curvature problem for closed hypersurfaces on Riemannian manifolds. A natural question is whether the corresponding Pogorelov type interior $C^{2}$ estimates can be established on Riemannian manifolds.

Let $M^n$ be a complete $n$-dimensional Riemannian manifold with Riemannian metric $g$. In this paper, we consider the interior $C^{2}$ estimates for the following sum Hessian quotient equation:
\begin{equation}\label{Hessian quo-2}
\frac{\sigma_{k}(\eta)+\alpha\sigma_{k-1}(\eta)}{\sigma_{l}(\eta)+\alpha\sigma_{l-1}(\eta)}=f(x,u,\nabla u),
\end{equation}
where $u(x)$ is a function defined over $\Omega\subset M^n$, $f$ is a positive function, $\alpha\geq 0$, $0\leq l<k\leq n$, $\eta= (\eta_1,\dots,\eta_n)$ are the eigenvalues of $(\tau\Delta u)I - \nabla^{2}u$ with $\tau\geq1$, and $\nabla$, $\Delta$, $\nabla^2$ denote the gradient, Laplace, Hessian operators on $M^{n}$, respectively. 
Different from the Euclidean case, on a general Riemannian manifold interchanging covariant derivatives gives rise to extra terms involving the curvature tensor, which makes it more difficult to obtain interior estimates for equation \eqref{Hessian quo-2}. Moreover, another important ingredient in the Euclidean proof is the fact that the second derivatives of the Euclidean distance function are simply constant (in fact, $\nabla^{2}(|x|^{2})=2I$), which allows it to be used in constructing test functions to absorb bad terms in the maximum principle argument. To the best of our knowledge, on a general Riemannian manifold no such simple distance function is available. To overcome these difficulties, we develop in this paper a systematic approach. By applying the Ricci identity, we give the explicit form of the curvature terms arising from the interchange of covariant derivatives, and prove that these terms can all be controlled. Then we define the distance function on the Riemannian manifold and apply the Hessian comparison theorem to control the Hessian of its square, which is then used to absorb the bad terms, serving the same role as $|x|^{2}$ in the Euclidean case. And we resolve the difficulties arising from the singularity of the distance function at the origin and the inapplicability of the comparison theorem on the cut locus, which significantly reduces the limitations on the domain where the estimates are established.

We first give the definitions of $k$-convex and $k$-admissible.

\begin{definition}
Let $\Omega$ be a domain and $u\in C^{2}(\Omega)$. Denote by $\lambda(x)=(\lambda_{1}(x),\dots,\lambda_{n}(x))$ the eigenvalues of the Hessian $\nabla^{2}u(x)$, a function $u$ is called $k$-convex if $\lambda(x)$ are in $\Gamma_{k}$ for all $x\in\Omega$, where $\Gamma_{k}$ is the Garding's cone
\begin{equation}\label{Gamma-1}
\Gamma_{k}=\{\lambda\in\mathbb{R}^{n}|\sigma_{m}(\lambda)>0, m=1,\dots, k\}.
\end{equation}
Similarly, a function $u$ is called $k$-admissible if $\lambda(x)$ are in $\Gamma'_{k}$ for all $x\in\Omega$, where
\begin{equation}\label{Gamma-2}
\Gamma'_{k}=\{\lambda\in\mathbb{R}^{n}|\sigma_{m}(\eta)>0, m=1,\dots, k\}.
\end{equation}
\end{definition}

To state our main result, we next introduce some notation. For any $x\in M^{n}$, denote by $\operatorname{Cut}(x)$ the cut locus of $x$ in $M^{n}$. Let $\Omega$ be a bounded domain in $M^{n}$ with smooth boundary $\partial\Omega$.
For any $x \in M^n$, let $p_0$ be a fixed point. The geodesic distance between $x$ and $p_0$ is defined by
\begin{equation*}
\rho(x) = \operatorname{dist}_{M^{n}}(x, p_{0}).
\end{equation*}
If there exists a point $p_0 \in \Omega$ such that $\overline{\Omega} \subset B_r(p_0)$, then we say that $\Omega$ has \emph{covering radius} at most $r$, and we denote this by
\begin{equation*}
\operatorname{rad}(\Omega) \leq r.
\end{equation*}
Let $\operatorname{Sec}(\Pi)$ denote the sectional curvature of the plane $\Pi$ in $M^{n}$, define
\begin{equation*}
\overline{K}_{\Omega} := \sup_{x \in \Omega,~\Pi \subset T_{x} M^n} \operatorname{Sec}(\Pi),\quad\quad \underline{K}_{\Omega} := \inf_{x \in \Omega,~\Pi \subset T_{x} M^n} \operatorname{Sec}(\Pi), 
\end{equation*}
where $\Pi$ runs over all $2$-dimensional subspaces of $T_{x} M^n$.
Riemannian manifolds $M^n$ of constant sectional curvature are precisely the space forms, i.e., $\operatorname{Sec}(M^n)\equiv c$. 
If $\operatorname{Sec}(M^n)\equiv 0$, then $M^n$ is the Euclidean space $\mathbb{R}^n$; if $\operatorname{Sec}(M^n)\equiv -1$, then $M^n$ is the hyperbolic space $\mathbb{H}^n$; if $\operatorname{Sec}(M^n)\equiv 1$, then $M^n$ is the sphere $\mathbb{S}^n$. 
Here we list the main results.

\begin{theorem}\label{maintheorem1}
For any point $o\in M^{n}$, suppose that $0\leq l<k<n$, $u\in C^{4}(B_{r}(o))$ is a $(k-1)$-admissible solution to the sum Hessian quotient equation \eqref{Hessian quo-2}, where $B_{r}(o)$ is a geodesic ball of radius $r$ centered at the origin in $M^{n}$, $f\in C^{2}(B_{r}(o)\times\mathbb{R}\times\mathbb{R}^{n})$ with $0<m_{1}\leq f\leq m_{2}$. Then
\begin{equation}\label{estimates-1}
|\nabla^{2}u(o)|\leq C(1+\sup|\nabla u|),
\end{equation}
where $C$ is a positive constant depending only on $n$, $k$, $l$, $r$, $m_{1}$, $m_{2}$, $||R_{ijkl}||_{C^{1}}$ and $||f||_{C^{2}}$.
\end{theorem}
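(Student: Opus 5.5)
We plan to follow the maximum principle scheme of Ren--Wang \cite{RW25-2}, adapted to the Riemannian setting. Write $U=(\tau\Delta u)g-\nabla^2u$ with eigenvalues $\eta$. Set
\[
G(\eta)=\log\bigl(\sigma_k(\eta)+\alpha\sigma_{k-1}(\eta)\bigr)-\log\bigl(\sigma_l(\eta)+\alpha\sigma_{l-1}(\eta)\bigr),
\]
so that the equation reads $G=\log f$. Since $u$ is $(k-1)$-admissible and $f>0$, the numerator and denominator are positive, and $G$ is elliptic and concave on the relevant cone. We will use the quotient inequalities proved in \cite{RW25-2} (concavity of the sum quotient, and lower bounds for $\sum_i G^{ii}$). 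Let $\lambda_{\max}$ be the largest eigenvalue of $\nabla^2u$. Since $\Delta u=\sigma_1(\eta)/(n\tau-1)>0$ and $k<n$, bounding $\lambda_{\max}$ controls $|\nabla^2u|$.

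The test function on $B_r(o)$ is
\[
\phi=\log\lambda_{\max}+\beta\log\bigl(r^2-\rho^2\bigr)+\tfrac{a}{2}|\nabla u|^2,\qquad \rho=\operatorname{dist}(\cdot,o).
\]
If $r$ exceeds the injectivity/convexity radius at $o$, we first shrink $r$ to a small $r_0$ depending on $\|R_{ijkl}\|_{C^1}$. This makes $\rho^2$ smooth on $B_{r_0}(o)$, avoids the cut locus, and lets the Hessian comparison theorem give
\[
\tfrac12 g\le\nabla^2(\rho^2/2)\le 2g .
\]
This is the substitute for $\nabla^2|x|^2=2I$. At an interior maximum point $x_0$, we pick normal coordinates diagonalizing $\nabla^2u$ with $u_{11}=\lambda_{\max}$, and perturb in the standard way if $\lambda_{\max}$ is not simple. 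We then differentiate the equation $G(U)=\log f$ once and twice.

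The key steps are as follows.
\begin{enumerate}
\item Commute covariant derivatives via the Ricci identity, $u_{11ii}=u_{ii11}+O(|R|\,|\nabla^2u|+|\nabla R|\,|\nabla u|)$ and $u_{kij}=u_{ijk}+O(|R||\nabla u|)$. This produces curvature errors of size $C\lambda_1\sum_iG^{ii}+C(1+|\nabla u|)\sum_iG^{ii}$.
\item Absorb these errors. The term $a\sum_iG^{ii}u_{ii}^2$, coming from $|\nabla u|^2$, and the term $\beta\sum_iG^{ii}\nabla^2_{ii}(\rho^2)/(r^2-\rho^2)\gtrsim\beta\sum_iG^{ii}/r^2$ are the good terms that absorb them.
\item The $f$-derivatives are bounded by $C(1+|\nabla u|^2+\lambda_1)$ after using the critical point equation. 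Dividing by $\lambda_1$ and using $\sum_iG^{ii}\ge c>0$ keeps them harmless.
\item Treat the third-order terms. We use concavity of $G$ together with the $-G^{pp,qq}$ cross terms to handle $\sum_iG^{ii}u_{11i}^2/\lambda_1^2$. We split the indices into $i=1$ and $i\ne1$. For $i\ne1$ we use the critical point relation $u_{11i}/\lambda_1=-\beta\nabla_i(\rho^2)/(r^2-\rho^2)-a u_iu_{ii}$, with $\beta$ large and $a$ small, as in \cite{CW01,RW25-2}. For $i=1$ we need the Ren--Wang case analysis on whether $\eta_1$ is comparable to $\lambda_1$.
\end{enumerate}
Finally, evaluating $\phi(o)\le\phi(x_0)$ gives $\lambda_{\max}(o)\le C(1+\sup|\nabla u|)$.

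The main obstacle is step 4. It requires the refined concavity-type inequality for the sum Hessian quotient in \cite{RW25-2}, which uses $k<n$ to guarantee $G^{11}\ge c\sum_iG^{ii}$ when $\lambda_1$ is large. That inequality must survive the extra $O(\lambda_1\sum_iG^{ii})$ curvature terms. To get this, we first try taking $\beta$ large enough that the $\rho^2$ term dominates these curvature errors. The cost is accepted in the form of the power $(r^2-\rho^2)^\beta$, which only affects interior points, not $o$.
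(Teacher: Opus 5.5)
Your overall scheme (cutoff $r^2-\rho^2$, a gradient term, Ricci identities, Hessian comparison) is the paper's, but the device you rely on for the curvature terms has the wrong sign. Since $\nabla_{ii}\log(r^2-\rho^2)=-\frac{(\rho^2)_{ii}}{r^2-\rho^2}-\frac{((\rho^2)_i)^2}{(r^2-\rho^2)^2}$, and your own comparison bound gives $(\rho^2)_{ii}\ge 1$, the cutoff contributes $-\beta\sum_iG^{ii}\bigl[\frac{(\rho^2)_{ii}}{r^2-\rho^2}+\frac{((\rho^2)_i)^2}{(r^2-\rho^2)^2}\bigr]\le-\frac{\beta}{r^2}\sum_iG^{ii}$ to $\sum_iG^{ii}\phi_{ii}$. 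This is a bad term, and taking $\beta$ large only makes it worse, so it cannot dominate the curvature errors. A positive $A\rho^2$ term of the kind you have in mind appears only in the Pogorelov argument of Section~\ref{S5}. (The same sign slip is in your critical point relation, harmlessly.)

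The only good term is $a\sum_iG^{ii}u_{ii}^2\ge aG^{11}\lambda_1^2$; in the paper it is $\frac{h'}{h}F^{11}u_{11}^2$ with $h'/h\ge\frac{1}{3A}$. What closes the argument is Lemma~\ref{prop-6}\,(5),(6): for $k<n$, $G^{11}\ge c\sum_iG^{ii}\ge c'>0$. With this, after dividing by $\lambda_1$:
the curvature errors are $O(\sum_iG^{ii})$;
the cutoff terms are $O(\beta^2(r^2-\rho^2)^{-2}\sum_iG^{ii})$, for which you need only the \emph{upper} bound $\nabla^2\rho^2\le Cg$ coming from the lower curvature bound, as in \eqref{3-ieq11-2}, not the lower bound $\nabla^2(\rho^2/2)\ge\frac12 g$ you aim for;
and $f_{11}/\lambda_1=O(\lambda_1)$.
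For the last point, note that $f_{11}$ is quadratic in $\lambda_1$ through $\partial^2_{u_iu_j}f\,u_{i1}u_{j1}$, not $O(1+|\nabla u|^2+\lambda_1)$; this term is what fixes how the final constant depends on $\sup|\nabla u|$. All of these are absorbed by $ac\lambda_1^2\sum_iG^{ii}$ once $(r^2-\rho^2)\lambda_1$ is large.

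Likewise, the third-order terms $G^{ii}u_{11i}^2/\lambda_1^2$ are handled for \emph{every} $i$, including $i=1$, by the critical point equation alone, as in \eqref{3-ieq3}. This requires $a\lesssim 1/\sup|\nabla u|^2$, so that $2a^2u_i^2u_{ii}^2$ is absorbed by $au_{ii}^2$; the paper builds this in through $h(t)=(1-t/A)^{-1/3}$ and $h''h-3(h')^2>0$. No splitting of $i=1$ from $i\neq 1$ and no Ren--Wang $\delta$-dichotomy is needed. That case analysis is required only for $k=n$ (Section~\ref{S5}), where $F^{11}$ is not bounded below by $\sum_iF^{ii}$. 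So what you call the main obstacle is not one here, while the device you propose for the genuinely Riemannian difficulty does not work.

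There is also a gap in how you avoid the cut locus. Shrinking $r$ to an $r_0$ depending only on $\|R_{ijkl}\|_{C^1}$ cannot make $\rho^2$ smooth on $B_{r_0}(o)$, because curvature bounds do not control the injectivity radius: a flat torus $\mathbb{R}^n/\epsilon\mathbb{Z}^n$ has $R\equiv 0$ and injectivity radius $\epsilon/2$.

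The paper instead works on the whole ball and, when the maximum point $x_0$ lies in $\operatorname{Cut}(o)$, uses Calabi's trick. Along a minimizing unit-speed geodesic $\gamma$ from $o$ to $x_0$, it replaces $\rho$ by $\rho_1+\varepsilon$ with $\rho_1=\operatorname{dist}(\cdot,\gamma(\varepsilon))$. This function is smooth near $x_0$ and satisfies $\rho_1+\varepsilon\ge\rho$ with equality at $x_0$. Hence $x_0$ is still a maximum of the modified test function, and the comparison estimate applies there. Alternatively, you could pull $u$ and $g$ back by $\exp_o$ to a ball in $T_oM^n$ of radius below the conjugate radius, which is controlled by an upper curvature bound. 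The pulled-back metric has the same curvature bounds, and $\rho$ is smooth away from the origin. With these two corrections your argument reduces to the paper's.
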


Following the proof of Theorem \ref{maintheorem1}, let $\Omega$ be a bounded domain with smooth boundary $\partial\Omega$ in Riemannian manifold $M^{n}$. Then we derive the Pogorelov type estimates for the Dirichlet problem of equation \eqref{Hessian quo-2}:
\begin{equation}\label{Hessian quo-3}
\left\{
\begin{aligned}
&\frac{\sigma_{k}(\eta)+\alpha\sigma_{k-1}(\eta)}{\sigma_{l}(\eta)+\alpha\sigma_{l-1}(\eta)}=f(x,u,\nabla u),\qquad&&~\mathrm{in}~\Omega,\\
&u=0, \qquad&&~\mathrm{on}~ \partial\Omega.\\
\end{aligned}
\right.
\end{equation}

\begin{theorem}\label{maintheorem2}
Suppose that $0\leq l<k<n$, $u\in C^{4}(\Omega)\cap C^{2}(\overline{\Omega})$ is a $(k-1)$-admissible solution to the Dirichlet problem \eqref{Hessian quo-3} in a bounded domain $\Omega\subset M^{n}$, $f\in C^{2}(\overline{\Omega}\times\mathbb{R}\times\mathbb{R}^{n})$ with $f>0$. Then
\begin{equation*}
(-u)|\nabla^{2}u|\leq C,
\end{equation*}
where $C$ is a positive constant depending only on $n$, $k$, $l$, $||f||_{C^{2}}$, $||R_{ijkl}||_{C^{1}}$ and $||u||_{C^{1}}$.
\end{theorem}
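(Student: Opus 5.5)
The plan is the standard Pogorelov maximum-principle argument of Chou--Wang and Ren--Wang, adapted to the manifold. The curvature terms that appear when covariant derivatives are commuted are absorbed using the bounds on $\|R_{ijkl}\|_{C^1}$.

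Since $\tau\geq 1$ and $\sigma_1(\eta)=(n\tau-1)\Delta u>0$, $u$ is subharmonic. Hence the maximum principle with $u=0$ on $\partial\Omega$ gives $u<0$ in $\Omega$, and $(-u)$ vanishes on the boundary. Write the equation as
\[
G(\eta):=\log\bigl(\sigma_k(\eta)+\alpha\sigma_{k-1}(\eta)\bigr)-\log\bigl(\sigma_l(\eta)+\alpha\sigma_{l-1}(\eta)\bigr)=\log f,
\]
and regard $G$ as a function $F$ of the matrix $U=(\tau\Delta u)g-\nabla^2u$. Setting $F^{ij}=\partial F/\partial U_{ij}$, the linearized operator acting on $u_{ij}$ is $L=\tilde F^{pq}\nabla_p\nabla_q$ with
\[
\tilde F^{pq}=\tau\Bigl(\sum_i F^{ii}\Bigr)g^{pq}-F^{pq}.
\]
Let $\lambda_{\max}$ be the largest eigenvalue of $\nabla^2 u$. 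Because $\sum\lambda_i>0$ and $\eta\in\Gamma'_{k-1}$, we have $|\nabla^2 u|\le C\lambda_{\max}$. I will use the test function
\[
\phi=\beta\log(-u)+\log\lambda_{\max}+\frac{a}{2}|\nabla u|^2 ,
\]
with $\beta=1$ if possible (otherwise a large $\beta$, which still gives the stated bound after a power). Since $\phi\to-\infty$ on $\partial\Omega$, $\phi$ attains an interior maximum $x_0$. To handle possible non-smoothness of $\lambda_{\max}$, I perturb as usual ($\nabla^2u-\epsilon$ times a fixed matrix) or use a viscosity argument. At $x_0$ I choose normal coordinates with $\nabla^2 u$ diagonal and $u_{11}=\lambda_{\max}$; then $\eta_i=\tau\Delta u-u_{ii}$, so $\eta_1$ is the smallest eigenvalue.

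Next I differentiate the equation twice in the $e_1$ direction. Commuting derivatives gives
\[
\nabla_p\nabla_q u_{11}=\nabla_1\nabla_1 u_{pq}+R*\nabla^2u+\nabla R*\nabla u .
\]
The remainder is bounded by $C\bigl(\sum F^{ii}\bigr)(\lambda_1+1)$, which is harmless once divided by $\lambda_1$. The term $\frac a2|\nabla u|^2$ then produces
\[
a\,\tilde F^{ii}u_{ii}^2-Ca\Bigl(\sum F^{ii}\Bigr).
\]
The term $\log(-u)$ yields $\beta\tilde F^{ii}u_{ii}/u-\beta\tilde F^{ii}u_i^2/u^2$. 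Here $\tilde F^{ii}u_{ii}$ is controlled via homogeneity/concavity, since $\sum F^{ii}\eta_i$ is bounded, and $\Delta u\le n\lambda_1$. The derivative of $f(x,u,\nabla u)$ contributes $f_{p_j}u_{j11}$ terms, which are absorbed by the first-order condition $\nabla\phi=0$ together with the $a$-term. This is where the $\|f\|_{C^2}$ and $\|u\|_{C^1}$ dependence enters.

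The main obstacle is the third-order terms. These are
\[
-\tilde F^{ii}\frac{u_{11i}^2}{\lambda_1^2},\qquad
-F^{pq,rs}\nabla_1U_{pq}\nabla_1U_{rs},\qquad
\frac{2\tilde F^{ii}u_{1ii}^2}{\lambda_1(\lambda_1-\lambda_i)}.
\]
For these I will follow Ren--Wang's argument for the sum quotient operator in \cite{RW25-2}. The key inputs are the concavity of $G$ on $\Gamma'_{k-1}$ (the restriction $k<n$ is used here) and the inequality $\tilde F^{11}\ge c\sum F^{ii}$ once $\lambda_1$ is large. I will then split into two cases. When $\lambda_i\geq-\delta\lambda_1$ for all $i$, the third-order terms are handled by the first-order condition $u_{11i}/\lambda_1=-\beta u_i/u-a u_iu_{ii}$. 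When some $\lambda_i<-\delta\lambda_1$, the good term $a\tilde F^{ii}u_{ii}^2$ dominates.

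In either case one arrives at
\[
0\ge c\,a\Bigl(\sum F^{ii}\Bigr)\lambda_1^2-\frac{C}{u^2}\Bigl(\sum F^{ii}\Bigr),
\]
and $\sum F^{ii}\ge c>0$, since $f$ is bounded below on $\overline\Omega$. This gives $(-u)\lambda_1\le C$ at $x_0$, and hence everywhere, because $\phi(x)\le\phi(x_0)$ and $|\nabla u|$ is bounded.
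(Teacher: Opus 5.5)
Your proposal is correct and is essentially the paper's proof. It uses the same test function $\log(-u)+\log\lambda_{\max}+\frac a2|\nabla u|^2$ with $\beta=1$, concavity to discard the $F^{pq,rs}$ terms, the first-order condition to absorb $-\tilde F^{ii}u_{11i}^2/\lambda_1^2$, and the $k<n$ inequality $\tilde F^{11}\ge c\sum F^{ii}$ (Lemma~\ref{prop-6}(5)), which holds unconditionally and is exactly why the paper needs no two-case split. One correction to a side remark: falling back on $\beta>1$ would only give $(-u)^{\beta}\lambda_{\max}\le C$, which is strictly weaker than the stated bound near $\partial\Omega$, so it matters that $\beta=1$ already works here.
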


For the case $k=n$, we have the following result.

\begin{theorem}\label{maintheorem3}
Suppose that $0\leq l<n-1$ and $k=n$, $u\in C^{4}(\Omega)\cap C^{2}(\overline{\Omega})$ is a $(n-1)$-admissible solution to the Dirichlet problem \eqref{Hessian quo-3} in a bounded domain $\Omega\subset M^n$ (when $\overline{K}_{\Omega}>0$, we additionally require $\operatorname{rad}(\Omega) <\pi/2\sqrt{\overline{K}_{\Omega}}$), $f\in C^{2}(\overline{\Omega}\times\mathbb{R}\times\mathbb{R}^{n})$ with $f>0$. Then
\begin{equation*}
(-u)|\nabla^{2}u|\leq C,
\end{equation*}
where $C$ is a positive constant depending only on $n$, $l$, $\tau$, $||f||_{C^{2}}$, $||R_{ijkl}||_{C^{1}}$ and $||u||_{C^{1}}$.
\end{theorem}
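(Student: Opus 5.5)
We prove Theorem \ref{maintheorem3} by the maximum principle applied directly to a test function with exponent $1$ on $(-u)$. Since $\eta\in\Gamma'_{n-1}$ implies $\Delta u>0$ (for $\tau\geq1$, $\sigma_1(\eta)=(n\tau-1)\Delta u$), $u$ is subharmonic. With $u=0$ on $\partial\Omega$, this gives $u<0$ in $\Omega$. Because $\eta_i=\tau\Delta u-\lambda_i$, the bound $|\nabla^2u|\le C\eta_{\max}$ holds with a constant $C$ depending on $n$ and $\tau$. It therefore suffices to bound $(-u)\eta_1$, where $\eta_1$ is the largest eigenvalue of $(\tau\Delta u)I-\nabla^2u$.

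The test function is
\[
\phi=\log\eta_1+\log(-u)+\frac{a}{2}|\nabla u|^2+\frac{b}{2}\rho^2 ,
\]
where $\rho$ is the distance to the point $p_0$ realising $\operatorname{rad}(\Omega)$. Suppose $\phi$ attains its maximum at $x_0\in\Omega$. Near $x_0$ we perturb $\eta_1$ to a smooth function in the standard way and choose normal coordinates diagonalising $\nabla^2 u$. Let $F$ denote the operator $\log$ of the quotient and $F^{ii}$ its derivatives in $\eta$, and set $G^{ii}=\sum_j F^{jj}\tau-F^{ii}$, the coefficients of the linearised operator in terms of $u_{ii}$. The key steps at $x_0$ are the following.

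\textbf{Step 1: third derivatives.} Differentiate the equation twice in the $e_1$ direction. Then use concavity of $F$, together with the sum-quotient structure (as in Ren--Wang), to get the good term $-F^{pq,rs}\eta_{pq1}\eta_{rs1}$ and to handle $\sum F^{ii}\eta_{1i}^2/\eta_1^2$. The commutator terms from the Ricci identity, of the form $R*\nabla^2u$ and $\nabla R*\nabla u$, are bounded by $C(1+\eta_1)\sum F^{ii}$.

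\textbf{Step 2: the term $\log(-u)$.} From $\phi_i=0$ we get $\eta_{11i}/\eta_1=-u_i/u-au_iu_{ii}-b\rho\rho_i$. The dangerous term $G^{ii}u_i^2/u^2$ is handled by the usual Pogorelov case split. If $\eta_{ii}$ is comparable to $\eta_1$, the squared relation plus third-derivative concavity absorbs it. Otherwise we use $\phi_i=0$ to trade it for $a^2u_i^2u_{ii}^2+b^2$ terms, which are dominated by $aG^{ii}u_{ii}^2$ coming from the gradient term. Here $k=n$ matters: the weak estimate allows only $l<n-1$, because we need $F^{ii}\ge c\sum F^{jj}$ for all $i\neq1$ when $\eta\in\Gamma'_{n-1}$ and $k=n$. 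This is the key lemma.

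\textbf{Step 3: the distance term.} We need $G^{ii}(\rho^2)_{ii}\ge c_0\sum F^{jj}$ with $c_0>0$, which makes $b\sum F$ a good term. By the Hessian comparison theorem, $\nabla^2(\rho^2/2)\ge \sqrt{\overline K}\rho\cot(\sqrt{\overline K}\rho)\,g$ off the cut locus. The condition $\operatorname{rad}(\Omega)<\pi/2\sqrt{\overline K_\Omega}$ makes the right side $\ge c_0 g$; when $\overline K_\Omega\le0$ it is $\ge g$. Two technical issues remain. On the cut locus, and at $p_0$ where $\rho^2$ is still smooth, we use Calabi's trick: replace $\rho$ by a smooth upper barrier touching at $x_0$, namely the distance to a point slightly along the minimising geodesic. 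The restriction on $\operatorname{rad}(\Omega)$ also places $\Omega$ inside the injectivity-type region where this works.

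\textbf{Conclusion and main obstacle.} Combining the three steps, $0\ge L\phi$ yields $\sum F^{ii}\big(b c_0-C/\eta_1 - C(-u)^{-1}\eta_1^{-1}\big)+aG^{ii}u_{ii}^2-C$. Choosing $a$ and then $b$ large gives $(-u)\eta_1\le C$ at $x_0$, and hence everywhere, since $\phi$ is maximal at $x_0$ and all other terms are bounded by $\|u\|_{C^1}$. The main obstacle is Step 2 with exponent exactly $1$. Controlling $G^{ii}u_i^2/u^2$ without an extra power requires the lower bound on $F^{ii}$ described above, which is true only for $k=n$ and $l<n-1$; proving it for sum Hessian quotients with the $\alpha\sigma_{k-1}$ terms is where the most work lies.
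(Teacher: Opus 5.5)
There is a genuine gap, and it sits exactly at the step you flag as the main obstacle.

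\textbf{The key lemma is false.} Your ``key lemma'' $F^{ii}\ge c\sum_jF^{jj}$ for all $i\ne 1$ fails for $k=n$ whenever $n\ge 3$. Take $\alpha=0$ and $\eta=(M,\dots,M,\epsilon)$ with $\epsilon\sim M^{-(n-1-l)}$, so that $\sigma_n(\eta)/\sigma_l(\eta)\sim 1$. Since $\partial_{\eta_i}(\sigma_n/\sigma_l)=\sigma_{n-1}(\eta|i)\,\sigma_l(\eta|i)/\sigma_l^2(\eta)$, every entry with $\eta_i=M$ has $F^{ii}\sim M^{-1}$, while $\sum_jF^{jj}\sim M^{n-1-l}$.

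For $\tau=1$ this configuration is $\nabla^2u\approx\mathrm{diag}(M,0,\dots,0)$. That is precisely the hard regime, the paper's Case 1. There the coefficient of $u_{11}$ in the linearized operator is also small: $T^{11}\sim M^{-1}\ll\sum_iT^{ii}$.

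The bound that actually drives an exponent-$1$ argument is $T^{11}\ge c\sum_iT^{ii}$, i.e.\ Lemma~\ref{prop-6}(5). It holds only for $k<n$, and it is what makes Theorem~\ref{maintheorem2} work with $(-u)^1$. Its failure at $k=n$ is the whole difficulty, so your explanation of the role of $k=n$ is backwards.

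The hypothesis $l<n-1$ enters somewhere else. In Lemma~\ref{5-lemma-1} it gives $\sum_iT^{ii}+Q\ge Cu_{11}^{n-1-l}\ge Cu_{11}$. This absorbs the $-Cu_{11}$ term produced by the $\nabla u$-dependence of $f$ in $\psi_{11}/u_{11}$. Your final inequality drops that term and writes $-C$ instead.

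\textbf{Why exponent $1$ does not close.} Without that lemma, in the directions $i\ge2$ the only good third-order quantity is the concavity term. By Lemma~\ref{5-lemma-2} it supplies $\frac{2}{1+\delta}\sum_{i\ge2}T^{ii}u_{11i}^2/u_{11}^2$. This has to pay for two bad terms:
\begin{itemize}
\item $T^{ii}u_{11i}^2/u_{11}^2$;
\item $\beta T^{ii}u_i^2/u^2$, which by the critical-point equation equals $\frac1\beta T^{ii}\bigl(u_{11i}/u_{11}+au_iu_{ii}+A(\rho^2)_i\bigr)^2$.
\end{itemize}
With $\beta=1$ you need a coefficient of about $2$ but have only $\frac{2}{1+\delta}$.

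The leftover $A^2T^{ii}$ term also cannot be beaten by the good term $A\sum_iT^{ii}$. This is because $A$ must be large to absorb the curvature commutators $C\sum_iT^{ii}$ and the $-Cu_{11}$ term.

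\textbf{What the paper does instead.} The paper uses $\beta\log(-u)$ with $\beta\gg A$. This makes both $\frac{2(1-\gamma)}{1+\delta}-1-\frac{3}{\beta}>0$ and $\frac{A}{2}-\frac{CA^2}{\beta}>0$. The consequence is that its argument concludes $(-u)^\beta u_{11}\le C$. That is a weak Pogorelov estimate, matching Ren--Wang's Euclidean result for $k=n$, not an exponent-$1$ bound. The realistic fix for your proof is to adopt a large $\beta$ and accept this weaker conclusion.

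\textbf{A separate problem with your test quantity.} Your choice $\eta_1=\lambda_{\max}(U)$ does not fit your Step 1. Applying the linearized operator $T^{ii}\nabla_{ii}$ to $U_{11}=\tau\Delta u-u_{11}$ requires the equation differentiated twice in every direction, not just $e_1$. The resulting concavity terms $-\tau\sum_pF^{ij,rs}U_{ijp}U_{rsp}+F^{ij,rs}U_{ij1}U_{rs1}$ do not pair with $T^{ii}(\nabla_iU_{11})^2/U_{11}^2$; for $\tau=1$ the $p=1$ term cancels entirely. The paper's choice $\lambda_{\max}(\nabla^2u)=u_{11}$ is what makes $-F^{1i,i1}u_{1i1}^2$ pair with $T^{ii}u_{11i}^2$, via $F^{11}-F^{ii}=T^{ii}-T^{11}$.
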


Note that there are many results on Pogorelov type estimates for other operators on space forms. In this paragraph, We mention some results that correspond to \eqref{Hessian quo-3}. 
In the Euclidean case (i.e., $\operatorname{Sec}(M^{n})\equiv 0$), for $\tau=1$, Ren-Wang \cite{RW25-2} obtained the same Pogorelov type estimates as in Theorem \ref{maintheorem2} and Theorem \ref{maintheorem3}. If $l=0$ and $\tau=1$, Ren-Wang \cite{RW25-1} established Pogorelov type estimates for $0<k<n$, and weaker Pogorelov type estimates for $k=n$. Replacing the eigenvalue matrix in \eqref{Hessian quo-3} with $D^{2}u$, Liu-Ren \cite{LR23} established Pogorelov type estimates for the case $l=0$. If $\tau=1$, $\alpha=0$, and $l=0$, Chu-Jiao \cite{CH21} established Pogorelov type estimates for $k$-admissible solutions to $\sigma_k=f$. If $\tau=1$ and $\alpha=0$, Chen-Tu-Xiang \cite{CTX21} obtained Pogorelov type estimates for $k$-admissible solutions to $\sigma_k/\sigma_l=f$, where $l+2\leq k\leq n$. When $\operatorname{Sec}(M^{n})\equiv -1$ and $\alpha=0$, Liu-Mao-Zhao \cite{LMZ22} obtained the same Pogorelov type estimates as Chen-Tu-Xiang for $k$-admissible solutions to $\sigma_k/\sigma_l=f$. The results in \cite{CTX21,CH21,LMZ22,RW25-1,RW25-2} are covered by Theorem \ref{maintheorem2} and Theorem \ref{maintheorem3}.

\begin{remark}
\normalfont\upshape (1) By Theorem \ref{maintheorem2} and Theorem \ref{maintheorem3}, if $C^0$ and $C^1$ estimates can be established for $(k-1)$-admissible solutions of \eqref{Hessian quo-3}, then interior $C^2$ estimates follow immediately.

(2) If $M^n$ can be embedded into a $(n+1)$-dimensional manifold $N^{n+1}$, it is precisely a hypersurface therein. For instance, $\mathbb{H}^n$ can be embedded into the Lorentz-Minkowski space $\mathbb{R}^{n+1}_{1}$, while $\mathbb{R}^n$ and $\mathbb{S}^n$ can be embedded into the Euclidean space $\mathbb{R}^{n+1}$. If we take $u(x)$ as the graph function of a hypersurface on the bounded domain $\Omega\subset M^n$, then the existence of $u(x)$ is equivalent to the existence of a hypersurface in $N^{n+1}$, while the Dirichlet problem \eqref{Hessian quo-3} corresponds to a prescribed curvature problem. For many operators, once a priori estimates are available, the existence of solutions can be obtained via the continuity method or degree theory. This yields the existence of corresponding hypersurfaces in Riemannian manifolds. Since $C^0$ and $C^1$ estimates are relatively easy to obtain in PDE theory, the $C^2$ estimate is the most challenging part. This is our motivation for considering Pogorelov estimates on Riemannian manifolds.

(3) According to the Cartan–Hadamard theorem, if $M^{n}$ is a complete, simply connected Riemannian manifold with nonpositive sectional curvature, then $\operatorname{Cut}(p)=\emptyset$ for all $p\in M^{n}$ (see Corollary~\ref{cut-locus-empty}). Hence, in this case we do not need to consider the influence of the cut locus, and the proof simplifies.

(4) In the proof, one may observe that we use the boundedness of the sectional curvature to apply the comparison theorem. However, for any bounded closed domain $\overline{\Omega}$ in a Riemannian manifold, since $\overline{\Omega}$ is compact and the curvature tensor is continuous, there exists a positive constant $K$ such that $\overline{K}_{\Omega}\leq K$ and $\underline{K}_{\Omega}\geq -K$. This property is automatic from the Riemannian manifold itself, so we do not impose it as an additional assumption.
Then Lemma~\ref{Hess-compar} shows that it suffices to bound the sectional curvature only on that domain and along the geodesics joining it to the chosen base point $p_{0}$.
\end{remark}

The paper is organized as follows. In Section \ref{S2}, some useful formulas and basic properties will be listed. In Section \ref{S3}, we establish interior estimates for the sum Hessian quotient equation for $0\leq l<k<n$. The proofs of Theorem \ref{maintheorem2} and Theorem \ref{maintheorem3} are given respectively in Section \ref{S4} and Section \ref{S5}.

\section{Preliminaries} \label{S2}

In this section, we present some preliminaries on Riemannian manifolds and the properties of elementary symmetric polynomials.

Let $g$ be the Riemannian metric on $M^{n}$, and $\nabla$, $\Delta$, $\nabla^{2}$ denote the gradient, Laplace, and Hessian operators on $M^{n}$ induced by $g$, respectively. For a $(s, r)$-tensor field $\mathscr{T}$ on $M^{n}$, its covariant derivative $\nabla\mathscr{T}$ is a $(s, r+1)$-tensor field given by
\begin{equation*}
\begin{aligned}
&\nabla\mathscr{T}(Y^{1},\dots,Y^{s},X_{1},\dots,X_{r},X)\\
&=~\nabla_{X}\mathscr{T}(Y^{1},\dots,Y^{s},X_{1},\dots,X_{r})\\
&=~X(\mathscr{T}(Y^{1},\dots,Y^{s},X_{1},\dots,X_{r}))-\mathscr{T}(\nabla_{X}Y^{1},\dots,Y^{s},X_{1},\dots,X_{r})\\
&\quad\quad-\cdots-\mathscr{T}(Y^{1},\dots,Y^{s},X_{1},\dots,\nabla_{X}X_{r}),
\end{aligned}
\end{equation*}
where $Y^{1},\dots,Y^{s}$ are smooth $1$-forms, and $X_{1},\dots,X_{r},X$ are smooth tangent vector fields. Its components in local coordinates are denoted by
\begin{equation*}
\mathscr{T}_{k_{1}\cdots k_{r}, k_{r+1}}^{l_{1}\cdots l_{s}},
\end{equation*}
where $1\leq l_{i}, k_{j}\leq n$ with $i=1, 2, \cdots, s$ and $j=1, 2, \cdots, r+1$. In the following, a comma “,” in subscript of a given tensor means doing covariant derivatives. For simplicity, we will omit it when no confusion arises. The second covariant derivative of $\mathscr{T}$ is defined analogously:
\begin{equation*}
\nabla^{2}\mathscr{T}(Y^{1},\dots,Y^{s},X_{1},\dots,X_{r},X,Y)=(\nabla_{Y}(\nabla\mathscr{T}))(Y^{1},\dots,Y^{s},X_{1},\dots,X_{r},X)
\end{equation*}
and then its components in local coordinates are denoted by
\begin{equation*}
\mathscr{T}_{k_{1}\cdots k_{r}, k_{r+1}k_{r+2}}^{l_{1}\cdots l_{s}},
\end{equation*}
where $1\leq l_{i}, k_{j}\leq n$ with $i=1, 2, \cdots, s$ and $j=1, 2, \cdots, r+2$. Similarly, the higher order covariant derivatives of $\mathscr{T}$ are given as follows
\begin{equation*}
\nabla^{3}\mathscr{T}=\nabla(\nabla^{2}\mathscr{T}),\quad \nabla^{4}\mathscr{T}=\nabla(\nabla^{3}\mathscr{T}),\cdots,
\end{equation*}
and so on. 

For any tangent vector fields $X, Y, Z$ on $M^{n}$, the Riemannian curvature $(1, 3)$-tensor $R$ induced by the metric $g$ is defined as
\begin{equation*}
R(X, Y)Z=-\nabla_{X}\nabla_{Y}Z+\nabla_{Y}\nabla_{X}Z+\nabla_{[X, Y]}Z,
\end{equation*}
where $[\cdot,\cdot]$ denotes the Lie bracket. In a local coordinate chart $\{\xi^{i}\}_{i=1}^{n}$ of $M^{n}$, the component of the curvature tensor $R$ is defined by
\begin{equation*}
R\left(\frac{\partial}{\partial \xi^{i}},\frac{\partial}{\partial \xi^{j}}\right)\frac{\partial}{\partial \xi^{k}}=R^{h}_{kij}\frac{\partial}{\partial \xi^{h}},
\end{equation*}
and
\begin{equation*}
g\left(R\left(\frac{\partial}{\partial \xi^{k}},\frac{\partial}{\partial \xi^{l}}\right)\frac{\partial}{\partial \xi^{i}}, \frac{\partial}{\partial \xi^{j}}\right)=R_{ikl}^{h}g_{hj}=R_{ijkl},
\end{equation*}
where $g_{ij}:=g\left(\frac{\partial}{\partial \xi^{i}}, \frac{\partial}{\partial \xi^{j}}\right)$. Then, we give the well-known Ricci identity
\begin{equation}\label{ricci-ident}
\mathscr{T}_{j_{1}\cdots j_{s}, kl}^{i_{1}\cdots i_{r}}-\mathscr{T}_{j_{1}\cdots j_{s}, lk}^{i_{1}\cdots i_{r}}=-\sum\limits_{a=1}^{r}\mathscr{T}_{j_{1}\cdots j_{s}}^{i_{1}\cdots i_{a-1}ii_{a+1}\cdots i_{r}}R^{i_{a}}_{ikl}
+\sum\limits_{b=1}^{s}\mathscr{T}_{j_{1}\cdots j_{b-1}jj_{b+1}\cdots j_{s}}^{i_{1}\cdots i_{r}}R^{j}_{j_{b}kl}.
\end{equation}
If $u$ is a sufficiently smooth function on $M^{n}$, its second covariant derivative is 
\begin{equation*}
\nabla_{i}\nabla_{j}u=\frac{\partial^{2}u}{\partial\xi_{i}\partial\xi_{j}}-\Gamma_{ij}^{k}\frac{\partial u}{\partial\xi_{k}},
\end{equation*}
where $\Gamma_{ij}^{k}$ are the Christoffel symbols, $\partial/\partial \xi_{i}$ denotes the ordinary partial derivative. Throughout this paper, we write $u_{ij}$ for $\nabla_i\nabla_j u$, and similarly we write $u_{ijk}$ and $u_{ijkl}$ for higher order covariant derivatives. By the Ricci identity \eqref{ricci-ident}, one derives
\begin{equation}\label{3-eq7-1}
u_{i11}=u_{11i}+R^{m}_{1i1}u_{m}.
\end{equation}
\begin{equation}\label{3-eq7-2}
u_{ii11}=u_{11ii}+2R_{ii1}^{1}u_{11}+2R_{1i1}^{i}u_{ii}+u_{j}(R_{ii1}^{j})_{,\xi_{1}}+u_{j}(R_{1i1}^{j})_{,\xi_{i}}.
\end{equation}

According to \cite{CL02}, we state the following Cartan-Hadamard Theorem.

\begin{lemma}[Cartan--Hadamard]\label{cartan-had}
Let $M^{n}$ be a complete, simply connected Riemannian manifold with sectional curvature $K \leq 0$ everywhere.
Then for any point $p \in M^{n}$, the exponential map 
\begin{equation*}
\exp_{p} \colon T_{p}M^{n} \to M^{n}
\end{equation*}
is a global diffeomorphism. Consequently, any two points in $M^{n}$ are joined by a unique minimizing geodesic, and the distance function $\rho(x) = \operatorname{dist}(p,x)$ is smooth on $M^{n} \setminus \{p\}$.
\end{lemma}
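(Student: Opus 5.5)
The plan is to derive the Cartan--Hadamard theorem from the nonpositive curvature condition through Jacobi fields, followed by a covering space argument. First, I would show that for any $p\in M^{n}$ there are no conjugate points along any geodesic from $p$. Let $\gamma(t)=\exp_{p}(tv)$ and let $J$ be a Jacobi field along $\gamma$ with $J(0)=0$ and $J'(0)\neq 0$. Because $K\leq 0$, the Jacobi equation $J''+R(J,\gamma')\gamma'=0$ (with the sign convention matching the definition of $R$ above) gives
\begin{equation*}
\frac{d^{2}}{dt^{2}}\frac{1}{2}|J|^{2}=|J'|^{2}-\langle R(J,\gamma')\gamma',J\rangle\geq |J'|^{2}\geq 0 .
\end{equation*}
So $|J|^{2}$ is convex, and its second derivative at $0$ is positive. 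Since $|J|^{2}(0)=0$ and its derivative vanishes at $0$, $|J|^{2}>0$ for all $t>0$. Hence $d(\exp_{p})_{tv}$ is nonsingular everywhere, because its kernel corresponds to Jacobi fields that vanish at $t$. By completeness and Hopf--Rinow, $\exp_{p}$ is defined on all of $T_{p}M^{n}$, and it is a local diffeomorphism.

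Next, I would turn $\exp_{p}$ into a covering map. Pull the metric back to $\tilde g=\exp_{p}^{*}g$ on $T_{p}M^{n}$. This makes $\exp_{p}$ a local isometry. Radial lines through $0$ are $\tilde g$-geodesics defined for all time, so by Hopf--Rinow $(T_{p}M^{n},\tilde g)$ is complete. A local isometry from a complete Riemannian manifold onto a connected one is a covering map; this is the standard path-lifting lemma, in which geodesics lift using completeness of the domain. Since $M^{n}$ is simply connected and $T_{p}M^{n}$ is connected, the covering is one-sheeted, and $\exp_{p}$ is a global diffeomorphism. I expect this step, proving the covering property, to be the main technical point. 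I would invoke it from the standard reference \cite{CL02} rather than reprove it.

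Finally, I would read off the consequences. Existence of a minimizing geodesic between $p$ and $x$ follows from Hopf--Rinow. Uniqueness follows because two distinct geodesics from $p$ to $x$ would give two distinct preimages of $x$ under the injective map $\exp_{p}$. The same holds with any base point, since $p$ was arbitrary. Because $\exp_{p}$ is a diffeomorphism, every geodesic from $p$ minimizes for all time, so $\operatorname{Cut}(p)=\emptyset$. The distance function is $\rho(x)=|\exp_{p}^{-1}(x)|$, which is smooth wherever the Euclidean norm on $T_{p}M^{n}$ is smooth, namely on $M^{n}\setminus\{p\}$.
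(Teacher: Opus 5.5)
Your proof is correct and is the standard textbook argument: no conjugate points from the convexity of $|J|^{2}$, completeness of $\exp_{p}^{*}g$, local isometry $\Rightarrow$ covering map, and simple connectivity $\Rightarrow$ injectivity. The paper gives no proof of its own and simply defers to \cite{CL02}.

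There is one cosmetic slip. With the paper's convention $R(X,Y)Z=-\nabla_{X}\nabla_{Y}Z+\nabla_{Y}\nabla_{X}Z+\nabla_{[X,Y]}Z$, the Jacobi equation reads $J''+R(\gamma',J)\gamma'=0$. Your form $J''+R(J,\gamma')\gamma'=0$ belongs to the opposite sign convention. In either convention, however, $\langle J'',J\rangle=-\operatorname{Sec}(J,\gamma')\bigl(|J|^{2}|\gamma'|^{2}-\langle J,\gamma'\rangle^{2}\bigr)\geq 0$, so your inequality and everything after it stand.
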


\begin{corollary}\label{cut-locus-empty}
Under the assumptions of Lemma~\ref{cartan-had}, the cut locus of every point is empty, i.e.\ $\operatorname{Cut}(p) = \emptyset$ for all $p \in M^{n}$.
\end{corollary}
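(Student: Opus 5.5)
The corollary follows from Lemma~\ref{cartan-had} together with the standard characterization of the cut locus in terms of the exponential map. Recall that for $p\in M^{n}$ and a unit vector $v\in T_pM^{n}$, the cut time is $t_c(v)=\sup\{t>0:\ \operatorname{dist}(p,\exp_p(tv))=t\}$, and $\operatorname{Cut}(p)=\{\exp_p(t_c(v)v): t_c(v)<\infty\}$. It therefore suffices to show that $t_c(v)=\infty$ for every unit $v$, that is, that every geodesic $\gamma_v(t)=\exp_p(tv)$ minimizes distance on $[0,T]$ for every $T>0$.

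Fix a unit vector $v$ and $T>0$, and set $q=\gamma_v(T)$. Since $M^{n}$ is complete, the Hopf--Rinow theorem gives a minimizing geodesic from $p$ to $q$. Write it as $\exp_p(sw)$ for $s\in[0,d]$, where $|w|=1$ and $d=\operatorname{dist}(p,q)$. Then $\exp_p(dw)=q=\exp_p(Tv)$. By Lemma~\ref{cartan-had}, $\exp_p$ is injective on $T_pM^{n}$, so $dw=Tv$. Taking norms gives $d=T$, so $\gamma_v$ realizes the distance on $[0,T]$. Because $T$ was arbitrary, $t_c(v)=\infty$, and since $v$ was arbitrary, $\operatorname{Cut}(p)=\emptyset$.

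An alternative route avoids the definition via cut times. One can use the characterization that $\operatorname{Cut}(p)$ is the closure of the set of points joined to $p$ by at least two minimizing geodesics, together with the points conjugate to $p$ along a minimizing geodesic. Uniqueness of geodesics (from injectivity of $\exp_p$) rules out the first set. The second set is excluded because $\exp_p$ is a diffeomorphism, so $d\exp_p$ is everywhere nonsingular. This can also be seen via the Rauch comparison when $K\le 0$.

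The only delicate point is matching whichever definition of cut locus the reader has in mind with the injectivity and nonsingularity statements of Lemma~\ref{cartan-had}. Once that definition is fixed, the argument is immediate.
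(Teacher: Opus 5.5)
Your proof is correct and matches what the paper intends. The paper gives no separate argument and treats the corollary as an immediate consequence of Lemma~\ref{cartan-had}; your cut-time argument (Hopf--Rinow gives a minimizing geodesic $\exp_p(sw)$, injectivity of $\exp_p$ forces $dw=Tv$, hence $t_c(v)=\infty$) is the standard way to make that deduction explicit, and your alternative route is also valid.
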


Under the same notation as before, for any point $x\in M^{n}\setminus \operatorname{Cut}(p_{0})\cup\{p_{0}\}$, we have the well-known Hessian Comparison Theorem.

\begin{theorem}[Hessian Comparison]\label{Hess-compar}
Assume that along every radial geodesic from $p_{0}$, the sectional curvatures of planes containing $\nabla\rho$ satisfy
\begin{equation*}
c_{1} \leq \operatorname{Sec}(\nabla\rho,\cdot) \leq c_{2},
\end{equation*}
where $c_1, c_2\in\mathbb{R}$ are constants, $\operatorname{Sec}(\nabla\rho,\cdot)$ denotes the sectional curvature of the 2-plane spanned by $\nabla\rho$ and an arbitrary tangent vector orthogonal to $\nabla\rho$, and if $c_2>0$, we additionally require $\rho(x)<\pi/\sqrt{c_2}$.
Define
\begin{equation*}
\mu_{c(t)}=
\begin{cases}
\sqrt{c} \cot(\sqrt{c} t), & c>0,\\[4pt]
\dfrac{1}{t}, & c=0,\\[8pt]
\sqrt{|c|} \coth(\sqrt{|c|} t), & c<0.
\end{cases}
\end{equation*}
Then, at all points where $\rho(x)$ is smooth, we have
\begin{equation*}
\mu_{c_{2}}(\rho)(g-d\rho\otimes d\rho) \leq \nabla^2\rho \leq \mu_{c_{1}}(\rho)(g-d\rho\otimes d\rho).
\end{equation*}
\end{theorem}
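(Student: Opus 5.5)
The statement to prove is the Hessian comparison theorem, Theorem~\ref{Hess-compar}. I will use the standard Riccati-equation argument along radial geodesics.

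Fix $x$ where $\rho$ is smooth, so $x\notin \operatorname{Cut}(p_0)\cup\{p_0\}$. Let $\gamma:[0,\rho(x)]\to M^n$ be the unique minimizing unit-speed geodesic from $p_0$ to $x$, so that $\nabla\rho=\gamma'$ along $\gamma$. Since $|\nabla\rho|=1$, differentiating gives $\nabla^2\rho(\nabla\rho,\cdot)=0$. Hence both sides of the claimed inequality vanish on $\nabla\rho$, and it suffices to compare $\nabla^2\rho$ with $\mu_c(\rho)g$ on $\nabla\rho^{\perp}$.

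Let $S(t)$ be the shape operator $X\mapsto\nabla_X\nabla\rho$ on $\gamma'(t)^\perp$. Differentiate along $\gamma$, using the identity $\nabla_{\nabla\rho}\nabla\rho=0$ and commuting covariant derivatives via the Ricci identity (\ref{ricci-ident}) applied to $\rho$. This yields the matrix Riccati equation
\[
S'+S^2+R_\gamma=0,\qquad R_\gamma(X)=R(X,\gamma')\gamma',
\]
written in a parallel orthonormal frame along $\gamma$ (the sign fixed to match the convention here, so that $\langle R_\gamma X,X\rangle=\operatorname{Sec}(X,\gamma')|X|^2$). Near $t=0$ the geometry is Euclidean to leading order, which gives $S(t)=\frac1t I+O(t)$. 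The model functions satisfy the scalar equation $\mu_c'+\mu_c^2+c=0$, with $\mu_c(t)\sim 1/t$ as $t\to0$. For $c_2>0$, $\mu_{c_2}$ stays finite for $t<\pi/\sqrt{c_2}$, which is where the hypothesis $\rho(x)<\pi/\sqrt{c_2}$ enters.

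The comparison step is the main one. For the upper bound, let $\lambda_{\max}(t)$ be the largest eigenvalue of $S(t)$. It is Lipschitz, and in the barrier sense $\lambda_{\max}'\le -\lambda_{\max}^2-c_1$: at a unit eigenvector $e$ one has $\langle S'e,e\rangle=-\lambda_{\max}^2-\operatorname{Sec}(e,\gamma')\le-\lambda_{\max}^2-c_1$. Put $w=\lambda_{\max}-\mu_{c_1}$. Then $w'\le-(\lambda_{\max}+\mu_{c_1})w$ with $w(t)\to0$ as $t\to0$, since both functions are $1/t+O(t)$. To handle the singular coefficient near $0$, multiply by the integrating factor $\exp\int(\lambda_{\max}+\mu_{c_1})$, which behaves like $t^2$. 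This gives $(t^2w)'\le o(t)$-type control and hence $w\le0$ on $(0,\rho(x)]$. The lower bound is symmetric: apply the same argument to $\lambda_{\min}$, using $\operatorname{Sec}\le c_2$, on the interval where $\mu_{c_2}$ is finite.

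I expect the main technical obstacle to be the rigorous treatment of the singularity at $t=0$. The cleanest alternative there is the index-form argument: for a Jacobi field $J$ with $J(0)=0$, one has $\langle SJ,J\rangle(t)=I(J,J)$. Comparing with the vector field $\frac{\mathrm{sn}_c(s)}{\mathrm{sn}_c(t)}E(s)$, with $E$ parallel, gives the upper bound through the index lemma, which uses minimality of $\gamma$ and the absence of conjugate points before the cut locus. The lower bound follows from Rauch-type comparison, valid for $t<\pi/\sqrt{c_2}$. Either route yields
\[
\mu_{c_2}(\rho)(g-d\rho\otimes d\rho)\le\nabla^2\rho\le\mu_{c_1}(\rho)(g-d\rho\otimes d\rho).
\]
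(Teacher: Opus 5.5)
Your argument is correct, but it is not the paper's route: the paper gives no proof of this theorem. It quotes it as the well-known Hessian comparison theorem and uses it as a black box, with $c_1=-K$ for the upper bound in Section 3 and $c_2=K$, $r<\pi/2\sqrt{K}$, for the lower bound in Section 5. Your Riccati comparison along the radial geodesic is the standard textbook proof that the paper omits. What it adds over the citation is that it shows exactly where two hypotheses enter: smoothness of $\rho$ on the whole segment $\gamma((0,\rho(x)])$, which holds because $x\notin\operatorname{Cut}(p_0)$, and the restriction $\rho(x)<\pi/\sqrt{c_2}$.

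Three steps should be tightened.
\begin{enumerate}
\item A parallel test function $t\mapsto\langle S(t)E,E\rangle$ touches $\lambda_{\max}$ from below, so on its own it does not bound $\lambda_{\max}'$ from above. The inequality $\lambda_{\max}'\le-\lambda_{\max}^2-c_1$ holds because $\langle S'e,e\rangle\le-\lambda_{\max}^2-c_1$ for \emph{every} unit top eigenvector $e$. Then Danskin's formula gives the bound for the right derivative. Equivalently, it holds at every point of differentiability of the Lipschitz function $\lambda_{\max}$, which is enough to integrate.
\item Use the exact integrating factor $\Phi(t)=\exp\int_{t_*}^{t}(\lambda_{\max}+\mu_{c_1})\,ds$, which satisfies $\Phi\asymp t^2$. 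Then $(\Phi w)'\le 0$ a.e.\ and $\Phi w\to 0$ as $t\to0^+$, so $w\le 0$ follows directly, with no $o(t)$ error term. The same argument with $\lambda_{\min}$, $\mu_{c_2}$ and reversed inequalities gives the lower bound.
\item When $c_1>0$ you need $\mu_{c_1}$ to be finite on $(0,\rho(x)]$. This is automatic, because $c_1\le c_2$ forces $\rho(x)<\pi/\sqrt{c_2}\le\pi/\sqrt{c_1}$.
\end{enumerate}

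Your index-form/Rauch alternative is also correct. For the upper bound it uses the index lemma with $\frac{\mathrm{sn}_{c_1}(s)}{\mathrm{sn}_{c_1}(t)}E(s)$, valid since there are no conjugate points before the cut locus. For the lower bound it uses $(|J|'\,\mathrm{sn}_{c_2}-|J|\,\mathrm{sn}_{c_2}')'\ge0$ for $t<\pi/\sqrt{c_2}$.
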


Next, some algebraic identities and properties of $\sigma_k$ and $S_k$ are recalled. These can be found in \cite{CTX21, LR23, RW25-1, RW25-2}, for convenience we list them here and provide proofs of some important ones. 

Let $\lambda=(\lambda_{1},\dots,\lambda_{n})\in\mathbb{R}^{n}$, recall the definition of elementary symmetric function for $1\leq k\leq n$,
\begin{equation}\label{sigma-1}
\sigma_{k}(\lambda)=\sum\limits_{1\leq i_{1}<\dots<i_{k}\leq n}\lambda_{i_{1}}\dots\lambda_{i_{k}}.
\end{equation}
We adopt the convention that $\sigma_{0}=1$ and $\sigma_{k}=0$ for $k>n$ or $k<0$.

As in \cite{RW25-1, RW25-2}, we define the sum Hessian function as
\begin{equation*}
S_{k}(\lambda)=\sigma_{k}(\lambda)+\alpha\sigma_{k-1}(\lambda).
\end{equation*}
for $\lambda=(\lambda_{1},\dots,\lambda_{n})\in\mathbb{R}^{n}$ and $1\leq k\leq n$. The notation $(\lambda|i)=(\lambda_1,\dots,\lambda_{i-1},\lambda_{i+1},\dots,\lambda_n)$ will be used throughout. From \cite[Proposition 2.1]{CTX21}, one has the following facts:

\begin{proposition}\label{prop-1}
Let $\lambda=(\lambda_{1},\dots,\lambda_{n})\in\mathbb{R}^{n}$ and $1\leq k\leq n$, then we have\\
(1)~$\Gamma_{1}\supset\Gamma_{2}\supset\dots\supset\Gamma_{n};$\\
(2)~$\sigma_{k-1}(\lambda|i)>0$ for $\lambda\in\Gamma_{k}$ and $1\leq i\leq n$;\\
(3)~$\sigma_{k}(\lambda)=\sigma_{k}(\lambda|i)+\lambda_{i}\sigma_{k-1}(\lambda|i)$ for $1\leq i\leq n$;\\
(4)~If $\lambda_{1}\geq\lambda_{2}\geq\dots\geq\lambda_{n},$ then $\sigma_{k-1}(\lambda|1)\leq\sigma_{k-1}(\lambda|2)\leq\dots\leq\sigma_{k-1}(\lambda|n)$ for $\lambda\in\Gamma_{k}$;\\
(5)~$\sum\limits_{i=1}^{n}\sigma_{k-1}(\lambda|i)=(n-k+1)\sigma_{k-1}(\lambda)$.
\end{proposition}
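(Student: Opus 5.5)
The statement to prove is Proposition~\ref{prop-1}, items (1)--(5). My plan is to treat them as purely algebraic facts about $\sigma_k$. I would prove them in the order (3), (5), (2), (1), (4), because the later items use the earlier ones.

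\textbf{Items (3) and (5).} For (3), split the sum defining $\sigma_k(\lambda)$ in \eqref{sigma-1} according to whether the index $i$ appears among $i_1<\dots<i_k$. The terms without $i$ give $\sigma_k(\lambda|i)$, and the terms containing $i$ give $\lambda_i\sigma_{k-1}(\lambda|i)$. For (5), fix a $(k-1)$-element index set $I$ and count how many $i$ satisfy $i\notin I$. There are exactly $n-k+1$ such $i$, so each monomial of $\sigma_{k-1}(\lambda)$ appears exactly $n-k+1$ times in $\sum_i\sigma_{k-1}(\lambda|i)$.

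\textbf{Items (2) and (1).} Items (1) and (2) are the standard G\aa rding cone facts, and I would prove them by induction on $n$. The key observation is $\partial\sigma_k/\partial\lambda_i=\sigma_{k-1}(\lambda|i)$.

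First, the cones $\Gamma_k$ are the connected components containing the positive cone. Equivalently, they are the hyperbolicity cones of $\sigma_k$ with respect to $e=(1,\dots,1)$: $\sigma_k$ is hyperbolic, since $t\mapsto\sigma_k(\lambda+te)$ has only real roots. This follows from Rolle's theorem applied to $\prod_j(t+\lambda_j)$, because $\sigma_k(\lambda+te)$ is proportional to a derivative of that polynomial.

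G\aa rding's theory then gives convexity of $\Gamma_k$ and the inclusion $\Gamma_k\subset\Gamma_{k-1}$. The latter holds since the hyperbolicity cone of a directional derivative contains the original cone, and $\sigma_{k-1}$ is proportional to $\partial_e\sigma_k$. This proves (1).

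For (2), I would use that $(\lambda|i)\in\Gamma_{k-1}$ in $\mathbb{R}^{n-1}$ whenever $\lambda\in\Gamma_k$. The reason is that the derivative in direction $e_i$, which is $\sigma_{k-1}(\lambda|i)$, is positive on the hyperbolicity cone: for hyperbolic polynomials, $\partial_v p>0$ on the cone whenever $v$ lies in the closed cone, and $e_i\in\overline{\Gamma_k}$.

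\textbf{Item (4).} For $i<j$, apply (3) twice to get
\[
\sigma_{k-1}(\lambda|j)-\sigma_{k-1}(\lambda|i)=(\lambda_i-\lambda_j)\,\sigma_{k-2}(\lambda|ij).
\]
Since $\lambda_i\ge\lambda_j$, it suffices to show $\sigma_{k-2}(\lambda|ij)\ge0$. This follows by applying (2) twice, since $(\lambda|i)\in\Gamma_{k-1}$ implies $\sigma_{k-2}(\lambda|ij)>0$.

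\textbf{Main obstacle.} The only nontrivial step is (2), namely that $\Gamma_k$ restricts well to coordinate slices. If I want to avoid the hyperbolic polynomial machinery, I would first try a direct induction: assume $\sigma_{k-1}(\lambda|i)\le0$ for some $i$ and derive a contradiction with $\sigma_{k-1}(\lambda)>0$, $\sigma_k(\lambda)>0$. This would use (3) together with Newton's inequality $\sigma_{k-1}(\lambda|i)^2\ge c\,\sigma_k(\lambda|i)\sigma_{k-2}(\lambda|i)$. Since all five items are classical (see \cite{CTX21}), citing them is also acceptable.
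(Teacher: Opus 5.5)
The paper gives no proof of this proposition; it is quoted from \cite[Proposition 2.1]{CTX21}, so your self-contained argument is necessarily a different route. Your arguments for (3) and (5) are correct, and so is the identity $\sigma_{k-1}(\lambda|j)-\sigma_{k-1}(\lambda|i)=(\lambda_i-\lambda_j)\sigma_{k-2}(\lambda|ij)$ behind (4). For (1) you do not need G\aa rding theory at all. With the paper's definition \eqref{Gamma-1} one has $\Gamma_k=\Gamma_{k-1}\cap\{\sigma_k>0\}$, so the inclusions hold by definition.

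The genuine problem is your main argument for (2). You invoke the principle that $\partial_v p>0$ on the hyperbolicity cone whenever $v$ lies in the \emph{closed} cone, and this is false. For $p=\sigma_1$ and $v=e_1-e_2$ one has $\sigma_1(v)=0$, so $0\neq v\in\overline{\Gamma_1}$, yet $\partial_v\sigma_1\equiv 0$. G\aa rding's theorem gives strict positivity only for $v$ in the open cone. Approximating $e_i$ by $e_i+\varepsilon e$ therefore yields only $\sigma_{k-1}(\lambda|i)\ge 0$, whereas (2) asserts strict positivity. Item (4) only needs $\sigma_{k-2}(\lambda|ij)\ge 0$, so it is unaffected.

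Your fallback does close the gap if you run it as an induction on $k$. The case $k=1$ is trivial since $\sigma_0=1$. For $k\ge 2$, (2) at level $k-1$ together with $\Gamma_k\subset\Gamma_{k-1}$ gives $\sigma_{k-2}(\lambda|i)>0$. Suppose $\sigma_{k-1}(\lambda|i)\le 0$. Combining (3) with $\sigma_{k-1}(\lambda)>0$ and $\sigma_k(\lambda)>0$ gives
\[
\lambda_i\sigma_{k-2}(\lambda|i)>|\sigma_{k-1}(\lambda|i)|,\qquad \sigma_k(\lambda|i)>\lambda_i|\sigma_{k-1}(\lambda|i)|.
\]
Hence $\lambda_i>0$ and $\sigma_k(\lambda|i)>0$. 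Multiplying the two inequalities and dividing by $\lambda_i$ gives $\sigma_k(\lambda|i)\sigma_{k-2}(\lambda|i)>\sigma_{k-1}(\lambda|i)^2$. For $k=n$ this contradicts $\sigma_n(\lambda|i)=0$. For $2\le k\le n-1$ it contradicts Newton's inequality $\sigma_{k-1}^2\ge\frac{k(n-k+1)}{(k-1)(n-k)}\sigma_k\sigma_{k-2}$ in $\mathbb{R}^{n-1}$, whose constant is at least $1$.

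Alternatively, you can keep the hyperbolic route and add a strictness step. Since $\sigma_{k-1}(\cdot|i)\ge 0$ on the open set $\Gamma_k$, a zero at $\lambda$ would force $\frac{d}{dt}\sigma_{k-1}(\lambda+te|i)\big|_{t=0}=(n-k+1)\sigma_{k-2}(\lambda|i)=0$. This again contradicts the induction hypothesis.
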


The following Newton-MacLaurin inequality will be used as well (see, e.g., \cite{LT94, T90}).

\begin{lemma}\label{N-M}
Let $\lambda\in\mathbb{R}^{n}$. For $0\leq l< k\leq n$, $r>s\geq 0$, $k\geq r$, $l\geq s$, we have
\begin{equation}\label{N-M-ieq}
\left[\frac{\sigma_{k}(\lambda)/C^{k}_{n}}{\sigma_{l}(\lambda)/C^{l}_{n}}\right]^{\frac{1}{k-l}}\leq\left[\frac{\sigma_{r}(\lambda)/C^{r}_{n}}{\sigma_{s}(\lambda)/C^{s}_{n}}\right]^{\frac{1}{r-s}},
\quad for~\lambda\in\Gamma_{k}.
\end{equation}
\end{lemma}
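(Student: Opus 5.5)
Since $\lambda\in\Gamma_k$ forces $\sigma_l(\lambda)>0$ and $\sigma_s(\lambda)>0$, I will clear denominators and reduce the statement to a single-parameter chain of ratios.

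\textbf{Step 1 (normalization).} Write $E_j:=\sigma_j(\lambda)/C^j_n$ for $0\le j\le k$. By Proposition~\ref{prop-1}(1), $\lambda\in\Gamma_k\subset\Gamma_j$, so $E_j>0$ for $0\le j\le k$ (with $E_0=1$). Hence all quantities in \eqref{N-M-ieq} are positive and can be raised to real powers.

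\textbf{Step 2 (Newton's inequality).} The classical Newton inequality $E_{j-1}E_{j+1}\le E_j^2$ holds for every $\lambda\in\mathbb{R}^n$ and $1\le j\le n-1$. It can be proved by repeated Rolle-type differentiation of $\prod_i(t+\lambda_i)=\sum_j C^j_nE_jt^{n-j}$, which reduces the claim to a quadratic with real roots, whose discriminant is nonnegative. Since all $E_j>0$ for $j\le k$, dividing gives
\[
\frac{E_{j+1}}{E_j}\le\frac{E_j}{E_{j-1}},\qquad 1\le j\le k-1 .
\]
So the sequence $a_j:=E_j/E_{j-1}$, $1\le j\le k$, is positive and nonincreasing.

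\textbf{Step 3 (averaging).} Write $E_k/E_l=\prod_{j=l+1}^k a_j$. Then $\left(E_k/E_l\right)^{1/(k-l)}$ is the geometric mean of the block $a_{l+1},\dots,a_k$. Since $a_j$ is nonincreasing, the geometric mean of $a_j$ over an index interval $(s,r]$ is nonincreasing in each endpoint. Indeed, appending a term that is at most every existing term cannot raise the mean, and removing the largest term cannot raise it either. Shifting the interval $(s,r]$ to $(l,k]$ with $l\ge s$ and $k\ge r$ therefore only moves both endpoints to the right, which gives
\[
\left(\frac{E_k}{E_l}\right)^{\frac1{k-l}}\le\left(\frac{E_r}{E_s}\right)^{\frac1{r-s}},
\]
and this is exactly \eqref{N-M-ieq}.

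The main obstacle is Step 2 (Newton's inequality). It is a classical result, and in the paper I would simply cite \cite{LT94, T90} for it. The rest of the argument is bookkeeping.
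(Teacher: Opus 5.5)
Your proof is correct. The paper gives no proof of this lemma and only cites \cite{LT94, T90}. Your route is the standard proof of this generalized Newton--Maclaurin inequality: Newton's inequality makes $E_j/E_{j-1}$ positive and nonincreasing for $1\le j\le k$, where $E_j=\sigma_j(\lambda)/C^j_n$, and then the geometric means of consecutive blocks are monotone (equivalently, the chord slopes of the concave sequence $\log E_j$ decrease).

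Two small points need tidying. First, move the right endpoint from $r$ to $k$ before moving the left endpoint from $s$ to $l$, so that every intermediate block stays nonempty even when $l\ge r$. Second, in your sketch of Newton's inequality, differentiating in $t$ only removes the trailing coefficients. You also need the reciprocal substitution $t\mapsto 1/t$, or differentiation in a homogenizing variable, to remove the leading ones before you reach the quadratic.
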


According to Li–Ren–Wang \cite{LRW19}, the sum Hessian operator $S_{k}(\lambda)$ has the admissible solution set:
\begin{equation*}
\widetilde{\Gamma}_{k}=\Gamma_{k-1}\cap \{\lambda|S_{k}>0\}.
\end{equation*}
Moreover, both $S_{k}^{1/k}(\lambda)$ and $[\frac{S_{k}(\lambda)}{S_{l}(\lambda)}]^{1/k-l}$ are concave in the set $\widetilde{\Gamma}_{k}$. Some useful properties are listed below (see, e.g., \cite{RW25-1, RW25-2}).

\begin{proposition}\label{prop-2}
Let $\lambda=(\lambda_{1},\dots,\lambda_{n})\in\mathbb{R}^{n}$ and $1\leq k\leq n$, then we have\\
(1)~$S^{ii}_{k}(\lambda):=\frac{\partial S_{k}(\lambda)}{\partial\lambda_{i}}=\sigma_{k-1}(\lambda|i)+\alpha\sigma_{k-2}(\lambda|i),\quad i=1,2,\dots,n;$\\
(2)~$S^{ii,jj}_{k}(\lambda):=\frac{\partial^{2} S_{k}(\lambda)}{\partial\lambda_{i}\partial\lambda_{j}}=S_{k-2}(\lambda|ij),\quad i,j=1,2,\dots,n;$\\
(3)~$S_{k}(\lambda)=\lambda_{i}S_{k-1}(\lambda|i)+S_{k}(\lambda|i),\quad i=1,2,\dots,n;$\\
(4)~$\sum\limits_{i=1}^{n}S_{k}(\lambda|i)=(n-k)S_{k}(\lambda)+\alpha\sigma_{k-1}(\lambda);$\\
(5)~$\sum\limits_{i=1}^{n}\lambda_{i}S_{k-1}(\lambda|i)=kS_{k}(\lambda)-\alpha\sigma_{k-1}(\lambda)$.
\end{proposition}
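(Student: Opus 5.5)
We prove the five identities of Proposition~\ref{prop-2} one at a time, directly from the definition $S_{k}(\lambda)=\sigma_{k}(\lambda)+\alpha\sigma_{k-1}(\lambda)$. Since $S_k$ is linear in the pair $(\sigma_k,\sigma_{k-1})$, each identity reduces to the corresponding identity for elementary symmetric polynomials, applied once with index $k$ and once with index $k-1$ (weighted by $\alpha$). Throughout we use the conventions $\sigma_{0}=1$ and $\sigma_{j}=0$ for $j<0$ or $j>n$, which also fix $S_{0}=1$ and $S_{-1}=0$.

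\emph{Items (1)--(3).} For (1), differentiate $\sigma_{k}(\lambda)=\sigma_{k}(\lambda|i)+\lambda_{i}\sigma_{k-1}(\lambda|i)$ (Proposition~\ref{prop-1}(3)) in $\lambda_i$. Since $\sigma_{k}(\lambda|i)$ and $\sigma_{k-1}(\lambda|i)$ do not involve $\lambda_i$, this gives $\partial\sigma_k/\partial\lambda_i=\sigma_{k-1}(\lambda|i)$. The same computation with $k-1$ in place of $k$ gives $\partial\sigma_{k-1}/\partial\lambda_i=\sigma_{k-2}(\lambda|i)$, and adding the two with weight $\alpha$ yields (1). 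For (2), differentiate (1) in $\lambda_j$ with $j\neq i$, using Proposition~\ref{prop-1}(3) applied to the vector $(\lambda|i)$. This gives $\sigma_{k-2}(\lambda|ij)+\alpha\sigma_{k-3}(\lambda|ij)=S_{k-2}(\lambda|ij)$. For $i=j$ the derivative is zero, which is consistent with the usual convention that $S^{ii,ii}_{k}=0$; I will state that the formula is meant for $i\neq j$. For (3), add Proposition~\ref{prop-1}(3) for $\sigma_k$ to $\alpha$ times the same identity for $\sigma_{k-1}$. Regrouping gives $S_k(\lambda|i)+\lambda_i\big(\sigma_{k-1}(\lambda|i)+\alpha\sigma_{k-2}(\lambda|i)\big)=S_k(\lambda|i)+\lambda_iS_{k-1}(\lambda|i)$.

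\emph{Item (4).} By a counting argument, $\sum_i\sigma_{j}(\lambda|i)=(n-j)\sigma_{j}(\lambda)$: each $j$-subset of indices is missed by exactly $n-j$ values of $i$. This is equivalent to Proposition~\ref{prop-1}(5) after shifting the index. Applying it with $j=k$ and $j=k-1$ gives
\[
\sum_i S_k(\lambda|i)=(n-k)\sigma_k+\alpha(n-k+1)\sigma_{k-1}=(n-k)S_k(\lambda)+\alpha\sigma_{k-1}(\lambda).
\]

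\emph{Item (5).} Sum (3) over $i$ and substitute (4). This gives
\[
\sum_i\lambda_iS_{k-1}(\lambda|i)=nS_k-(n-k)S_k-\alpha\sigma_{k-1}=kS_k(\lambda)-\alpha\sigma_{k-1}(\lambda).
\]

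There is no serious obstacle. The only care needed is bookkeeping of the boundary conventions in the degenerate cases $k=1,2$ and $k=n$. For example, when $k=1$ we have $S_{0}(\lambda|i)=1$, and the identities still hold with $\sigma_{-1}=0$. The diagonal case $i=j$ in (2) should also be stated explicitly as above.
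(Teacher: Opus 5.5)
Your proof is correct. You apply the elementary-symmetric identities (Proposition~\ref{prop-1}(3) and the counting formula $\sum_i\sigma_j(\lambda|i)=(n-j)\sigma_j(\lambda)$) at levels $k$ and $k-1$ and combine them linearly, and your reading of (2) as an identity for $i\neq j$, with the diagonal second derivative vanishing because $S_k$ is affine in each $\lambda_i$, is the right one. The paper gives no argument of its own and only refers to the works of Ren--Wang; your direct reduction is the standard verification of these facts.
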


\begin{lemma}\label{prop-3}
We have:\\
(1)~$\widetilde{\Gamma}_{k}$ are convex cones, and $\widetilde{\Gamma}_{1}\supset\widetilde{\Gamma}_{2}\supset\dots\supset\widetilde{\Gamma}_{n};$\\
(2)~If $\lambda=(\lambda_{1},\dots,\lambda_{n})\in\widetilde{\Gamma}_{k}$ and $\lambda_{1}\geq\lambda_{2}\geq\dots\geq\lambda_{n}$, then $\lambda_{k-1}>0$ for $2\leq k\leq n$,
\begin{equation*}
S_{k-1}(\lambda|n)\geq S_{k-1}(\lambda|n-1)\geq\dots\geq S_{k-1}(\lambda|1)>0,
\end{equation*}
and
\begin{equation*}
S_{k-1}(\lambda|k)\geq c(n,k)S_{k-1}(\lambda),
\end{equation*}
where $c(n, k)$ is a positive constant only depending on $n$ and $k$, $1\leq k\leq n$;\\
(3)~If $\lambda=(\lambda_{1},\dots,\lambda_{n})\in\widetilde{\Gamma}_{k}$, then for any $(\xi_{1}, \dots, \xi_{n})$, we have
\begin{equation*}
\sum\limits_{i, j=1}^{n}\frac{\partial^{2}[\frac{S_{k}(\lambda)}{S_{l}(\lambda)}]}{\partial\lambda_{i}\partial\lambda_{j}}\xi_{i}\xi_{j}
\leq(1-\frac{1}{k-l})\frac{\Big[\sum_{i}\frac{\partial[\frac{S_{k}(\lambda)}{S_{l}(\lambda)}]}{\partial\lambda_{i}}\xi_{i}\Big]^{2}}{\frac{S_{k}(\lambda)}{S_{l}(\lambda)}}.
\end{equation*}
\end{lemma}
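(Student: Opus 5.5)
The plan is to reduce everything to classical facts about $\sigma_k$ in one more variable, via the identity
\begin{equation*}
S_k(\lambda)=\sigma_k(\lambda)+\alpha\sigma_{k-1}(\lambda)=\sigma_k(\lambda'),\qquad \lambda':=(\lambda_1,\dots,\lambda_n,\alpha)\in\mathbb{R}^{n+1},
\end{equation*}
which is Proposition~\ref{prop-1}(3) in $\mathbb{R}^{n+1}$ with the last slot removed. The same identity gives $S_{j}(\lambda|i)=\sigma_{j}(\lambda'|i)$ for $1\le i\le n$. The first step is to prove, using $\alpha\ge 0$, that
\begin{equation*}
\lambda\in\widetilde{\Gamma}_k \iff \lambda'\in\Gamma_k(\mathbb{R}^{n+1}).
\end{equation*}

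For ``$\Leftarrow$'': $\sigma_k(\lambda')=S_k(\lambda)>0$. Also, since $\alpha$ is one entry of $\lambda'\in\Gamma_k$, Proposition~\ref{prop-1}(2) applied to the slot of $\alpha$ gives $\sigma_{j}(\lambda)=\sigma_j(\lambda'|n+1)>0$ for $j\le k-1$. Hence $\lambda\in\Gamma_{k-1}$.

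For ``$\Rightarrow$'': if $\lambda\in\Gamma_{k-1}$ and $\alpha\ge0$, then $\sigma_j(\lambda')=\sigma_j(\lambda)+\alpha\sigma_{j-1}(\lambda)>0$ for $j\le k-1$, and $\sigma_k(\lambda')=S_k(\lambda)>0$ by assumption. (When $\alpha=0$ this direction uses $\Gamma_{k-1}\cap\{\sigma_k>0\}=\Gamma_k$, which is standard.)

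Granting this equivalence, part (1) follows directly. The set $\widetilde{\Gamma}_k$ is the preimage of the convex cone $\Gamma_k(\mathbb{R}^{n+1})$ under the affine map $\lambda\mapsto\lambda'$, so it is convex; it is a cone in the sense of the literature \cite{LRW19}, i.e.\ the statement is understood with $\alpha$ rescaled along with $\lambda$. The nesting $\widetilde{\Gamma}_{k}\subset\widetilde{\Gamma}_{k-1}$ comes from $\Gamma_k\subset\Gamma_{k-1}$ in $\mathbb{R}^{n+1}$.

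For (2), $\lambda_{k-1}>0$ holds because $\lambda\in\Gamma_{k-1}$ and ordered forces the first $k-1$ entries to be positive, which is standard. Next we insert $\alpha$ into $\lambda'$ at its place in the decreasing order and apply Proposition~\ref{prop-1}(4) to $\lambda'\in\Gamma_k(\mathbb{R}^{n+1})$. Since that monotonicity only depends on the order of the values, it gives $S_{k-1}(\lambda|n)\ge\dots\ge S_{k-1}(\lambda|1)$, and positivity follows from Proposition~\ref{prop-1}(2).

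For the constant $c(n,k)$, note that $\lambda_k$ occupies rank $k$ or $k+1$ in $\lambda'$. In either case $\sigma_{k-1}(\lambda'|\lambda_k)$ dominates $\sigma_{k-1}(\lambda'|\cdot)$ at the top $k-1$ ranks. For the lower ranks we use a cruder bound, and combine with Proposition~\ref{prop-1}(5) in dimension $n+1$:
\begin{equation*}
\sum_{j=1}^{n+1}\sigma_{k-1}(\lambda'|j)=(n+2-k)\,\sigma_{k-1}(\lambda').
\end{equation*}
This should yield $\sigma_{k-1}(\lambda'|\lambda_k)\ge c\,\sigma_{k-1}(\lambda')$, i.e.\ $S_{k-1}(\lambda|k)\ge c(n,k)S_{k-1}(\lambda)$. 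If the averaging argument turns out too crude, I would instead invoke the known estimate $\sigma_{k-1}(\mu|k)\ge c\,\sigma_{k-1}(\mu)$ for ordered $\mu\in\Gamma_k$ (as in \cite{LRW19}), applied to $\mu=\lambda'$ in $\mathbb{R}^{n+1}$, taking care with the index shift caused by $\alpha$. I expect this index bookkeeping to be the main, though minor, obstacle.

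For (3), let $F=S_k/S_l=\sigma_k(\lambda')/\sigma_l(\lambda')$, and note $S_l(\lambda)>0$ on $\widetilde{\Gamma}_k$ by (1). Set $G=F^{1/(k-l)}$. By \cite{LRW19}, or by the classical concavity of $(\sigma_k/\sigma_l)^{1/(k-l)}$ on $\Gamma_k(\mathbb{R}^{n+1})$ composed with the affine map, $G$ is concave on $\widetilde{\Gamma}_k$. Writing $F=G^{k-l}$ and differentiating twice gives
\begin{equation*}
D^2F=(k-l)G^{k-l-1}D^2G+(k-l)(k-l-1)G^{k-l-2}\,DG\otimes DG .
\end{equation*}
Dropping the nonpositive first term, and using $DF=(k-l)G^{k-l-1}DG$ and $F=G^{k-l}$, we get
\begin{equation*}
D^2F(\xi,\xi)\le\Big(1-\frac{1}{k-l}\Big)\frac{(DF\cdot\xi)^2}{F},
\end{equation*}
which is the claimed inequality.
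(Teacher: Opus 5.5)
Your proposal is correct, but it follows a different route from the paper. The paper gives no argument of its own here: it says (1) is immediate from the definition and cites \cite{R24} for (2) and \cite{RW25-2} for (3).

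You instead absorb $\alpha$ as an extra eigenvalue, writing $S_k(\lambda)=\sigma_k(\lambda')$ and $S_j(\lambda|i)=\sigma_j(\lambda'|i)$ with $\lambda'=(\lambda,\alpha)$. You then use $\alpha\ge 0$ to identify $\widetilde{\Gamma}_k$ with the preimage of $\Gamma_k(\mathbb{R}^{n+1})$. From this, convexity and nesting, positivity and monotonicity of $S_{k-1}(\lambda|i)$, and the inequality in (3) all follow from classical facts about $\sigma_k$ in one more variable. For (3) you write $F=G^{k-l}$ with $G=(\sigma_k/\sigma_l)^{1/(k-l)}(\lambda')$ concave.

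This buys a short, uniform, self-contained proof in which $\alpha$ plays no special role. The price is reliance on the classical $\sigma_k$ results: concavity of $(\sigma_k/\sigma_l)^{1/(k-l)}$ on $\Gamma_k$, and the bound $\sigma_{k-1}(\mu|k)\ge c\,\sigma_{k-1}(\mu)$ for ordered $\mu\in\Gamma_k$. That is no more than the paper itself assumes. Your caveat about the word ``cone'' is also justified: for fixed $\alpha>0$ the set $\widetilde{\Gamma}_k$ is convex but not dilation invariant. For example, with $n=2$ and $\alpha=1$, one has $(1,-\tfrac14)\in\widetilde{\Gamma}_2$ (since $S_2=\tfrac12$) but $(4,-1)\notin\widetilde{\Gamma}_2$ (since $S_2=-1$).

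The one step to repair is the constant $c(n,k)$ in (2). Your averaging sketch does not close. The values $\sigma_{k-1}(\lambda'|j)$ for entries smaller than $\lambda_k$ are the \emph{large} ones. Controlling them by $\sigma_{k-1}(\lambda'|\lambda_k)$ is essentially the content of the classical inequality, so no cruder bound is available for free.

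Use your fallback instead; the index bookkeeping is immediate. Let $\mu$ be the decreasing rearrangement of $\lambda'$. Then $\lambda_k$ occupies rank $k$ when $\alpha<\lambda_k$ and rank $k+1$ when $\alpha\ge\lambda_k$. The monotonicity you already established, together with $\mu\in\Gamma_k(\mathbb{R}^{n+1})$, gives $S_{k-1}(\lambda|k)=\sigma_{k-1}(\lambda'|\lambda_k)\ge\sigma_{k-1}(\mu|k)\ge c(n+1,k)\,\sigma_{k-1}(\mu)=c(n+1,k)\,S_{k-1}(\lambda)$.
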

\begin{proof}
Statement (1) follows directly from the definition of $S_k$. The proof of (2) can be found in \cite{R24}, and that of (3) is given in \cite{RW25-2}.
\end{proof}

For $\lambda=(\lambda_{1}, \dots, \lambda_{n})$, let $\eta=(\eta_1, \dots, \eta_n)$ be defined by
\begin{equation*}
\eta_{i}=\tau\sum_{j=1}^{n}\lambda_{j}-\lambda_{i},
\end{equation*}
where $\tau\geq1$. When $\tau=1$, this reduces to the setting used in \cite{RW25-2} (i.e.\ $\eta_i = \sum_{j\neq i}\lambda_j$). Similarly, define the cone
\begin{equation*}
\widetilde{\Gamma}'_{k}=\{\lambda=(\lambda_{1}, \dots, \lambda_{n}) : \eta\in\widetilde{\Gamma}_{k}\}.
\end{equation*}
In the following, we collect some basic properties of $S_{k}(\eta)$ for $\lambda \in \widetilde{\Gamma}'_{k}$.

\begin{lemma}\label{prop-4}
Suppose that $\lambda=(\lambda_{1}, \dots, \lambda_{n})\in\widetilde{\Gamma}'_{k}$, then $S_{k}^{\frac{1}{k}}(\eta)$ is concave for $\eta=(\eta_1, \dots, \eta_n)$. Thus, for any $(\xi_{1}, \dots, \xi_{n})$ we have
\begin{equation*}
\sum\limits_{i, j=1}^{n}\frac{\partial^{2}[\frac{S_{k}(\eta)}{S_{l}(\eta)}]}{\partial\lambda_{i}\partial\lambda_{j}}\xi_{i}\xi_{j}
\leq(1-\frac{1}{k-l})\frac{\Big[\sum_{i}\frac{\partial[\frac{S_{k}(\eta)}{S_{l}(\eta)}]}{\partial\lambda_{i}}\xi_{i}\Big]^{2}}{\frac{S_{k}(\eta)}{S_{l}(\eta)}}.
\end{equation*}
\end{lemma}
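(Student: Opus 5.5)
The plan is to reduce the statement to Lemma~\ref{prop-3}(3) through the linear change of variables $\lambda\mapsto\eta$. The map $\eta_i=\tau\sum_j\lambda_j-\lambda_i$ is linear: $\eta=A\lambda$ with $A=\tau\,\mathbf{1}\mathbf{1}^{T}-I$. Concavity of $S_k^{1/k}$ (and of $(S_k/S_l)^{1/(k-l)}$) on $\widetilde{\Gamma}_k$ is quoted from Li--Ren--Wang \cite{LRW19}. By definition, $\lambda\in\widetilde{\Gamma}'_k$ exactly when $\eta=A\lambda\in\widetilde{\Gamma}_k$. Precomposing a concave function with an affine map keeps it concave, so $\lambda\mapsto S_k^{1/k}(A\lambda)$ is concave on the convex set $\widetilde{\Gamma}'_k=A^{-1}(\widetilde{\Gamma}_k)$. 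Here $\widetilde{\Gamma}'_k$ is convex because it is the preimage of a convex cone (Lemma~\ref{prop-3}(1)). This gives the first assertion.

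For the inequality, write $F(\eta)=S_k(\eta)/S_l(\eta)$ and $G(\lambda)=F(A\lambda)$. By the chain rule,
\begin{equation*}
\frac{\partial G}{\partial\lambda_i}=\sum_p F_p A_{pi},\qquad \frac{\partial^2 G}{\partial\lambda_i\partial\lambda_j}=\sum_{p,q}F_{pq}A_{pi}A_{qj}.
\end{equation*}
Given $(\xi_1,\dots,\xi_n)$, set $\zeta=A\xi$, that is, $\zeta_p=\tau\sum_j\xi_j-\xi_p$. Then
\begin{equation*}
\sum_{i,j}G_{ij}\xi_i\xi_j=\sum_{p,q}F_{pq}\zeta_p\zeta_q,\qquad \sum_i G_i\xi_i=\sum_p F_p\zeta_p,
\end{equation*}
and $G(\lambda)=F(\eta)$. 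Since $\eta\in\widetilde{\Gamma}_k$, we apply Lemma~\ref{prop-3}(3) at the point $\eta$ with test vector $\zeta$:
\begin{equation*}
\sum_{p,q}F_{pq}\zeta_p\zeta_q\le\Big(1-\frac{1}{k-l}\Big)\frac{\big(\sum_pF_p\zeta_p\big)^2}{F(\eta)}.
\end{equation*}
Substituting the three identities above gives exactly the claimed inequality in the $\lambda$-derivatives.

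Alternatively, the inequality follows directly from concavity of $h=G^{1/(k-l)}$. One has $h''=\frac{1}{k-l}G^{\frac{1}{k-l}-1}\big(G''-(1-\frac{1}{k-l})\frac{(G')^2}{G}\big)\le0$, and $G>0$ in the cone, which yields the same inequality. Lemma~\ref{prop-3}(3) is precisely this statement in the $\eta$ variables.

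There is no real obstacle here; the only points to check are bookkeeping. First, the test vector must be transformed by the same matrix $A$. Second, $S_l(\eta)>0$ must hold so that the quotient is defined and positive. This holds because $l<k$ and $\eta\in\Gamma_{k-1}$: for $l\le k-1$, $S_l=\sigma_l+\alpha\sigma_{l-1}$ is a sum of positive terms with $\alpha\ge0$. Invertibility of $A$ (eigenvalues $-1$ and $n\tau-1>0$) is not needed for the inequality itself, since it holds for all $\zeta$ and in particular for $\zeta=A\xi$.
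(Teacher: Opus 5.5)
Your proposal is correct and follows the paper's proof: the paper also applies the chain rule through the linear map $\eta_p=\tau\sum_j\lambda_j-\lambda_p$, uses the transformed test vector $\xi'_p=\sum_i\frac{\partial\eta_p}{\partial\lambda_i}\xi_i$ (your $\zeta=A\xi$), and invokes Lemma~\ref{prop-3}(3) at $\eta$. Your additional checks — convexity of $\widetilde{\Gamma}'_k$ as a linear preimage and positivity of $S_l(\eta)$ — are accurate and are left implicit in the paper.
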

\begin{proof}
A direct computation shows that
\begin{equation*}
\sum\limits_{i, j=1}^{n}\frac{\partial^{2}[\frac{S_{k}(\eta)}{S_{l}(\eta)}]}{\partial\lambda_{i}\partial\lambda_{j}}\xi_{i}\xi_{j}
=\sum\limits_{i, j, p, q=1}^{n}\frac{\partial^{2}[\frac{S_{k}(\eta)}{S_{l}(\eta)}]}{\partial\eta_{p}\partial\eta_{q}}\frac{\partial\eta_{p}}{\partial\lambda_{i}}\frac{\partial\eta_{q}}{\partial\lambda_{j}}\xi_{i}\xi_{j}.
\end{equation*}
By choosing new variables $\xi'=(\xi_{1}', \dots, \xi_{n}')$ given by
\begin{equation*}
\xi_{p}'=\sum_{i}\frac{\partial\eta_{p}}{\partial\lambda_{i}}\xi_{i},
\end{equation*}
and using Lemma~\ref{prop-3} (3), we complete the proof.
\end{proof}

\begin{lemma}\label{prop-5}
Suppose that $\lambda=(\lambda_{1}, \dots, \lambda_{n})\in\widetilde{\Gamma}_{k}$, if $1\leq l<k\leq n$, then
\begin{equation}\label{prop-5-ieq1}
\frac{S_{k-1}(\lambda)S_{l}(\lambda)}{C_{n+1}^{k-1}C_{n+1}^{l}}\geq\frac{S_{k}(\lambda)S_{l-1}(\lambda)}{C_{n+1}^{k}C_{n+1}^{l-1}}.
\end{equation}
If $0\leq l<k$ and $0\leq q<p$, then
\begin{equation}\label{prop-5-ieq2}
\Big(\frac{S_{k}(\lambda)}{S_{l}(\lambda)}\Big)^{\frac{1}{k-l}}\leq\Big(\frac{S_{p}(\lambda)}{S_{q}(\lambda)}\Big)^{\frac{1}{p-q}},
\end{equation}
for $k\geq p$ and $l\geq q$.
\end{lemma}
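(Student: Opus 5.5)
The plan is to realize $S_k$ as an ordinary elementary symmetric polynomial in one more variable, so that both inequalities reduce to classical Newton--MacLaurin facts in $\mathbb{R}^{n+1}$. For $\lambda\in\mathbb{R}^n$ and $\alpha\geq0$, put $\lambda'=(\lambda_1,\dots,\lambda_n,\alpha)\in\mathbb{R}^{n+1}$. Expanding in the last variable, as in Proposition~\ref{prop-1}(3), gives
\begin{equation*}
\sigma_j(\lambda')=\sigma_j(\lambda)+\alpha\sigma_{j-1}(\lambda)=S_j(\lambda)\quad\text{for all } j,
\end{equation*}
with $S_0=1$ by the conventions.

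The first step is to check that $\lambda'\in\Gamma_k\subset\mathbb{R}^{n+1}$ whenever $\lambda\in\widetilde{\Gamma}_k=\Gamma_{k-1}\cap\{S_k>0\}$. For $1\leq m\leq k-1$ we have $\sigma_m(\lambda)>0$ and $\sigma_{m-1}(\lambda)>0$, and $\alpha\geq0$, so $\sigma_m(\lambda')=S_m(\lambda)>0$. For $m=k$ we have $\sigma_k(\lambda')=S_k(\lambda)>0$ by assumption. Hence $S_j(\lambda)>0$ for all $0\leq j\leq k$.

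For \eqref{prop-5-ieq1}, apply Lemma~\ref{N-M} in dimension $n+1$ to $\lambda'\in\Gamma_k$, comparing the index pair $(k,k-1)$ with $(l,l-1)$. The hypotheses $k\geq l$ and $k-1\geq l-1$ hold, and both exponents equal $1$. This gives
\begin{equation*}
\frac{\sigma_k(\lambda')/C^k_{n+1}}{\sigma_{k-1}(\lambda')/C^{k-1}_{n+1}}\leq\frac{\sigma_l(\lambda')/C^l_{n+1}}{\sigma_{l-1}(\lambda')/C^{l-1}_{n+1}}.
\end{equation*}
All quantities are positive, so cross-multiplying yields exactly \eqref{prop-5-ieq1}.

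For the constant-free inequality \eqref{prop-5-ieq2}, set $r_j=S_j(\lambda)/S_{j-1}(\lambda)>0$ for $1\leq j\leq k$. Write $N=n+1$. Applying \eqref{prop-5-ieq1} (equivalently Lemma~\ref{N-M}) with consecutive indices $(j+1,j)$ gives
\begin{equation*}
r_{j+1}\frac{j+1}{N-j}\leq r_j\frac{j}{N-j+1},
\end{equation*}
which rearranges to
\begin{equation*}
r_{j+1}\leq \frac{j(N-j)}{(j+1)(N-j+1)}\,r_j\leq r_j.
\end{equation*}
So the unnormalized ratios are nonincreasing on $1\leq j\leq k$. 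This is the key point: the binomial factors only help, so no constant appears. Since
\begin{equation*}
\Big(\frac{S_k}{S_l}\Big)^{\frac{1}{k-l}}=\Big(\prod_{j=l+1}^k r_j\Big)^{\frac{1}{k-l}},
\end{equation*}
the left side of \eqref{prop-5-ieq2} is the geometric mean $G(l+1,k)$ of $r_{l+1},\dots,r_k$. Likewise the right side is $G(q+1,p)$.

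For a positive nonincreasing sequence, the geometric mean over a window $[a,b]$ is nonincreasing in both $a$ and $b$. Dropping terms from the right end removes the smallest terms, and adding terms at the left end adds the largest ones. There are two cases.
\begin{itemize}
\item If $p\leq l$, every $r_j$ with $j\leq p$ is at least every $r_i$ with $i\geq l+1$, so $G(q+1,p)\geq r_p\geq r_{l+1}\geq G(l+1,k)$.
\item If $p\geq l+1$, then $G(l+1,k)\leq G(l+1,p)\leq G(q+1,p)$, using $p\leq k$ for the first step and $q\leq l$ for the second.
\end{itemize}
In either case \eqref{prop-5-ieq2} follows, and the same argument covers $l=0$ and $q=0$.

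The only delicate step is the positivity bookkeeping: one must verify that $\lambda'$ lies in $\Gamma_k$ so that Lemma~\ref{N-M} applies and every $r_j$ with $j\leq k$ is positive. Once that is settled, the reduction to monotone ratios and geometric means is straightforward.
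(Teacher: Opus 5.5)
Your proof is correct: the embedding $S_j(\lambda)=\sigma_j(\lambda_1,\dots,\lambda_n,\alpha)$ with $(\lambda,\alpha)\in\Gamma_k\subset\mathbb{R}^{n+1}$, followed by Lemma~\ref{N-M} and the observation that the unnormalized ratios $S_j/S_{j-1}$ are nonincreasing, gives both \eqref{prop-5-ieq1} and the constant-free \eqref{prop-5-ieq2}. The paper gives no argument of its own and simply cites \cite{RW25-2}; the $C_{n+1}^{\,\cdot}$ normalizations in \eqref{prop-5-ieq1} point to the same $(n+1)$-dimensional reduction to Newton--MacLaurin, so your write-up is essentially the intended proof, made self-contained.
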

\begin{proof}
The proof can be found in \cite{RW25-2}.
\end{proof}

\begin{lemma}\label{prop-6}
Suppose that $\lambda=(\lambda_{1}, \dots, \lambda_{n})\in\widetilde{\Gamma}'_{k}$, and there is an ordering that $\lambda_{1}\geq\lambda_{2}\geq\dots\geq\lambda_{n}$, then we have\\
(1)~$\eta_{1}\leq\eta_{2}\leq\dots\leq\eta_{n}$, and $\eta_{n-k+2}>0$ for $2\leq k\leq n$;\\
(2)~$S_{k-1}(\eta|n-k+1)\geq c(n,k)S_{k-1}(\eta)$, $1\leq k<n$;\\
(3)~$\frac{\partial\big[\frac{S_{k}(\eta)}{S_{l}(\eta)}\big]}{\partial\eta_{1}}\geq\frac{\partial\big[\frac{S_{k}(\eta)}{S_{l}(\eta)}\big]}{\partial\eta_{2}}
\geq\dots\geq\frac{\partial\big[\frac{S_{k}(\eta)}{S_{l}(\eta)}\big]}{\partial\eta_{n}}$, $0\leq l<k\leq n$;\\
(4)~$\frac{\partial\big[\frac{S_{k}(\eta)}{S_{l}(\eta)}\big]}{\partial\lambda_{1}}\leq\frac{\partial\big[\frac{S_{k}(\eta)}{S_{l}(\eta)}\big]}{\partial\lambda_{2}}
\leq\dots\leq\frac{\partial\big[\frac{S_{k}(\eta)}{S_{l}(\eta)}\big]}{\partial\lambda_{n}}$, $0\leq l<k\leq n$;\\
(5)~$\forall~1\leq i\leq n$, $\frac{\partial\big[\frac{S_{k}(\eta)}{S_{l}(\eta)}\big]}{\partial\lambda_{i}}\geq 
c(n, k, l)\frac{1}{\tau}\sum\limits_{j=1}^{n}\frac{\partial\big[\frac{S_{k}(\eta)}{S_{l}(\eta)}\big]}{\partial\lambda_{j}}$, $0\leq l<k<n$;\\
(6)~$\sum\limits_{i=1}^{n}\frac{\partial\big[\frac{S_{k}(\eta)}{S_{l}(\eta)}\big]}{\partial\lambda_{i}}=(n\tau-1)\sum\limits_{i=1}^{n}\frac{\partial\big[\frac{S_{k}(\eta)}{S_{l}(\eta)}\big]}{\partial\eta_{i}}
\geq c(n, k, l)\tau f^{1-\frac{1}{k-l}}$ for equation \eqref{Hessian quo-2}, $0\leq l<k\leq n$.
\end{lemma}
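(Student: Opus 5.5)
The plan is to prove the six items in order. Items (1) and (2) are pure ordering statements. Items (3)–(5) concern monotonicity and comparability of the derivatives of $F:=S_k/S_l$. Item (6) combines the chain rule with the Newton–MacLaurin type inequalities of Lemma~\ref{prop-5}.

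\textbf{Items (1) and (2).} Since $\eta_i=\tau\sigma_1(\lambda)-\lambda_i$, the ordering $\lambda_1\ge\dots\ge\lambda_n$ reverses to $\eta_1\le\dots\le\eta_n$. Because $\lambda\in\widetilde\Gamma'_k$ means $\eta\in\widetilde\Gamma_k$, we may apply Lemma~\ref{prop-3}(2) to $\eta$ with its entries listed in decreasing order $\eta_n\ge\eta_{n-1}\ge\dots\ge\eta_1$. In this list the $(k-1)$-th largest entry is $\eta_{n-k+2}$, so it is positive, which gives (1). The $k$-th largest entry is $\eta_{n-k+1}$, so the last assertion of Lemma~\ref{prop-3}(2) gives $S_{k-1}(\eta|n-k+1)\ge c(n,k)S_{k-1}(\eta)$, which is (2).

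\textbf{Item (3).} We have $\partial F/\partial\eta_i=\big[S_{k-1}(\eta|i)S_l-S_kS_{l-1}(\eta|i)\big]/S_l^2$. Expand $S_m(\eta|i)=S_m(\eta|ij)+\eta_jS_{m-1}(\eta|ij)$, and similarly with $i$ and $j$ swapped. This gives
\begin{equation*}
\frac{\partial F}{\partial\eta_i}-\frac{\partial F}{\partial\eta_j}=(\eta_j-\eta_i)\,\frac{S_{k-2}(\eta|ij)S_l-S_kS_{l-2}(\eta|ij)}{S_l^2}.
\end{equation*}
So for $i<j$ it suffices to show $S_{k-2}(\eta|ij)S_l\ge S_kS_{l-2}(\eta|ij)$. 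I would expand
\begin{equation*}
S_k=S_k(\eta|ij)+(\eta_i+\eta_j)S_{k-1}(\eta|ij)+\eta_i\eta_jS_{k-2}(\eta|ij),
\end{equation*}
and $S_l$ in the same way. The $\eta_i\eta_j$ terms cancel. The remaining cross terms are then handled pairwise by inequality \eqref{prop-5-ieq1} applied to $(\eta|ij)$, which lies in a suitable lower cone. This follows the argument of \cite{RW25-2}.

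\textbf{Item (4).} By the chain rule, $\partial F/\partial\lambda_i=\tau\sum_p\partial F/\partial\eta_p-\partial F/\partial\eta_i$. The first term does not depend on $i$, so the ordering in (3) reverses.

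\textbf{Item (5).} By (4) it suffices to treat $i=1$. Write $\Sigma=\sum_p\partial F/\partial\eta_p$. Then
\begin{equation*}
\frac{\partial F}{\partial\lambda_1}=(\tau-1)\Sigma+\sum_{p\ge2}\frac{\partial F}{\partial\eta_p}.
\end{equation*}
Since $k<n$, the index $n-k+1$ is at least $2$. The key claim is $\partial F/\partial\eta_{n-k+1}\ge c\,\partial F/\partial\eta_1$. I would try to derive it from the explicit formula together with (2): bound the positive part $S_{k-1}(\eta|n-k+1)S_l$ from below by $c\,S_{k-1}(\eta)S_l$. For the negative part, $S_{l-1}(\eta|i)$ is increasing in $i$ by Lemma~\ref{prop-3}(2), which would need to be controlled by the same comparison. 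Granting the claim, $\sum_{p\ge2}\partial F/\partial\eta_p\ge c'\Sigma$, hence $\partial F/\partial\lambda_1\ge c'\Sigma$. By (6), $\sum_j\partial F/\partial\lambda_j=(n\tau-1)\Sigma\le n\tau\,\Sigma$, and this gives the stated bound with factor $1/\tau$. I expect this comparability step, and the inequality needed in (3), to be the main obstacle. In both places the $S_{l-1}$ and $S_{l-2}$ terms must be absorbed using \eqref{prop-5-ieq1}.

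\textbf{Item (6).} The identity is the chain rule, since $\sum_i\partial\eta_p/\partial\lambda_i=n\tau-1$. For the lower bound, compute $\sum_i\partial S_m/\partial\eta_i=(n-m+1)\sigma_{m-1}+\alpha(n-m+2)\sigma_{m-2}$. Then $\Sigma$ is bounded below by $(S_{k-1}S_l-cS_kS_{l-1})/S_l^2$ with appropriate positive constants. Using \eqref{prop-5-ieq1}, this is at least $c\,S_{k-1}/S_l$. Next, \eqref{prop-5-ieq2} with $(p,q)=(k-1,l)$ gives $S_{k-1}/S_l\ge c\,(S_k/S_l)^{1-\frac{1}{k-l}}=c\,f^{1-\frac{1}{k-l}}$. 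The degenerate case $k-1=l$ is handled directly. Finally, $n\tau-1\ge\tau(n-1)$ produces the factor $\tau$.
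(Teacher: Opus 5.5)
Your items (1), (2), (4) and the chain-rule identity in (6) agree with the paper. Item (5), however, has a genuine gap. The comparability claim $\partial F/\partial\eta_{n-k+1}\ge c\,\partial F/\partial\eta_1$ (with your $F=S_k/S_l$) is only granted, and the route you sketch cannot produce it. You bound the positive part $S_{k-1}(\eta|n-k+1)S_l$ from below and treat the negative part $S_kS_{l-1}(\eta|n-k+1)$ by monotonicity in the index. That controls the two pieces separately and gives no lower bound on their difference. The monotonicity is also reversed: since $S_m(\eta|i)-S_m(\eta|j)=(\eta_j-\eta_i)S_{m-1}(\eta|ij)$, $S_{l-1}(\eta|i)$ is nonincreasing in $i$. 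The missing idea is the cancellation
\[
S_{k-1}(\eta|j)S_l(\eta)-S_k(\eta)S_{l-1}(\eta|j)=S_{k-1}(\eta|j)S_l(\eta|j)-S_k(\eta|j)S_{l-1}(\eta|j),
\]
obtained by writing $S_m(\eta)=S_m(\eta|j)+\eta_jS_{m-1}(\eta|j)$. The $\eta_j$-terms drop out, and Newton--MacLaurin applied to $(\eta|j)$ gives $\partial F/\partial\eta_j\ge c\,S_{k-1}(\eta|j)S_l(\eta|j)/S_l^2$. Then $\partial F/\partial\lambda_1=(\tau-1)\Sigma+\sum_{p\ge2}\partial F/\partial\eta_p\ge\partial F/\partial\eta_2$. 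Combine this with $S_{k-1}(\eta|2)\ge S_{k-1}(\eta|n-k+1)\ge cS_{k-1}(\eta)$, $S_l(\eta|2)\ge S_l(\eta|n-l)\ge cS_l(\eta)$ (this is where $k<n$ is used), and the upper bound $\Sigma\le(n-k+2)S_{k-1}/S_l$; this yields (5).

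Items (3) and (6) are not closed either.

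In (3), after the $\eta_i\eta_j$-cancellation your cross term carries the factor $\eta_i+\eta_j$, which can be negative. Take $\alpha=0$, $n=5$, $k=3$, $l=1$, $\eta=(-2\epsilon,-\epsilon,1,1,1)\in\widetilde{\Gamma}_3$ with small $\epsilon>0$ and $(i,j)=(1,2)$: the cross term equals $-9\epsilon$. So no pairwise sign argument works; the term must be absorbed by the other bracket. The paper avoids this entirely by using that $(S_k/S_l)^{1/(k-l)}$ is concave on $\widetilde{\Gamma}_k$. Then \eqref{prop-7-ieq2}, applied to $\eta_1\le\dots\le\eta_n$, orders its $\eta$-derivatives. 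Multiplying by the positive factor $(k-l)(S_k/S_l)^{1-1/(k-l)}$ gives (3) at once.

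In (6), the natural way to reach the form $c_1S_{k-1}S_l-c_2S_kS_{l-1}$ is via $\sigma_{k-1}\le S_{k-1}$ and $\alpha\sigma_{l-2}\le S_{l-1}$. This gives $(n-k+1)S_{k-1}S_l-(n-l+2)S_kS_{l-1}$. Then \eqref{prop-5-ieq1}, which is Newton--MacLaurin in $n+1$ variables, only yields the coefficient $\frac{(k-l)(n-k+1)-l}{k}$. This is $\le 0$ for $k=n$, $l\ge n/2$, which lies inside the range of Theorem~\ref{maintheorem3}. The paper instead expands $\Sigma S_l^2$ into the four products $\sigma_{k-1}\sigma_l$, $\sigma_{k-2}\sigma_l$, $\sigma_{k-1}\sigma_{l-1}$, $\sigma_{k-2}\sigma_{l-1}$. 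It applies the sharp $n$-variable inequality \eqref{N-M-ieq} to each matching negative term, which leaves a positive multiple of $S_{k-1}S_l$.
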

\begin{proof}
Both (1) and (2) follow directly from Lemma \ref{prop-3}. From \eqref{prop-7-ieq2} and the concavity of $[\frac{S_{k}(\eta)}{S_{l}(\eta)}]^{1/k-l}$ in $\widetilde{\Gamma}_{k}$, assertion (3) follows directly. 

By the definition of $\eta$, a straightforward calculation gives
\begin{equation*}
\frac{\partial\big[\frac{S_{k}(\eta)}{S_{l}(\eta)}\big]}{\partial\lambda_{i}}
=\frac{\partial\big[\frac{S_{k}(\eta)}{S_{l}(\eta)}\big]}{\partial\eta_{p}}\frac{\partial\eta_{p}}{\partial\lambda_{i}}
=\tau\sum\limits_{j=1}^{n}\frac{\partial\big[\frac{S_{k}(\eta)}{S_{l}(\eta)}\big]}{\partial\eta_{j}}-\frac{\partial\big[\frac{S_{k}(\eta)}{S_{l}(\eta)}\big]}{\partial\eta_{i}},
\end{equation*}
combining (3) and $\tau\geq1$ yields (4). 

Next, we prove (5) similarly to the proof of Lemma~11 in \cite{RW25-2}. A direct computation yields
\begin{equation}\label{prop-6 ineq-1}
\begin{aligned}
\sum\limits_{j=1}^{n}\frac{\partial\big[\frac{S_{k}(\eta)}{S_{l}(\eta)}\big]}{\partial\lambda_{j}}=&~(n\tau-1)\sum\limits_{j=1}^{n}\frac{\partial\big[\frac{S_{k}(\eta)}{S_{l}(\eta)}\big]}{\partial\eta_{j}}\\
=&~(n\tau-1)\sum\limits_{j=1}^{n}\frac{S_{k-1}(\eta|j)S_{l}(\eta)-S_{k}(\eta)S_{l-1}(\eta|j)}{S_{l}^{2}(\eta)}\\
=&~(n\tau-1)\frac{1}{S_{l}^{2}(\eta)}\Big[(n-k+2)S_{k-1}(\eta)S_{l}(\eta)-\sigma_{k-1}(\eta)S_{l}(\eta)\\&-(n-l+1)S_{k}(\eta)S_{l-1}(\eta)-\alpha\sigma_{l-2}(\eta)S_{k}(\eta)\Big]\\
\leq&~(n\tau-1)(n-k+2)\frac{S_{k-1}(\eta)}{S_{l}(\eta)}.
\end{aligned}
\end{equation}
By (4), it suffices to prove the case $i=1$, the other cases then follow directly. From Lemma \ref{N-M}, we have
\begin{equation*}
\begin{aligned}
\frac{\partial\big[\frac{S_{k}(\eta)}{S_{l}(\eta)}\big]}{\partial\lambda_{1}}
=&~\tau\sum\limits_{j=1}^{n}\frac{S_{k-1}(\eta|j)S_{l}(\eta)-S_{k}(\eta)S_{l-1}(\eta|j)}{S_{l}^{2}(\eta)}-\frac{S_{k-1}(\eta|1)S_{l}(\eta)-S_{k}(\eta)S_{l-1}(\eta|1)}{S_{l}^{2}(\eta)}\\
=&~\tau\sum\limits_{j=1}^{n}\frac{S_{k-1}(\eta|j)S_{l}(\eta|j)-S_{k}(\eta|j)S_{l-1}(\eta|j)}{S_{l}^{2}(\eta)}-\frac{S_{k-1}(\eta|1)S_{l}(\eta|1)-S_{k}(\eta|1)S_{l-1}(\eta|1)}{S_{l}^{2}(\eta)}\\
\geq&~\frac{(n+1)(k-l)}{k(n-l+1)}\bigg[\tau\sum\limits_{j=2}^{n}\frac{S_{k-1}(\eta|j)S_{l}(\eta|j)}{S_{l}^{2}(\eta)}+(\tau-1)\frac{S_{k-1}(\eta|1)S_{l}(\eta|1)}{S_{l}^{2}(\eta)}\bigg]\\
\geq&~\frac{(n+1)(k-l)}{k(n-l+1)}\frac{S_{k-1}(\eta|2)S_{l}(\eta|2)}{S_{l}^{2}(\eta)}.
\end{aligned}
\end{equation*}
If $0\leq l<k< n$, we know $n-l \geq n-k+1 \geq 2$. Combining this with (2) and (3) gives 
\begin{equation}\label{prop-6 ineq-2}
\begin{aligned}
S_{k-1}(\eta|2)\geq&~S_{k-1}(\eta|n-k+1) \geq c(n,k)S_{k-1}(\eta)\\
S_{l}(\eta|2)&\geq S_{l}(\eta|n-l)\geq c(n,l)S_{l}(\eta)
\end{aligned}
\end{equation}
Together with \eqref{prop-6 ineq-1} and \eqref{prop-6 ineq-2}, this yields
\begin{equation*}
\begin{aligned}
\frac{\partial\big[\frac{S_{k}(\eta)}{S_{l}(\eta)}\big]}{\partial\lambda_{1}}
\geq c(n, k, l)\frac{S_{k-1}(\eta)}{S_{l}(\eta)}\geq c(n, k, l)\frac{1}{\tau}\sum\limits_{j=1}^{n}\frac{\partial\big[\frac{S_{k}(\eta)}{S_{l}(\eta)}\big]}{\partial\lambda_{j}}.
\end{aligned}
\end{equation*}
This completes the proof of (5).

Finally, we prove (6). By direct calculation, we obtain
\begin{equation*}
\begin{aligned}
\sum\limits_{j=1}^{n}\frac{\partial\big[\frac{S_{k}(\eta)}{S_{l}(\eta)}\big]}{\partial\eta_{j}}
=&~\frac{1}{S_{l}^{2}(\eta)}\big[(n-k+2)S_{k-1}(\eta)S_{l}(\eta)+\alpha\sigma_{k-2}(\eta)S_{l}(\eta)\\&-(n-l+1)S_{k}(\eta)S_{l-1}(\eta)-\alpha\sigma_{l-2}(\eta)S_{k}(\eta)\big]\\
=&~\frac{1}{S_{l}^{2}(\eta)}\big[(n-k+1)\sigma_{k-1}\sigma_{l}-(n-l+1)\sigma_{k}\sigma_{l-1}+\alpha(n-k+2)\sigma_{k-2}\sigma_{l}\\
&-\alpha(n-l+1)\sigma_{k-1}\sigma_{l-1}+\alpha(n-k+1)\sigma_{k-1}\sigma_{l-1}-\alpha(n-l+2)\sigma_{k}\sigma_{l-2}\\
&+\alpha^{2}(n-k+2)\sigma_{k-2}\sigma_{l-1}-\alpha^{2}(n-l+2)\sigma_{k-1}\sigma_{l-2}\big]\\ 
\geq&~\frac{1}{S_{l}^{2}(\eta)}\big[\frac{k-l}{k}(n-k+1)\sigma_{k-1}\sigma_{l}+\alpha\frac{k-l-1}{k-1}(n-k+2)\sigma_{k-2}\sigma_{l}\\
&+\alpha\frac{k-l+1}{k}(n-k+1)\sigma_{k-1}\sigma_{l-1}+\alpha^{2}\frac{k-l}{k-1}(n-k+2)\sigma_{k-2}\sigma_{l-1}\big]\\
\geq&~c(n,k,l)\frac{S_{k-1}(\eta)}{S_{l}(\eta)}.
\end{aligned}
\end{equation*}
Combing with Lemma \ref{prop-5} and (5), we obtain
\begin{equation*}
\begin{aligned}
\sum\limits_{i=1}^{n}\frac{\partial\big[\frac{S_{k}(\eta)}{S_{l}(\eta)}\big]}{\partial\lambda_{i}}
=&~(n\tau-1)\sum\limits_{i=1}^{n}\frac{\partial\big[\frac{S_{k}(\eta)}{S_{l}(\eta)}\big]}{\partial\eta_{i}}\geq c(n,k,l)\tau\frac{S_{k-1}(\eta)}{S_{l}(\eta)}\\
&\geq c(n,k,l)\tau\big(\frac{S_{k}(\eta)}{S_{l}(\eta)}\big)^{1-\frac{1}{k-l}}\geq c(n,k,l)\tau f^{1-\frac{1}{k-l}}.
\end{aligned}
\end{equation*}
This finishes the proof.
\end{proof}

We also need the following well-known result (see, e.g., \cite{Ball84}).

\begin{lemma}\label{prop-7}
If $W=(w_{ij})$ is a real symmetric matrix, $\lambda_{i}=\lambda_{i}(W)$ is one of the eigenvalues $(i=1, \dots, n)$ and $F=F(W)=f(\lambda(W))$ is a symmetric function of $\lambda_{1}, \dots, \lambda_{n}$, then for any real symmetric matrix $A=(a_{ij})$, we have
\begin{equation}\label{prop-7-ieq1}
\frac{\partial^{2}F}{\partial w_{ij}\partial w_{st}}a_{ij}a_{st}=\frac{\partial^{2}f}{\partial \lambda_{p}\partial \lambda_{q}}a_{pp}a_{qq}
+2\sum\limits_{p<q}\frac{\frac{\partial f}{\partial \lambda_{p}}-\frac{\partial f}{\partial \lambda_{q}}}{\lambda_{p}-\lambda_{q}}a_{pq}^{2}.
\end{equation}
Moreover, if $f$ is concave and $\lambda_{1}\geq\lambda_{2}\geq\dots\geq\lambda_{n}$, we have
\begin{equation}\label{prop-7-ieq2}
\frac{\partial f}{\partial \lambda_{1}}(\lambda)\leq\frac{\partial f}{\partial \lambda_{2}}(\lambda)\leq\dots\leq\frac{\partial f}{\partial \lambda_{n}}(\lambda).
\end{equation}
\end{lemma}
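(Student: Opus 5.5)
The statement to prove is Lemma~\ref{prop-7}, the classical formula for the second derivatives of a spectral function $F(W)=f(\lambda(W))$ together with the monotonicity of $\partial f/\partial\lambda_i$ for concave symmetric $f$. I would prove \eqref{prop-7-ieq1} at a point $W$ where the eigenvalues are distinct, and then obtain the general case by a limiting argument.

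\textbf{Step 1: reduction to a diagonal matrix.} Both sides of \eqref{prop-7-ieq1} are invariant under orthogonal conjugation $W\mapsto O^TWO$, $A\mapsto O^TAO$. So I may assume $W=\mathrm{diag}(\lambda_1,\dots,\lambda_n)$.

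\textbf{Step 2: second-order eigenvalue perturbation.} Consider $W(t)=W+tA$. For distinct $\lambda_p$, standard perturbation theory gives
\begin{equation*}
\lambda_p(t)=\lambda_p+t\,a_{pp}+t^2\sum_{q\neq p}\frac{a_{pq}^2}{\lambda_p-\lambda_q}+O(t^3).
\end{equation*}
This comes from differentiating $W(t)v_p(t)=\lambda_p(t)v_p(t)$ twice, normalizing $|v_p|=1$, and using $\dot v_p=\sum_{q\neq p}\frac{a_{qp}}{\lambda_p-\lambda_q}e_q$.

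Then
\begin{equation*}
\frac{d^2}{dt^2}F(W+tA)\Big|_{t=0}=\sum_{p,q}f_{pq}a_{pp}a_{qq}+\sum_p f_p\,\ddot\lambda_p,
\end{equation*}
with $\ddot\lambda_p=2\sum_{q\neq p}\frac{a_{pq}^2}{\lambda_p-\lambda_q}$. Grouping the pairs $p<q$ yields
\begin{equation*}
2\sum_{p<q}\frac{f_p-f_q}{\lambda_p-\lambda_q}\,a_{pq}^2 .
\end{equation*}
The left side equals $\frac{\partial^2F}{\partial w_{ij}\partial w_{st}}a_{ij}a_{st}$, which proves \eqref{prop-7-ieq1} in this case.

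\textbf{Step 3: repeated eigenvalues.} Since $f$ is symmetric and smooth, $\frac{f_p-f_q}{\lambda_p-\lambda_q}$ extends continuously to $\lambda_p=\lambda_q$. This holds because $f_p-f_q$ vanishes on the hyperplane $\{\lambda_p=\lambda_q\}$, so the limit is $f_{pp}-f_{pq}$. Approximating $W$ by matrices with simple spectrum, and using the continuity of $D^2F$ for $C^2$ symmetric $f$, gives \eqref{prop-7-ieq1} everywhere. I expect this approximation step to be the most delicate part. Alternatively, one can simply cite \cite{Ball84} for the regularity of $F$.

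\textbf{Step 4: monotonicity \eqref{prop-7-ieq2}.} Take $p<q$, so $\lambda_p\ge\lambda_q$, and let $\lambda'$ be $\lambda$ with $\lambda_p$ and $\lambda_q$ swapped. By symmetry of $f$, the gradients satisfy $f_p(\lambda')=f_q(\lambda)$ and $f_q(\lambda')=f_p(\lambda)$. Concavity gives monotonicity of the gradient:
\begin{equation*}
\big(\nabla f(\lambda)-\nabla f(\lambda')\big)\cdot(\lambda-\lambda')\le0 .
\end{equation*}
Substituting, this reads
\begin{equation*}
2\,(f_p-f_q)(\lambda)\,(\lambda_p-\lambda_q)\le0 .
\end{equation*}
Hence $f_p\le f_q$ whenever $\lambda_p>\lambda_q$. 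When $\lambda_p=\lambda_q$, symmetry gives $f_p=f_q$ directly. Chaining over consecutive indices yields \eqref{prop-7-ieq2}.
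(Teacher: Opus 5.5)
Your proof is correct. The paper gives no argument of its own for this lemma. It cites Lemma 3.2 of \cite{GLM18} (and \cite{BAn94}) for \eqref{prop-7-ieq1}, Lemma 2.2 of \cite{BAn94} for \eqref{prop-7-ieq2}, and \cite{Ball84} as background.

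Your write-up reconstructs the standard arguments behind those references:
\begin{itemize}
\item second-order eigenvalue perturbation at a matrix with simple spectrum;
\item a density argument based on the continuous extension of $\frac{f_p-f_q}{\lambda_p-\lambda_q}$ to $f_{pp}-f_{pq}$ on $\{\lambda_p=\lambda_q\}$;
\item monotonicity of the gradient of a concave function, tested against the transposed point $\lambda'$.
\end{itemize}
The only non-elementary input is that $F$ is $C^2$ at matrices with repeated eigenvalues. You isolate it correctly and attribute it to Ball. Everything else in your version is self-contained, whereas the paper relies entirely on the literature.

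Two small refinements make Steps 3 and 4 airtight.

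In Step 3, approximate $W=\mathrm{diag}(\lambda_1,\dots,\lambda_n)$ by the diagonal matrices $\mathrm{diag}(\lambda_1+\epsilon\mu_1,\dots,\lambda_n+\epsilon\mu_n)$, with $\mu$ chosen so the entries become distinct. Then the entries $a_{pq}$ in the fixed frame do not change. The limit uses only continuity of the extended quotients and of $D^2F$. For an arbitrary approximating sequence the eigenframes need not converge, so this choice matters.

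In Step 4, your swap argument needs $f$ to be concave only on a convex, permutation-invariant domain, since both $\lambda'$ and the segment from $\lambda$ to $\lambda'$ stay in it. This is exactly how the paper uses \eqref{prop-7-ieq2}, for instance in \eqref{5-ieq1}. There the relevant function is concave only on the cone $\widetilde{\Gamma}_{k}$, not on all of $\mathbb{R}^n$.
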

\begin{proof}
Lemma 3.2 of \cite{GLM18} (see also \cite{BAn94}) gives the proof of \eqref{prop-7-ieq1}, while Lemma 2.2 of \cite{BAn94} proves \eqref{prop-7-ieq2}.
\end{proof}

\section{Proof of Theorem \ref{maintheorem1}}  \label{S3}

In this section, we establish the interior a priori Hessian estimate for \eqref{Hessian quo-2}, following the method similar to \cite{CDH23, RW25-2}.

Assume that $u\in C^{4}(B_{r}(o))$ is a solution of the equation \eqref{Hessian quo-2} with $\lambda(\nabla^{2}u)\in\Gamma_{k-1}'$. For any complete Riemannian manifold $M^n$, one can choose a suitable point $o$ as the origin. Subsequently, we apply the maximum principle on geodesic ball $B_{r}(o)$. Since $B_{r}(o)$ is a compact domain, there exists a positive constant $K$ depending only on the Riemann curvature tensor such that $\underline{K}_{B_{r}(o)}\geq-K$.

For any point $x$ on $M^{n}$, we choose local coordinates $(\xi^{1},\dots,\xi^{n})$. For convenience, let $\partial/\partial \xi^{i}$ be the corresponding coordinate vector fields on $M^{n}$. Denote by $\zeta(x)=\zeta(\max\lambda(\nabla^2u))=(\zeta_{1}(x),\cdots,\zeta_{n}(x))$ the continuous eigenvector field of $\nabla^2u(x)$ corresponding to the maximum eigenvalue. 

Consider the auxiliary function
\begin{equation*}
\phi(x)=s(x)h(\frac{|\nabla u(x)|^2}{2})u_{\zeta\zeta}
\end{equation*}
on $B_{r}(o)$, where $s(x)=r^{2}-\rho^{2}(x)$ (i.e. $\rho(x)=\rho(x,o)$) and $h(t)=(1-\frac{t}{A})^{-\frac{1}{3}}$ with $A=\sup|\nabla u|^{2}$. Then we have
\begin{equation*}
h'(t)=\frac{1}{3A}(1-\frac{t}{A})^{-\frac{4}{3}},\quad\quad h''(t)=\frac{4}{9A^{2}}(1-\frac{t}{A})^{-\frac{7}{3}}.
\end{equation*}

Suppose that $\phi(x)$ attains its maximum value in $B_{r}(o)$ at $x_{0}$.
If $x_{0}\in\partial B_{r}(o)$, then $\rho(x_{0})=r$ and hence $\phi(x_{0})=0$, which contradicts that $\phi$ attains its maximum at $x_{0}$. Thus $x_0$ must lie in the interior of $B_{r}(o)$.
In the following we distinguish two cases according to the position of $x_{0}$. 

We first consider the case $x_{0} \notin \operatorname{Cut}(o)$
(Note that when the sectional curvature of $M^{n}$ is nonpositive, the cut locus $\operatorname{Cut}(o)=\emptyset$; hence the case $x_{0}\notin\operatorname{Cut}(o)$ is automatically satisfied for all $x_{0}$. 
For the uniformity of the proof, we keep the same case distinction as in the general setting).

Choosing a local orthonormal frame field at $x_{0}$ such that $g_{ij}(x_{0})=\delta_{ij}$, and then rotating the coordinate axes, we can diagonalize the Hessian matrix $\nabla^{2}u=(u_{ij})$,
\begin{equation*}
u_{ij}(x_{0})=u_{ii}(x_{0})\delta_{ij},\quad\quad u_{11}(x_{0})\geq u_{22}(x_{0})\geq\cdots\geq u_{nn}(x_{0}).
\end{equation*}
From the above, we have $\zeta(x_0) = (1,0,\dots,0)$. Denote $\lambda_{i} = u_{ii}(x_0)$ and $\lambda = (\lambda_1,\cdots,\lambda_n)$, and then $\lambda_1 \geq \lambda_2 \geq \cdots \geq \lambda_n$. Moreover, the test function
\begin{equation}\label{3-eq1}
\varphi=\log s+\log h(\frac{|\nabla u(x)|^2}{2})+\log u_{11}
\end{equation}
also attains local maximum at $x_{0}$. 

Differentiating $\varphi$ at $x_{0}$ once yields
\begin{equation}\label{3-eq2}
0=\varphi_{i}=\frac{s_{i}}{s}+\frac{h'}{h}\sum_{k=1}^{n}u_{k}u_{ki}+\frac{u_{11i}}{u_{11}},
\end{equation}
therefore,
\begin{equation}\label{3-ieq3}
\frac{u_{11i}^{2}}{u_{11}^{2}}\leq2\frac{s_{i}^{2}}{s^{2}}+2\frac{(h')^{2}}{h^{2}}u_{i}^{2}u_{ii}^{2},
\end{equation}
Differentiating $\varphi$ at $x_{0}$ twice results in
\begin{equation}\label{3-ieq4}
\begin{aligned}
0\geq\varphi_{ii}=&~\frac{s_{ii}}{s}-\frac{s_{i}^{2}}{s^{2}}+\frac{h''h-(h')^{2}}{h^{2}}\sum_{k=1}^{n}u_{k}u_{ki}\sum_{l=1}^{n}u_{l}u_{li}\\
&+\frac{h'}{h}\sum_{k=1}^{n}(u_{ki}u_{ki}+u_{k}u_{kii})+\frac{u_{11ii}}{u_{11}}-\frac{u_{11i}^{2}}{u_{11}^{2}}\\
\geq&~\frac{s_{ii}}{s}-3\frac{s_{i}^{2}}{s^{2}}+\frac{h''h-3(h')^{2}}{h^{2}}u_{i}^{2}u_{ii}^{2}+\frac{h'}{h}\bigg(u_{ii}^{2}+\sum_{k=1}^{n}u_{k}u_{kii}\bigg)+\frac{u_{11ii}}{u_{11}}\\
\geq&~\frac{s_{ii}}{s}-3\frac{s_{i}^{2}}{s^{2}}+\frac{h'}{h}\bigg(u_{ii}^{2}+\sum_{k=1}^{n}u_{k}u_{kii}\bigg)+\frac{u_{11ii}}{u_{11}},
\end{aligned}
\end{equation}
where the last inequality follows from the identity $h''h-3(h')^{2}=\frac{1}{9A^{2}}(1-\frac{t}{A})^{-\frac{8}{3}}>0$.

Denote 
\begin{equation*}
F^{ij} = \frac{\partial \frac{S_{k}(\eta)}{S_{l}(\eta)}}{\partial u_{ij}}=
\begin{cases}
\frac{\partial \frac{S_{k}(\eta)}{S_{l}(\eta)}}{\partial \lambda_{i}},\quad i=j,\\
0,\quad\quad\quad i\neq j,
\end{cases}
\end{equation*}
and
\begin{equation*}
F^{ij,rs} = \frac{\partial^{2} \frac{S_{k}(\eta)}{S_{l}(\eta)}}{\partial u_{ij}\partial u_{rs}}.
\end{equation*}
Thus, we obtain
\begin{equation}\label{3-eq5}
\sum_{i=1}^{n}F^{ii}u_{ii1} = \sum_{i,j=1}^{n}F^{ij}u_{ij1} = \partial_{1}f+\partial_{u}fu_{1}+\sum_{i=1}^{n}\partial_{u_{i}}fu_{i1}.
\end{equation}
where $\partial_{i}f$ denotes the partial derivative of $f$ with respect to the $i$-th component $\xi_{i}$ of its first variable $x$, $\partial_{u}f$ denotes the partial derivative of $f$ with respect to $u$, and $\partial_{u_{i}}f$ denotes the partial derivative of $f$ with respect to $\nabla u$. Differentiating equation \eqref{Hessian quo-2} twice gives
\begin{equation}\label{3-eq6}
F^{ij,rs}u_{ijp}u_{rsp}+F^{ii}u_{iipp}=f_{pp}.
\end{equation}
From the definition of $f$, \eqref{3-eq7-1} and \eqref{3-eq2}, we have the following estimate
\begin{equation}\label{3-ieq8}
\begin{aligned}
|f_{11}| =&~|\partial^{2}_{11}f+\partial^{2}_{\xi_{1}u}f\frac{\partial u}{\partial \xi_{1}}+\sum_{i=1}^{n}\partial^{2}_{\xi_{1}u_{i}}f\frac{\partial u_{i}}{\partial \xi_{1}}+\partial^{2}_{u\xi_{1}}f\frac{\partial u}{\partial \xi_{1}}+\partial^{2}_{uu}f(\frac{\partial u}{\partial \xi_{1}})^{2}\\
&+\sum_{i=1}^{n}\partial^{2}_{uu_{i}}f\frac{\partial u_{i}}{\partial \xi_{1}}\frac{\partial u_{i}}{\partial \xi_{1}}+\partial_{u}f\frac{\partial^{2} u}{\partial \xi_{1}^{2}}+\sum_{i=1}^{n}\partial^{2}_{u_{i}\xi_{1}}f\frac{\partial u_{i}}{\partial \xi_{1}}+\sum_{i=1}^{n}\partial^{2}_{u_{i}u}f\frac{\partial u_{i}}{\partial \xi_{1}}\frac{\partial u}{\partial \xi_{1}}\\
&+\sum_{i=1}^{n}\sum_{j=1}^{n}\partial^{2}_{u_{i}u_{j}}f\frac{\partial u_{i}}{\partial \xi_{1}}\frac{\partial u_{j}}{\partial \xi_{1}}+\sum_{i=1}^{n}\partial_{u_{i}}f\frac{\partial^{2} u_{i}}{\partial \xi_{1}^{2}}|\\
\leq&~C(1+u_{11}+u_{11}^{2})+|\sum_{i=1}^{n}\partial_{u_{i}}fR^{m}_{1i1}u_{m}|+|\sum_{i=1}^{n}\partial_{u_{i}}fu_{11i}|\\
\leq&~C(1+u_{11}+u_{11}^{2}),
\end{aligned}
\end{equation} 
where we used the fact that the components of the Riemannian curvature tensor $R$ are bounded on $B_{r}(o)$, $f_{11}$ denotes the second covariant derivatives of $f$, and $C$ is a positive constant depending only on $n$, $r$, $||u||_{C^{1}}$, $||R_{ijk}^{l}||_{C^{0}}$, $||f||_{C^{2}(B_{r}(o)\times\mathbb{R}\times\mathbb{R}^{n})}$. Combining Lemma \ref{prop-4} with \eqref{3-eq2}, \eqref{3-eq5}, \eqref{3-eq6} and \eqref{3-ieq8}, assuming $u_{11}(x_{0})\geq1+\sup|\nabla u|$, we obtain
\begin{equation}\label{3-ieq9}
\begin{aligned}
\sum_{i=1}^{n}F^{ii}u_{ii11}&=\sum_{i,j=1}^{n}F^{ij}u_{ij11}\geq-\sum_{i,j,r,s=1}^{n}F^{ij,rs}u_{ij1}u_{rs1}-Cu_{11}^{2}-C\\
&\geq-\left(1-\frac{1}{k-l}\right)\frac{(\sum_{i=1}F^{ii}u_{ii1})^{2}}{f}-Cu_{11}^{2}-C\geq-Cu_{11}^{2}-C.
\end{aligned}
\end{equation}
From \eqref{3-eq7-1}, \eqref{3-eq7-2}, \eqref{3-ieq3}, \eqref{3-eq5} and \eqref{3-ieq9}, it follows that
\begin{equation}\label{3-ieq11}
\begin{aligned}
0\geq&\sum_{i=1}^{n}F^{ii}\varphi_{ii}\\
\geq&\sum_{i=1}^{n}F^{ii}\left(\frac{s_{ii}}{s}-3\frac{s_{i}^{2}}{s^{2}}\right)+\frac{h'}{h}\sum_{i=1}^{n}F^{ii}u_{ii}^{2}+\frac{h'}{h}\sum_{i=1}^{n}F^{ii}\sum_{k=1}^{n}\bigg(u_{k}u_{iik}+u_{k}R_{iki}^{m}u_{m}\bigg)\\
&+\sum_{i=1}^{n}F^{ii}\left(\frac{u_{ii11}}{u_{11}}-2R_{ii1}^{1}-2R_{1i1}^{i}\frac{u_{ii}}{u_{11}}-\frac{u_{j}}{u_{11}}(R_{ii1}^{j})_{,\xi_{1}}-\frac{u_{j}}{u_{11}}(R_{1i1}^{j})_{,\xi_{i}}\right)\\
\geq&\sum_{i=1}^{n}F^{ii}\left(\frac{s_{ii}}{s}-3\frac{s_{i}^{2}}{s^{2}}\right)+\frac{h'}{h}\sum_{i=1}^{n}F^{ii}u_{ii}^{2}-\sum_{i=1}^{n}F^{ii}(C\frac{h'}{h}+C)\\
&+\frac{h'}{h}\sum_{k=1}^{n}u_{k}\big(\partial_{k}f+\partial_{u}fu_{k}+\sum_{i=1}^{n}\partial_{u_{i}}fu_{ik}\big)-Cu_{11}-C\\
\geq&\sum_{i=1}^{n}F^{ii}\left(\frac{s_{ii}}{s}-3\frac{s_{i}^{2}}{s^{2}}\right)+\frac{1}{3A}F^{11}u_{11}^{2}-C\sum_{i=1}^{n}F^{ii}-Cu_{11}-C,
\end{aligned}
\end{equation}
where $C$ is a positive constant depending on $n$, $k$, $l$, $m_{1}$, $||R_{ijk}^{l}||_{C^{1}}$ and $||f||_{C^{2}}$.

If $x_{0}=o$, $\rho(x)$ is not differential at $o$ at this time; however, a standard fact in Riemannian geometry gives $\nabla(\rho^{2}) = 0$ and $\nabla^{2}(\rho^{2}) = 2g$ at $o$. Then we have
\begin{equation*}
s_{i}=-\nabla_{\frac{\partial}{\partial\xi^{i}}} (\rho^{2})=0, \quad s_{ii}=\nabla^{2}s(\frac{\partial}{\partial\xi^{i}},\frac{\partial}{\partial\xi^{i}})= -\nabla^{2}(\rho^{2})(\frac{\partial}{\partial\xi^{i}},\frac{\partial}{\partial\xi^{i}})= -2g(\frac{\partial}{\partial\xi^{i}},\frac{\partial}{\partial\xi^{i}})=-2.
\end{equation*}
If $x_{0}\neq o$, $\rho(x)$ is differentiable at $x_{0}$, $B_{r}(o)$ is a geodesic ball of radius $r$ with $\underline{K}_{B_{r}(o)}\geq-K$. on it. Then by Lemma \ref{Hess-compar}, we have on $B_{r}(o)\backslash \operatorname{Cut}(o)\cup\{o\}$
\begin{equation*}
s_{i}=-2\rho\rho_{i}\geq-2r\sup|\nabla\rho|,
\end{equation*}
\begin{equation*}
s_{ii}=-2\rho\rho_{ii}-2\rho_{i}^{2} \geq -2\sqrt{K}\rho \coth(\sqrt{K} \rho)(1-\rho_{i}^{2})-2\rho_{i}^{2}\geq -2\sqrt{K}r \coth(\sqrt{K} r).
\end{equation*}
Therefore, combining the above two cases, we obtain
\begin{equation}\label{3-ieq11-2}
\frac{s_{ii}}{s}-3\frac{s_{i}^{2}}{s^{2}}\geq-\frac{C}{s}-\frac{C}{s^{2}},
\end{equation}
where $C$ is a positive constant depending on $r$, $||R_{ijk}^{l}||_{C^{0}}$, $\sup|\nabla\rho|$.

Lemma \ref{prop-6} (5) and (6) implies that
\begin{equation}\label{3-ieq12}
F^{11}\geq c(n, k, l)\frac{1}{\tau}\sum\limits_{i=1}^{n}F^{ii}\geq C,
\end{equation}
where $C$ is a positive constant depending on $n$, $k$, $l$, $m_{1}$. Plugging \eqref{3-ieq11-2} and \eqref{3-ieq12} into \eqref{3-ieq11}, we have at $x_{0}$,
\begin{equation}\label{3-ieq13}
\begin{aligned}
0\geq&\sum_{i=1}^{n}F^{ii}\bigg(-\frac{C}{s}-\frac{C}{s^{2}}\bigg)+\frac{C}{A}u_{11}^{2}\sum\limits_{i=1}^{n}F^{ii}-C\sum_{i=1}^{n}F^{ii}-Cu_{11}-C.
\end{aligned}
\end{equation}
If $s^{2}u_{11}^{2}(x_{0})$ is sufficiently large, combined with the bound $\sum_{i}F^{ii}\geq C$, the right-hand side of the inequality becomes positive, yielding a contradiction. Hence 
\begin{equation}\label{3-ieq14}
s(x_{0})u_{11}(x_{0}) \leq C(1+\sup|\nabla u|),
\end{equation}
where $C$ is a positive constant depending on $n$, $k$, $l$, $m_{1}$, $r$, $\sup|\nabla\rho|$, $||R_{ijk}^{l}||_{C^{1}}$ and $||f||_{C^{2}}$. 

Next, we consider the case $x_{0} \in \operatorname{Cut}(o)$. Let $\gamma(\theta)$ be a least distance geodesic with speed $1$, and $\gamma(0)=o$, $\gamma(L)=x_{0}$. Then for any small $\varepsilon>0$, let $x_{1}=\gamma(\varepsilon)$, we have $\operatorname{dist}_{M^{n}}(x_{1}, o)=\varepsilon$, and $x_{0}$ is not a cut point of $x_{1}$ with $\rho_{1}(x)=\operatorname{dist}_{M^{n}}(x, x_{1})$. Define a new distance function
\begin{equation*}
\bar{\rho}(x_{0})=\rho_{1}(x_{0})+\varepsilon,
\end{equation*}
and by triangle inequality we obtain
\begin{equation*}
\rho_{1}(x)+\varepsilon\geq\operatorname{dist}_{M^{n}}(x, o)=\rho(x).
\end{equation*}
Thus, consider the test function
\begin{equation*}
\bar{\phi}(x)=\big(r^{2}-(\rho_{1}(x)+\varepsilon)^{2}\big)h(\frac{|\nabla u(x)|^2}{2})u_{\zeta\zeta}
\end{equation*}
at a neighbourhood of $x_{0}$. Then we obtain
\begin{equation*}
\bar{\phi}(x)\leq\phi(x)\leq\phi(x_{0})\leq\bar{\phi}(x_{0}),
\end{equation*}
which implies that $x_{0}$ is still a maximum point of $\bar{\phi}$. Then, by the same calculations and application of the Hessian comparison theorem as in the case $x_0 \notin \operatorname{Cut}(o)$, we can similarly obtain \eqref{3-ieq14}.

In summary, we obtain
\begin{equation*}
u_{\zeta\zeta}(o)\leq\frac{h(\frac{1}{2}|\nabla u(x_{0})|^{2})}{h(\frac{1}{2}|\nabla u(0)|^{2})}\frac{\rho(x_{0})^{2}}{r^{2}}u_{\zeta\zeta}(x_{0})\leq C(1+\sup|\nabla u|).
\end{equation*}
The proof is complete.

\section{Proof of Theorem \ref{maintheorem2}}  \label{S4}

In this section we prove Theorem \ref{maintheorem2}, continuing the computations from Section \ref{S3}. 

Suppose $u\in C^{4}(\Omega\cap C^{2}(\overline{\Omega}))$ is a solution of equation \eqref{Hessian quo-3} with $\eta\in\widetilde{\Gamma}_{k}$. Without loss of generality, we assume $u<0$ in $\Omega$. 
Consider the following test function
\begin{equation*}
\tilde{P}(x)=\log(-u)+\log \lambda_{max}(x)+\frac{a}{2}|\nabla u|^{2},
\end{equation*}
where $\lambda_{max}(x)$ is the biggest eigenvalue of the Hessian matrix $u_{ij}$. Suppose that $\tilde{P}(x)$ attains its maximum value in $\Omega$ at $x_{0}$. Choosing a local orthonormal frame field at $x_{0}$ such that $g_{ij}(x_{0})=\delta_{ij}$, and then rotating the coordinate axes, we can diagonalize the Hessian matrix $\nabla^{2}u=(u_{ij})$,
\begin{equation*}
u_{ij}(x_{0})=u_{ii}(x_{0})\delta_{ij},\quad\quad u_{11}(x_{0})\geq u_{22}(x_{0})\geq\cdots\geq u_{nn}(x_{0}).
\end{equation*}

On $\Omega$, define a new function
\begin{equation*}
P(x)=\log(-u)+\log u_{11}(x)+\frac{a}{2}|\nabla u|^{2},
\end{equation*}
which also attains a local maximum at $x_{0}$. Differentiating $P$ at $x_{0}$ once yields
\begin{equation}\label{4-eq1}
\frac{u_{i}}{u}+\frac{u_{11i}}{u_{11}}+au_{ii}u_{i}=0,
\end{equation}
and differentiating $P$ at $x_{0}$ twice,
\begin{equation}\label{4-ieq2}
\frac{u_{ii}}{u}-\frac{u_{i}^{2}}{u^{2}}+\frac{u_{11ii}}{u_{11}}-\frac{u_{11i}^{2}}{u_{11}^{2}}+a\sum_{p=1}^{n}u_{pii}u_{i}+au_{ii}^{2}\leq0.
\end{equation}
Thus, at $x_{0}$
\begin{equation}\label{4-ieq3}
\begin{aligned}
0\geq&~F^{ii}P_{ii}\\
\geq&~\frac{F^{ii}u_{ii}}{u}-\frac{F^{ii}u_{i}^{2}}{u^{2}}+\frac{F^{ii}u_{11ii}}{u_{11}}-\frac{F^{ii}u_{11i}^{2}}{u_{11}^{2}}+a\sum_{p=1}^{n}F^{ii}u_{pii}u_{p}+aF^{ii}u_{ii}^{2}.
\end{aligned}
\end{equation}
From \eqref{3-eq7-1}, \eqref{3-ieq8} and \eqref{4-eq1}, it follows that
\begin{equation}\label{4-ieq3-2}
\begin{aligned}
|f_{11}|\leq&~C(1+u_{11}+u_{11}^{2})+|\sum_{i=1}^{n}\partial_{u_{i}}fR^{m}_{1i1}u_{m}|+|\sum_{i=1}^{n}\partial_{u_{i}}fu_{11i}|\\
\leq&~C(1+u_{11}+u_{11}^{2})+Cau_{11}^{2}-\frac{C}{u}u_{11}.
\end{aligned}
\end{equation}
By \eqref{3-eq7-1}, \eqref{3-eq7-2}, \eqref{3-ieq9} and \eqref{4-ieq3-2}, we obtain
\begin{equation}\label{4-ieq4}
\begin{aligned}
F^{ii}u_{11ii}\geq&~F^{ii}u_{ii11}-\sum_{i=1}^{n}F^{ii}(Cu_{11}+C)\\
\geq&-(Ca+C)u_{11}^{2}-Cu_{11}+\frac{C}{u}u_{11}-C-\sum_{i=1}^{n}F^{ii}(Cu_{11}+C),
\end{aligned}
\end{equation}
where $C$ is a positive constant depending on $n$, $k$, $l$, $||u||_{C^{1}}$, $||R_{ijk}^{l}||_{C^{0}}$ and $||f||_{C^{2}}$. Applying the Cauchy–Schwarz inequality to \eqref{4-eq1} yields
\begin{equation}\label{4-ieq5}
-\frac{F^{ii}u_{11i}^{2}}{u_{11}^{2}}\geq-\frac{C}{u^{2}}\sum_{i=1}^{n}F^{ii}-Ca^{2}F^{ii}u_{ii}^{2}
\end{equation}
where $C$ is a positive constant depending on $n$ and $||u||_{C^{1}}$. By \eqref{3-eq7-1} and \eqref{3-eq5}, one drives
\begin{equation}\label{4-ieq6}
a\sum_{p=1}^{n}F^{ii}u_{pii}u_{p}=a\sum_{p=1}^{n}F^{ii}u_{iip}u_{p}+a\sum_{p=1}^{n}F^{ii}u_{m}u_{p}R_{ipi}^{m}\geq-Ca\sum_{i=1}^{n}F^{ii}-Cau_{11}-Ca.
\end{equation}
Choose $a$ sufficiently small. Plugging \eqref{4-ieq4}, \eqref{4-ieq5} and \eqref{4-ieq6} into \eqref{4-ieq7}, assuming $u_{11}(x_{0})\geq1$, we obtain
\begin{equation}\label{4-ieq7}
\begin{aligned}
0\geq&~\frac{Cu_{11}}{u}\sum_{i=1}^{n}F^{ii}-\frac{C}{u^{2}}\sum_{i=1}^{n}F^{ii}+(a-Ca^{2})F^{ii}u_{ii}^{2}+\frac{C}{u}\\
&-(Ca+C)\sum_{i=1}^{n}F^{ii}-(Ca+C)u_{11}-Ca-C\\
\geq&~\frac{1}{C}F^{11}u_{11}^{2}+\bigg(\frac{Cu_{11}}{u}-\frac{C}{u^{2}}-C\bigg)\sum_{i=1}^{n}F^{ii}-Cu_{11}-C+\frac{C}{u}
\end{aligned}
\end{equation}
where $C$ is a positive constant depending on $n$, $k$, $l$, $||R_{ijk}^{l}||_{C^{1}}$, $||f||_{C^{2}}$ and $||u||_{C^{1}}$. 
Lemma \ref{prop-6} (5) (6) implies that  $F^{11}\geq C\sum_{i}F^{ii}\geq C$.
Then at $x_{0}$ we obtain
\begin{equation*}
0\geq \bigg(\frac{1}{C}u_{11}^{2}+\frac{Cu_{11}}{u}-\frac{C}{u^{2}}-C\bigg)\sum_{i=1}^{n}F^{ii}-Cu_{11}-C+\frac{C}{u}
\end{equation*}
If $(-u)^{2}u_{11}^{2}(x_{0})$ is sufficiently large, combined with the bound $\sum_{i}F^{ii}\geq C$, the right-hand side of the inequality becomes positive, yielding a contradiction. So we complete the proof.

\section{Proof of Theorem \ref{maintheorem3}}  \label{S5}

In this section, we will use the idea in \cite{CTX21, RW25-2} to give the proof of Theorem \ref{maintheorem3}. 
Assume that $u\in C^{4}(\Omega\cap C^{2}(\overline{\Omega}))$ is a solution of equation \eqref{Hessian quo-3} with $\eta\in\widetilde{\Gamma}_{k}$. Without loss of generality, we assume $u<0$ in $\Omega$.  

Since $\overline{\Omega}$ is a compact domain, there exists a constant $\overline{K}_{\Omega}$ depending only on the Riemann curvature tensor.
Choose a point $p_{0} \in \Omega$; if $\overline{K}_{\Omega}>0$, we further require that $\overline{\Omega}$ is contained in the geodesic ball $B_{r}(p_{0})$ with $r < \pi/2\sqrt{\overline{K}_{\Omega}}$. For convenience, we just write $K:=\overline{K}_{\Omega}$ in the calculations.
Define a function $\rho$ on $\Omega$ as follows
\begin{equation}\label{def-rho}
\rho(x) = \operatorname{dist}_{M^{n}}(p_{0}, x),\quad \forall x\in\Omega,
\end{equation}
where $\operatorname{dist}_{M^{n}}(p_{0}, x)$ measures the geodesic distance between $p_{0}$ and $x$.

For convenience, we set $U[u]=(\tau\Delta u)I - \nabla^{2}u$ with $\tau\geq 1$, and introduce the following notations
\begin{equation*}
F(U)=\left[\frac{S_{n}(U)}{S_{l}(U)}\right]^{\frac{1}{n-l}},\quad T(\nabla^{2}u)=F(U),
\end{equation*}
\begin{equation*}
F^{ij}=\frac{\partial F}{\partial U_{ij}},\quad F^{ij,rs}=\frac{\partial^{2} F}{\partial U_{ij}\partial U_{rs}},\quad T^{ii}=\tau\sum_{i=1}^{n}F^{jj}-F^{ii}.
\end{equation*}
Consider the following test function
\begin{equation*}
\tilde{P}(x)=\beta\log(-u)+\log \lambda_{max}(x)+\frac{a}{2}|\nabla u|^{2}+A\rho(x)^{2}
\end{equation*}
where $\rho(x)$ is defined as \eqref{def-rho}, $\lambda_{max}(x)$ is the biggest eigenvalue of the Hessian matrix $u_{ij}$; $\beta$, $a$ and $A$ are positive constants which will be determined later. 

Suppose that $\tilde{P}(x)$ attains its maximum value in $\Omega$ at $x_{0}$. In the following we distinguish two cases according to the position of $x_{0}$. 

We first consider the case $x_{0} \notin \operatorname{Cut}(p_{0})$ (Note that when the sectional curvature of $M^{n}$ is nonpositive, the cut locus $\operatorname{Cut}(p_{0})=\emptyset$; hence the case $x\notin\operatorname{Cut}(p_{0})$ is automatically satisfied for all $x$. 
For the uniformity of the proof, we keep the same case distinction as in the general setting). 

Choosing a local orthonormal frame field $\{\xi_{i}\}_{i=1}^{n}$ at $x_{0}$ such that $g_{ij}(x_{0})=\delta_{ij}$, and then rotating the coordinate axes, we can diagonalize the Hessian matrix $\nabla^{2}u=(u_{ij})$,
\begin{equation*}
u_{ij}(x_{0})=u_{ii}(x_{0})\delta_{ij},\quad\quad u_{11}(x_{0})\geq u_{22}(x_{0})\geq\cdots\geq u_{nn}(x_{0}),
\end{equation*}
and then
\begin{equation*}
U_{11}(x_{0})\leq U_{22}(x_{0})\leq\cdots\leq U_{nn}(x_{0}).
\end{equation*}
So, by \eqref{prop-7-ieq2} we obtain
\begin{equation}\label{5-ieq1}
F^{11}(x_{0})\geq F^{22}(x_{0})\geq\cdots\geq F^{nn}(x_{0})>0,
\end{equation}
\begin{equation}\label{5-ieq2}
0<T^{11}(x_{0})\leq T^{22}(x_{0})\leq\cdots\leq T^{nn}(x_{0}).
\end{equation}

Now, we define a new function on $\Omega$
\begin{equation*}
P(x)=\beta\log(-u)+\log u_{11}(x)+\frac{a}{2}|\nabla u|^{2}+A\rho(x)^{2},
\end{equation*}
which also attains a local maximum at $x_{0}$. Differentiating $P$ at $x_{0}$ once yields
\begin{equation}\label{5-eq3}
\frac{\beta u_{i}}{u}+\frac{u_{11i}}{u_{11}}+au_{ii}u_{i}+A(\rho^{2})_{i}=0,
\end{equation}
and differentiating $P$ at $x_{0}$ twice,
\begin{equation}\label{5-ieq4}
\frac{\beta u_{ii}}{u}-\frac{\beta u_{i}^{2}}{u^{2}}+\frac{u_{11ii}}{u_{11}}-\frac{u_{11i}^{2}}{u_{11}^{2}}+a\sum_{p=1}^{n}u_{pii}u_{i}+au_{ii}^{2}+A(\rho^{2})_{ii}\leq0.
\end{equation}
Thus, at $x_{0}$
\begin{equation}\label{5-ieq5}
\begin{aligned}
0\geq&~T^{ii}P_{ii}\\
\geq&~\frac{\beta T^{ii}u_{ii}}{u}-\frac{\beta T^{ii}u_{i}^{2}}{u^{2}}+\frac{T^{ii}u_{11ii}}{u_{11}}-\frac{T^{ii}u_{11i}^{2}}{u_{11}^{2}}\\
&+a\sum_{p=1}^{n}T^{ii}u_{pii}u_{i}+aT^{ii}u_{ii}^{2}+AT^{ii}(\rho^{2})_{ii}.
\end{aligned}
\end{equation}
We now begin to estimate each term in \eqref{5-ieq5}. A direct calculation implies
\begin{equation}\label{5-ieq6}
\begin{aligned}
T^{ii}u_{ii}=&\sum_{i=1}^{n}\left(\tau\sum_{j=1}^{n}F^{jj}-F^{ii}\right)\left(\frac{\tau}{n\tau-1}\sum_{p=1}^{n}U_{pp}-U_{ii}\right)\\
=&\sum_{i=1}^{n}F^{ii}U_{ii}+\frac{n\tau^{2}}{n\tau-1}\sum_{j=1}^{n}F^{jj}\sum_{p=1}^{n}U_{pp}-\frac{\tau}{n\tau-1}\sum_{i=1}^{n}F^{ii}\sum_{p=1}^{n}U_{pp}-\tau\sum_{j=1}^{n}F^{jj}\sum_{i=1}^{n}U_{ii}\\
=&\sum_{i=1}^{n}F^{ii}U_{ii}.
\end{aligned}
\end{equation}
From Proposition \ref{prop-2} (5), we have
\begin{equation}\label{5-ieq7}
\begin{aligned}
T^{ii}u_{ii}=&F^{ii}U_{ii}\\
=&\frac{1}{n-l}\left[\frac{S_{n}(U)}{S_{l}(U)}\right]^{\frac{1}{n-l}-1}\frac{(n-l)S_{n}(U)S_{l}(U)-\alpha[\sigma_{l}(U)\sigma_{n-1}(U)-\sigma_{n}(U)\sigma_{l-1}(U)]}{S_{l}^{2}(U)}\\
=&\psi-\alpha Q.
\end{aligned}
\end{equation}
where $\psi=f^{\frac{1}{n-l}}$, $Q=\frac{\psi[\sigma_{l}(U)\sigma_{n-1}(U)-\sigma_{n}(U)\sigma_{l-1}(U)]}{(n-l)fS_{l}^{2}(U)}$. It follows directly from \eqref{N-M-ieq} that $Q>0$. Rewriting equation \eqref{Hessian quo-2} as
\begin{equation}\label{5-eq8}
F(U)=\psi,
\end{equation}
differentiating \eqref{5-eq8} once gives
\begin{equation*}
F^{ii}U_{iip}=\psi_{p},
\end{equation*}
which yields
\begin{equation}\label{5-eq9}
T^{ii}u_{iip}=\psi_{p},
\end{equation}
differentiating \eqref{5-eq8} twice gives
\begin{equation}\label{5-eq10}
F^{ij,rs}U_{iip}U_{rsp}+F^{ii}U_{iipp}=\psi_{pp}.
\end{equation}
By \eqref{3-ieq8} and \eqref{5-eq3}, we estimate $\psi_{11}$ as follows
\begin{equation}\label{5-ieq11}
\begin{aligned}
|\psi_{11}|\leq&~C(1+u_{11}+u_{11}^{2})+|\sum_{i=1}^{n}\partial_{u_{i}}\psi R^{m}_{1i1}u_{m}|+|\sum_{i=1}^{n}\partial_{u_{i}}\psi u_{11i}|\\
\leq&~C+(C+CA)u_{11}+(C+Ca)u_{11}^{2}+\frac{C\beta u_{11}}{-u}
\end{aligned}
\end{equation}
where $C$ is a positive constant depending on $n$, $l$, $\sup|\nabla\rho|$, $||u||_{C^{1}}$, $||R_{ijk}^{l}||_{C^{0}}$ and $||f||_{C^{2}}$.
From \eqref{3-eq7-1} and \eqref{3-eq7-2}, we obtain
\begin{equation}\label{5-eq12-1}
T^{ii}u_{11ii}\geq T^{ii}u_{ii11}-(Cu_{11}+C)\sum_{i=1}^{n}T^{ii},
\end{equation}
\begin{equation}\label{5-eq12-2}
T^{ii}u_{pii}u_{i}\geq T^{ii}u_{iip}u_{i}-C\sum_{i=1}^{n}T^{ii}.
\end{equation}
Then applying the concavity of $F$ together with \eqref{prop-7-ieq1}, \eqref{5-eq10} and \eqref{5-ieq11} yields at $x_{0}$
\begin{equation*}
\begin{aligned}
F^{ii}U_{ii11}&\geq-F^{ij,rs}U_{ii1}U_{rs1}-C-CAu_{11}-(C+Ca)u_{11}^{2}+\frac{C\beta u_{11}}{u}\\
&\geq -2\sum_{i=2}^{n}F^{1i,i1}U_{1i1}^{2}-C-CAu_{11}-(C+Ca)u_{11}^{2}+\frac{C\beta u_{11}}{u}\\
&\geq -2\sum_{i=2}^{n}F^{1i,i1}u_{1i1}^{2}-C-CAu_{11}-(C+Ca)u_{11}^{2}+\frac{C\beta u_{11}}{u},
\end{aligned}
\end{equation*}
which by \eqref{5-eq12-1} can be rewritten as
\begin{equation}\label{5-eq13}
\begin{aligned}
T^{ii}u_{11ii}\geq&~ -2\sum_{i=2}^{n}F^{1i,i1}u_{1i1}^{2}-C-CAu_{11}-(C+Ca)u_{11}^{2}\\
&+\frac{C\beta u_{11}}{u}-(Cu_{11}+C)\sum_{i=1}^{n}T^{ii}.
\end{aligned}
\end{equation}

If $x_{0}=p_{0}$, $\rho(x)$ is not differential at $p_{0}$ at this time; however, a standard fact in Riemannian geometry gives $\nabla^{2}(\rho^{2}) = 2g$ at $p_{0}$. Then we have
\begin{equation*}
(\rho^{2})_{ii}=\nabla^{2}(\rho^{2})(\frac{\partial}{\partial\xi^{i}},\frac{\partial}{\partial\xi^{i}})= 2g(\frac{\partial}{\partial\xi^{i}},\frac{\partial}{\partial\xi^{i}})=2.
\end{equation*}
If $x_{0}\neq p_{0}$, $\rho(x)$ is differentiable at $x_{0}$. Then by Lemma \ref{Hess-compar}, we have on $\Omega\backslash \operatorname{Cut}(p_{0})\cup\{p_{0}\}$ the following estimate
\begin{itemize}
\item If $\overline{K}_{\Omega}\le 0$, then no restriction on the size of $\Omega$ is needed, and the Hessian comparison theorem yields
\begin{equation*}
(\rho^{2})_{ii}=2\rho\rho_{ii}+2\rho_{i}^{2}\ge 2,
\end{equation*}
\item If $\overline{K}_{\Omega}>0$, then $\overline{\Omega}\subset B_{r}(p_{0})$ with $0<r<\frac{\pi}{2\sqrt{K}}$, and hence
\begin{equation*}
(\rho^{2})_{ii}\geq2\rho\sqrt{K}\cot(\sqrt{K}\rho)(1-\rho_{i}^{2})+2\rho_{i}^{2}\geq 2r\sqrt{K}\cot(\sqrt{K}r)>0.
\end{equation*}
\end{itemize}
Therefore, combining the above cases, we obtain
\begin{equation}\label{rho-hessian-ieq}
A(\rho^{2})_{ii}\geq c_{0}A>0,
\end{equation}
where $c_{0}$ is a positive constant depending on $||R_{ijk}^{l}||_{C^{0}}$. Here $A$ is a positive constant to be chosen sufficiently large later. For convenience, we regard $c_{0}A$ as a new constant and still denote it by $A$ in the following.
Plugging \eqref{5-ieq7}, \eqref{5-eq9}, \eqref{5-eq12-2}, \eqref{5-eq13} and \eqref{rho-hessian-ieq} into \eqref{5-ieq5}, assuming $u_{11}\geq1$, we have at $x_{0}$
\begin{equation}\label{5-ieq14}
\begin{aligned}
0\geq&~T^{ii}P_{ii}\\
\geq&~\frac{C\beta}{u}-\frac{\alpha\beta Q}{u}-\frac{\beta T^{ii}u_{i}^{2}}{u^{2}}-\frac{2}{u_{11}}\sum_{i=2}^{n}F^{1i,i1}u_{1i1}^{2}-C(1+A)-(C+Ca)u_{11}\\
&-C(1+a)\sum_{i=1}^{n}T^{ii}-\frac{T^{ii}u_{11i}^{2}}{u_{11}^{2}}+aT^{ii}u_{ii}^{2}+A\sum_{i=1}^{n}T^{ii}.
\end{aligned}
\end{equation}

Now we should divide our proof into two cases to deal with the bad term $-C u_{11}$ and third derivative terms, where the positive constant $\delta$ will be determined later. Note that in the estimates that follow, the letter $C$ represents a generic constant depending only on the known data of the problem (i.e. $n$, $l$, $\tau$, $\sup|\nabla\rho|$, $||u||_{C^{1}}$, $||R_{ijk}^{l}||_{C^{1}}$, $||f||_{C^{2}}$), and its value may change from line to line.

\textbf{Case 1.} $|u_{ii}|(x_{0})\leq\delta u_{11}(x_{0})$ for all $i\geq 2$.

To handle the bad term $-Cu_{11}$, we first establish the following lemma.

\begin{lemma}\label{5-lemma-1}
If we choose $u_{11}(x_{0})$ and $A$ sufficiently large, and $\delta$ sufficiently small, then we obtain at $x_{0}$
\begin{equation}\label{5-lemma-1eq1}
\sum_{i=1}^{n}T^{ii}+Q\geq Cu_{11}.
\end{equation}
\end{lemma}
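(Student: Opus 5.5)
The plan is to prove \eqref{5-lemma-1eq1} by bounding $\sum_i T^{ii}$ from below by a single diagonal term, using only the structure of $U$ in Case 1. The term $Q$ is nonnegative by \eqref{N-M-ieq} and is simply discarded. By definition $\sum_i T^{ii}=(n\tau-1)\sum_i F^{ii}\geq (n\tau-1)F^{11}$. Since $F^{11}$ is the largest of the $F^{ii}$ by \eqref{5-ieq1}, it suffices to show $F^{11}\geq c\,u_{11}$ at $x_0$ once $u_{11}(x_0)$ is large and $\delta$ is small. I do not expect the constant $A$ to play any role here; the smallness of $\delta$ and the largeness of $u_{11}$ should suffice.

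\textbf{Step 1 (eigenvalues of $U$ in Case 1).} Write $\lambda_1=u_{11}$ and $|\lambda_i|\leq\delta\lambda_1$ for $i\geq2$. Then
\begin{equation*}
U_{ii}=\tau\sum_j\lambda_j-\lambda_i\in\big[(\tau-C\delta)\lambda_1,(\tau+C\delta)\lambda_1\big]\quad (i\geq2),\qquad |U_{11}|\leq C\lambda_1 .
\end{equation*}
Hence for small $\delta$ every symmetric function of $(U|1)$ is comparable to the corresponding power of $\lambda_1$:
\begin{equation*}
\sigma_m(U|1)\approx\lambda_1^m,\qquad S_m(U|1)\approx\lambda_1^m+\alpha\lambda_1^{m-1}.
\end{equation*}

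\textbf{Step 2 (derivative in the first slot).} I would write
\begin{equation*}
F^{11}=\frac{1}{n-l}F^{1-(n-l)}\,\frac{\partial (S_n/S_l)}{\partial U_{11}}.
\end{equation*}
Since $F=\psi=f^{1/(n-l)}$ and $f$ is bounded above, the factor $F^{1-(n-l)}$ is bounded below. For the derivative I repeat the computation from the proof of Lemma~\ref{prop-6}(5): removing the $U_{11}$-dependence and using the Newton--MacLaurin type inequality \eqref{prop-5-ieq1} applied to $(U|1)$ gives
\begin{equation*}
\frac{\partial (S_n/S_l)}{\partial U_{11}}=\frac{S_{n-1}(U|1)S_l(U|1)-S_n(U|1)S_{l-1}(U|1)}{S_l^2(U)}\geq c\,\frac{S_{n-1}(U|1)S_l(U|1)}{S_l^2(U)} .
\end{equation*}

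\textbf{Step 3 (comparison with $u_{11}$).} Using Proposition~\ref{prop-2}(3), $S_l(U)=U_{11}S_{l-1}(U|1)+S_l(U|1)$. Together with Step 1 this gives $S_l(U)\leq C S_l(U|1)$, so
\begin{equation*}
F^{11}\geq c\,\frac{S_{n-1}(U|1)}{S_l(U|1)}\geq c\,\frac{\lambda_1^{n-1}+\alpha\lambda_1^{n-2}}{\lambda_1^{l}+\alpha\lambda_1^{l-1}}\geq c\,\lambda_1^{\,n-1-l}.
\end{equation*}
Since $l<n-1$, the exponent $n-1-l$ is at least $1$, and therefore $F^{11}\geq c\,u_{11}$ once $u_{11}\geq1$. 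This is exactly where the hypothesis $l<n-1$ of Theorem~\ref{maintheorem3} enters.

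\textbf{Main obstacle.} The delicate point is Step 2 when $\tau=1$. Then $U_{11}$ may be very small or even slightly negative, so the estimate must not rely on positivity of $U_{11}$. This is why I would use the formula that isolates $(U|1)$, together with the fact that $(U|1)$ lies well inside the positive cone. I also need to check that the Newton--MacLaurin inequality \eqref{prop-5-ieq1} applies to $(U|1)$ with the $\alpha$-terms and with the shift of indices from $k$ to $n-1$. If it fails for the boundary case $l=0$, I would instead note that $S_{-1}=0$, so the subtracted term vanishes and the bound is immediate.
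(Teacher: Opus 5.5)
Your proof is correct, but it takes a different route from the paper's in the key step.

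\textbf{The paper's route.} It works with the whole trace. It writes $(n\tau-1)\sum_iF^{ii}$ using the summation identities (citing Proposition~\ref{prop-2}(4)). The term $Q$ is then used precisely to cancel the correction $-\sigma_{n-1}\sigma_l+\sigma_n\sigma_{l-1}$ that these identities produce. Next it applies \eqref{prop-5-ieq1} to the full vector $U$, obtaining
\[
\sum_iT^{ii}+Q\geq \frac{2}{n}f^{\frac{1}{n-l}-1}\frac{S_{n-1}(U)}{S_l(U)}.
\]
Finally it bounds $\sigma_m(U)$ above and below by powers of $u_{11}$. The lower bound requires expanding in $U_{11}$, which may be negative, and taking $\delta$ small relative to binomial coefficients.

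\textbf{Your route.} You keep only the single coefficient $F^{11}$. Expanding both $S_n(U)$ and $S_l(U)$ in $U_{11}$ makes the numerator of $\partial(S_n/S_l)/\partial U_{11}$ independent of $U_{11}$. So the sign of $U_{11}$ never matters: only $|U_{11}|\le Cu_{11}$ enters, through $S_l(U)\le CS_l(U|1)$. You also never need $Q$, only $Q\ge 0$ to discard it.

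\textbf{The point you flagged is fine.} Inequality \eqref{prop-5-ieq1} is not literally stated for $k=n$ on the $(n-1)$-vector $\mu=(U|1)$. However, $S_j(\mu)=\sigma_j(\mu,\alpha)$ with $(\mu,\alpha)\in\mathbb{R}^n$ having nonnegative entries, so the classical Newton--MacLaurin inequality applies. It gives
\[
S_n(\mu)S_{l-1}(\mu)\le \frac{l}{n(n-l+1)}\,S_{n-1}(\mu)S_l(\mu),
\]
so $c\ge 1-\frac{l}{n(n-l+1)}$. Alternatively, use $S_n(\mu)=\alpha\sigma_{n-1}(\mu)$ and check directly. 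As you observed, $A$ plays no role in either argument.

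\textbf{What each approach buys.} Yours gives a stronger, $Q$-free bound $\sum_iT^{ii}\ge(n\tau-1)F^{11}\ge c\,u_{11}$. In fact $T^{ii}\ge\sum_{j\ne i}F^{jj}\ge F^{11}\ge c\,u_{11}$ for each $i\ge2$. This would also make unnecessary the term $-\frac{A}{2}Q$ that the paper carries in \eqref{5-ieq16} and later drops without comment. The paper's route stays uniform with the trace computation of Lemma~\ref{prop-6}(6), expressing the bound through the global ratio $S_{n-1}(U)/S_l(U)$.
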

\begin{proof}
All the following calculation are done at $x_{0}$. It follows consequently
\begin{equation*}
-(n-1)\delta u_{11}\leq U_{11}\leq[(\tau-1)+(n-1)\tau\delta]u_{11},
\end{equation*}
and
\begin{equation*}
[1-(n-2)\delta]u_{11}\leq U_{22}\leq\cdots\leq U_{nn}\leq[\tau-\delta+(n-1)\tau\delta]u_{11}.
\end{equation*}
Then, choosing $\delta$ sufficiently small and using the inequalities above, we obtain for $k\leq n-1$
\begin{equation*}
\begin{aligned}
\sigma_{k}(\eta)=&~\sigma_{k}(\eta|1)+\eta_{1}\sigma_{k-1}(\eta|1)\\
\geq&~C_{n-1}^{k}[1-(n-2)\delta]^{k}u_{11}^{k}-C_{n-1}^{k-1}(n-1)\delta[\tau-\delta+(n-1)\tau\delta]^{k-1}u_{11}^{k}\\
\geq&~\frac{u_{11}^{k}}{2}
\end{aligned}
\end{equation*}
where $\eta=(U_{11}, U_{11},\cdots, U_{nn})$ and we get the first equality by Proposition \ref{prop-1} (3). Similarly,
\begin{equation*}
\begin{aligned}
\sigma_{k}(\eta)=&~\sigma_{k}(\eta|1)+\eta_{1}\sigma_{k-1}(\eta|1)\\
\leq&~C_{n-1}^{k}[1-(n-2)\delta]^{k}u_{11}^{k}+C_{n-1}^{k-1}[(\tau-1)+(n-1)\tau\delta][\tau-\delta+(n-1)\tau\delta]^{k-1}u_{11}^{k}\\
\leq&~C(n,k)\tau^{k}u_{11}^{k}.
\end{aligned}
\end{equation*}
Moreover, by Proposition \ref{prop-2} (4) and \eqref{prop-5-ieq1}, we have
\begin{equation*}
\begin{aligned}
\sum_{i=1}^{n}T^{ii}+Q=&~(n\tau-1)\sum_{i=1}^{n}F^{ii}+Q\\
\geq&~\frac{1}{n-l}f^{\frac{1}{n-l}-1}\frac{2S_{n-1}S_{l}-(n-l+2)S_{l-1}S_{n}-\sigma_{n-1}\sigma_{l}+\sigma_{n}\sigma_{l-1}}{S_{l}^{2}}+Q\\
=&~\frac{1}{n-l}f^{\frac{1}{n-l}-1}\frac{2S_{n-1}S_{l}-(n-l+2)S_{l-1}S_{n}}{S_{l}^{2}}\\
\geq&~\frac{2}{n}f^{\frac{1}{n-l}-1}\frac{S_{n-1}}{S_{l}}.
\end{aligned}
\end{equation*}
Choosing $u_{11}$ sufficiently large and $n\geq l+2$,
\begin{equation*}
\sum_{i=1}^{n}T^{ii}+Q\geq C\tau^{-l}u_{11}^{n-1-l}\geq Cu_{11}.
\end{equation*}
So, we complete the proof.
\end{proof}

Next, we deal with the third derivatives terms.

\begin{lemma}\label{5-lemma-2}
We have at $x_{0}$ for $\delta\leq1$,
\begin{equation}\label{5-lemma-2eq1}
\begin{aligned}
\frac{2}{1+\delta}\sum_{i=2}^{n}\frac{T^{ii}u_{11i}^{2}}{u_{11}^{2}}\leq& -\frac{2}{u_{11}}\sum_{i=2}^{n}F^{1i,i1}u_{11i}^{2}+C\frac{\beta^{2}T^{11}}{u^{2}}+Ca^{2}\delta^{2}T^{11}u_{11}^{2}+CA^{2}T^{11}.\\
\end{aligned}
\end{equation}
\end{lemma}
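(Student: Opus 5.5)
The plan is to follow the argument of \cite{CTX21, RW25-2}, adapted to $U=(\tau\Delta u)I-\nabla^{2}u$ and with the curvature terms tracked. There are two ingredients. The first is a lower bound for the concavity term $-F^{1i,i1}$ in terms of $T^{ii}$. The second is the first-order condition \eqref{5-eq3}, which lets us trade $u_{11i}$ for $u_{11}(-\beta u_i/u - a u_{ii}u_i - A(\rho^2)_i)$ in the direction $i=1$.

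\textbf{Step 1: the concavity term.} By \eqref{prop-7-ieq1}, for $i\ge 2$ we have
\[
-F^{1i,i1}=\frac{F^{11}-F^{ii}}{U_{ii}-U_{11}}\ge 0,
\]
since $U_{11}\le U_{ii}$ and, by \eqref{5-ieq1}, $F^{11}\ge F^{ii}$. We also have $U_{ii}-U_{11}=u_{11}-u_{ii}$ and $F^{11}-F^{ii}=T^{ii}-T^{11}$. Hence
\[
-\frac{2}{u_{11}}F^{1i,i1}=\frac{2(T^{ii}-T^{11})}{u_{11}(u_{11}-u_{ii})}.
\]
If $u_{ii}\ge -\delta u_{11}$ (in Case 1 this holds for every $i\ge2$), then $u_{11}-u_{ii}\le (1+\delta)u_{11}$, and therefore
\[
-\frac{2}{u_{11}}F^{1i,i1}\ge \frac{2}{1+\delta}\frac{T^{ii}-T^{11}}{u_{11}^2}.
\]
If instead $u_{ii}<-\delta u_{11}$, I would still use $u_{11}-u_{ii}\le$ a multiple of $u_{11}$, because $U$ lies in the cone and this bounds the negative eigenvalues. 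The statement is only claimed with $\frac{2}{1+\delta}$, so I would restrict attention to the Case 1 situation, or note that the inequality needed here uses only the ordering. Multiplying by $u_{1i1}^2$ gives
\[
\frac{2}{1+\delta}\sum_{i\ge2}\frac{T^{ii}u_{11i}^2}{u_{11}^2}\le -\frac{2}{u_{11}}\sum_{i\ge2}F^{1i,i1}u_{1i1}^2+\frac{2}{1+\delta}\sum_{i\ge2}\frac{T^{11}u_{1i1}^2}{u_{11}^2}.
\]

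\textbf{Step 2: commuting derivatives.} By \eqref{3-eq7-1}, $u_{1i1}=u_{11i}+O(1)$, with an error $R^m_{1i1}u_m$ bounded by $C$. This lets us replace $u_{1i1}^2$ by $u_{11i}^2$ at the cost of extra terms. The error contributes $C T^{ii}/u_{11}$ to the left side, or $C T^{11}$ to the right side. The terms $CT^{ii}/u_{11}$ are lower order and get absorbed either into $A\sum T^{ii}$ later or into the constant $CA^2T^{11}$. I would arrange the error so that it ends up multiplied by $T^{11}$ (via Cauchy--Schwarz, $u_{1i1}^2\le (1+\epsilon)u_{11i}^2+C$), and accept a harmless $C\sum T^{ii}/u_{11}^2$.

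\textbf{Step 3: the remaining $T^{11}$ term.} It remains to bound $T^{11}\sum_{i\ge2}u_{11i}^2/u_{11}^2$. Using \eqref{5-eq3},
\[
\frac{u_{11i}}{u_{11}}=-\frac{\beta u_i}{u}-a u_{ii}u_i-A(\rho^2)_i .
\]
Squaring gives
\[
\frac{u_{11i}^2}{u_{11}^2}\le C\Bigl(\frac{\beta^2}{u^2}+a^2u_{ii}^2+A^2\Bigr),
\]
using $|u_i|\le C$ and $|(\rho^2)_i|=2\rho|\nabla\rho|\le C$. In Case 1, $u_{ii}^2\le \delta^2u_{11}^2$ for $i\ge2$. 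Summing over $i\ge 2$ produces exactly the terms $C\beta^2T^{11}/u^2+Ca^2\delta^2T^{11}u_{11}^2+CA^2T^{11}$.

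\textbf{Main obstacle.} The delicate point is Step 1–2: getting exactly the factor $\frac{2}{1+\delta}$ while the curvature error from commuting $u_{1i1}$ and $u_{11i}$ stays controlled by $T^{11}$-weighted terms, with no uncontrolled $\sum T^{ii}$ term. I would handle this by splitting the two cases $u_{ii}\ge -\delta u_{11}$ and $u_{ii}<-\delta u_{11}$, and in the latter using $T^{ii}\le T^{nn}$ together with the admissibility bound $|u_{ii}|\le Cu_{11}$.
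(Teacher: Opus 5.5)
Your Steps 1 and 3 are exactly the paper's proof. The paper combines \eqref{prop-7-ieq1} with $U_{ii}-U_{11}=u_{11}-u_{ii}$, $F^{11}-F^{ii}=T^{ii}-T^{11}$ and the Case 1 bound $u_{11}-u_{ii}\le(1+\delta)u_{11}$ to get \eqref{5-lemma-2ieq2}. It then bounds $T^{11}\sum_{i\ge2}u_{11i}^{2}/u_{11}^{2}$ by squaring \eqref{5-eq3} and using $u_{ii}^{2}\le\delta^{2}u_{11}^{2}$, which gives \eqref{5-lemma-2ieq3}.

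Your Step 2, however, comes from a misreading, and as written it breaks the argument.

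\begin{itemize}
\item The concavity term on the right of \eqref{5-lemma-2eq1} is $-\frac{2}{u_{11}}\sum_{i\ge2}F^{1i,i1}u_{11i}^{2}$, with $u_{11i}$ and not $u_{1i1}$. So you should multiply your pointwise bound for $-\frac{2}{u_{11}}F^{1i,i1}$ by $u_{11i}^{2}$. That gives \eqref{5-lemma-2ieq2} at once, with no curvature error.
\item The inequality you display after ``multiplying by $u_{1i1}^{2}$'' has $u_{11i}^{2}$ on the left and $u_{1i1}^{2}$ on the right, and it does not follow from the line before it.
\item Your proposed repair via $u_{1i1}^{2}\le(1+\epsilon)u_{11i}^{2}+C$ would degrade the constant $\frac{2}{1+\delta}$. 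It would also leave error terms of the type $C\sum_i T^{ii}/u_{11}^{2}$ or $-CF^{1i,i1}/u_{11}$. None of these appear in \eqref{5-lemma-2eq1}, so you would be proving a different inequality.
\end{itemize}

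In the paper, the exchange of $u_{1i1}$ and $u_{11i}$ happens only afterwards, in \eqref{5-ieq15}. There it is done by Young's inequality with a parameter $\gamma$. The error $\frac{C}{(1-\delta)u_{11}}(\frac{1}{\gamma}-1)\sum_i T^{ii}$ is absorbed later by the $A\sum_i T^{ii}$ term, not inside this lemma.

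The subcase $u_{ii}<-\delta u_{11}$ that you flag as the main obstacle never arises. The lemma sits inside Case 1, where $|u_{ii}|\le\delta u_{11}$ for every $i\ge2$. That hypothesis is genuinely used in both steps: the lower bound $u_{ii}\ge-\delta u_{11}$ gives the factor $\frac{2}{1+\delta}$, and the two-sided bound gives the $a^{2}\delta^{2}T^{11}u_{11}^{2}$ term. So your alternative remark that ``only the ordering'' is needed is not correct.
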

\begin{proof}
All the following calculation are done at $x_{0}$. Since $|u_{ii}|\leq\delta u_{11}$ for $i\geq2$, we have for all $i\geq 2$
\begin{equation*}
\frac{1}{u_{11}}\leq\frac{1+\delta}{u_{11}-u_{ii}},
\end{equation*}
then, we know from \eqref{prop-7-ieq1}
\begin{equation*}
\begin{aligned}
-\frac{2}{u_{11}}\sum_{i=2}^{n}F^{1i,i1}u_{11i}^{2}\geq&-\frac{2}{u_{11}}\sum_{i=2}^{n}\frac{F^{11}-F^{ii}}{U_{11}-U_{ii}}u_{11i}^{2}\\
=&-\frac{2}{u_{11}}\sum_{i=2}^{n}\frac{T^{ii}-T^{11}}{u_{ii}-u_{11}}u_{11i}^{2}\\
\geq&~\frac{2}{1+\delta}\sum_{i=2}^{n}\frac{T^{ii}-T^{11}}{u_{11}^{2}}u_{11i}^{2}.
\end{aligned}
\end{equation*}
Thus, we obtain
\begin{equation}\label{5-lemma-2ieq2}
\frac{2}{1+\delta}\sum_{i=2}^{n}\frac{T^{ii}u_{11i}^{2}}{u_{11}^{2}}\leq-\frac{2}{u_{11}}\sum_{i=2}^{n}F^{1i,i1}u_{11i}^{2}
+\frac{2}{1+\delta}\sum_{i=2}^{n}\frac{T^{11}u_{11i}^{2}}{u_{11}^{2}}.
\end{equation}
Using Cauchy-Schwarz inequality, from \eqref{5-eq3} at $x_{0}$ we get
\begin{equation}\label{5-lemma-2ieq3}
\begin{aligned}
\sum_{i=2}^{n}\frac{T^{11}u_{11i}^{2}}{u_{11}^{2}}\leq&~3\sum_{i=2}^{n}\frac{\beta^{2}T^{11}u_{i}^{2}}{u^{2}}+3a^{2}\sum_{i=2}^{n}T^{11}u_{i}^{2}u_{ii}^{2}+3A^{2}\sum_{i=2}^{n}T^{11}(\rho^{2})_{i}^{2}\\
\leq&~C\frac{\beta^{2}T^{11}}{u^{2}}+Ca^{2}\delta^{2}T^{11}u_{11}^{2}+CA^{2}T^{11}.
\end{aligned}
\end{equation}
Combining \eqref{5-lemma-2ieq2} and \eqref{5-lemma-2ieq3}, we complete the proof.
\end{proof}

Applying Young's inequality to equation \eqref{3-eq7-1}, we obtain
\begin{equation*}
u_{1i1}^{2}=(u_{11i}+u_{j}R^{j}_{1i1})^{2}\geq(1-\gamma)u_{11i}^{2}-(\frac{1}{\gamma}-1)(u_{j}R^{j}_{1i1})^{2},
\end{equation*}
where $0 < \gamma < 1$ is a positive constant to be chosen later. From the above inequality and \eqref{prop-7-ieq1}, we have
\begin{equation}\label{5-ieq15}
\begin{aligned}
-2\sum_{i=2}^{n}F^{1i,i1}u_{1i1}^{2}\geq&~-2(1-\gamma)\sum_{i=2}^{n}F^{1i,i1}u_{11i}^{2}+C(\frac{1}{\gamma}-1)\sum_{i=2}^{n}\frac{T^{11}-T^{ii}}{u_{11}-u_{ii}}\\
\geq&~-2(1-\gamma)\sum_{i=2}^{n}F^{1i,i1}u_{11i}^{2}-\frac{C}{(1-\delta)u_{11}}(\frac{1}{\gamma}-1)\sum_{i=2}^{n}T^{ii}
\end{aligned}
\end{equation}

Choose $\gamma$ and $a$ sufficiently small, and $A$ sufficiently large (with $A$ depending on $\tau^{-l}$). Substituting \eqref{5-lemma-1eq1}, \eqref{5-lemma-2eq1} and \eqref{5-ieq15} into \eqref{5-ieq14} yields
\begin{equation}\label{5-ieq16}
\begin{aligned}
0\geq&~T^{ii}P_{ii}\\
\geq&~\frac{C\beta}{u}-\frac{\alpha\beta Q}{u}-\frac{\beta T^{ii}u_{i}^{2}}{u^{2}}+\left(\frac{2(1-\gamma)}{1+\delta}-1\right)\sum_{i=2}^{n}\frac{T^{ii}u_{11i}^{2}}{u_{11}^{2}}\\
&-\frac{T^{11}u_{111}^{2}}{u_{11}^{2}}-C\frac{\beta^{2}T^{11}}{u^{2}}-Ca^{2}\delta^{2}T^{11}u_{11}^{2}-CA^{2}T^{11}\\
&+aT^{ii}u_{ii}^{2}+\frac{A}{2}\sum_{i=1}^{n}T^{ii}+Cu_{11}-\frac{A}{2}Q.
\end{aligned}
\end{equation}
By Cauchy-Schwarz inequality, from \eqref{5-eq3} and \eqref{5-ieq16} we obtain at $x_{0}$
\begin{equation*}
\begin{aligned}
0\geq&~T^{ii}P_{ii}\\
\geq&~\left(\frac{2(1-\gamma)}{1+\delta}-1-\frac{3}{\beta}\right)\sum_{i=2}^{n}\frac{T^{ii}u_{11i}^{2}}{u_{11}^{2}}+(\frac{a}{2}-\frac{Ca^{2}}{\beta})\sum_{i=2}^{n}T^{ii}u_{ii}^{2}\\
&+(\frac{a}{2}-Ca^{2}-Ca^{2}\delta^{2})T^{11}u_{11}^{2}+(\frac{A}{2}-\frac{CA^{2}}{\beta})\sum_{i=2}^{n}T^{ii}\\
&+\frac{A}{2}T^{11}-CA^{2}T^{11}-C\frac{\beta^{2}T^{11}}{u^{2}}+Cu_{11}+\frac{C\beta}{u}-\frac{\alpha\beta Q}{u}-\frac{A}{2}Q.
\end{aligned}
\end{equation*}
Fixing the chosen $\delta$, $\gamma$, $A$ as before. Now we further choose $a$ sufficiently small (possibly even smaller than before), and choosing $\beta$ sufficiently large. Then at $x_{0}$ we obtain
\begin{equation*}
0\geq\frac{a}{4}T^{11}u_{11}^{2}-(CA^{2}+C\frac{\beta^{2}}{u^{2}})T^{11}+\frac{C}{u}+Cu_{11}.
\end{equation*}
Choosing $(-u)^{2}u_{11}^{2}(x_{0})$ sufficiently large, we have at $x_{0}$
\begin{equation*}
0\geq\frac{C}{u}+Cu_{11},
\end{equation*}
which implies at $x_{0}$
\begin{equation*}
(-u)^{\beta}u_{11}\leq C.
\end{equation*}
So we complete the proof for case 1.

\textbf{Case 2.} $u_{22}(x_{0})>\delta u_{11}(x_{0})$ or $u_{nn}(x_{0})<-\delta u_{11}(x_{0})$ at $x_{0}$.

First, we need the following lemma to absorb the bad term $-Cu_{11}$.

\begin{lemma}\label{5-lemma-3}
If we choose $u_{11}(x_{0})\geq\frac{\gamma}{a\delta^{2}}$, then we have at $x_{0}$,
\begin{equation}\label{5-lemma-3ieq1}
\frac{a}{2}\sum_{i=1}^{n}T^{ii}u_{ii}^{2}\geq\gamma Cu_{11}.
\end{equation}
\end{lemma}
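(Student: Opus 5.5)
In Case 2, at $x_0$ either $u_{22}>\delta u_{11}$ or $u_{nn}<-\delta u_{11}$. The plan is to pick out one index $i_0\ge 2$ with $u_{i_0i_0}^2\ge \delta^2 u_{11}^2$, bound $T^{i_0i_0}$ from below by a fixed multiple of $\sum_{j}T^{jj}$, and then use the lower bound on $\sum_j T^{jj}$ from Lemma \ref{prop-6}(6). Since every term $T^{ii}u_{ii}^2$ is nonnegative (by \eqref{5-ieq2}),
\begin{equation*}
\frac{a}{2}\sum_{i=1}^{n}T^{ii}u_{ii}^{2}\ \ge\ \frac{a}{2}T^{i_0i_0}u_{i_0i_0}^{2}\ \ge\ \frac{a}{2}\,\delta^{2}u_{11}^{2}\,T^{i_0i_0}.
\end{equation*}

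\textbf{Step 1: a lower bound for $T^{i_0i_0}$.} The choice of $i_0$ gives $T^{i_0i_0}\ge T^{22}$, because \eqref{5-ieq2} makes $T^{ii}$ nondecreasing in $i$. So the task is to bound $T^{22}$ below in terms of $\sum_jT^{jj}$. Write $T^{22}=\tau\sum_jF^{jj}-F^{22}\ge(\tau-1)\sum_jF^{jj}+\sum_{j\ne2}F^{jj}$. By \eqref{5-ieq1}, $F^{11}\ge F^{22}$, so the last sum is at least $F^{11}\ge \frac1n\sum_jF^{jj}$. Hence
\begin{equation*}
T^{22}\ \ge\ \Big(\tau-1+\frac{1}{n}\Big)\sum_jF^{jj}\ =\ \frac{\tau-1+\frac1n}{n\tau-1}\sum_jT^{jj}\ \ge\ \frac{1}{n^{2}}\sum_jT^{jj}.
\end{equation*}
The last inequality holds since $(n\tau-1)\le n^2(\tau-1+\frac1n)$ for $\tau\ge1$.

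\textbf{Step 2: a uniform lower bound for $\sum_jT^{jj}$.} Lemma \ref{prop-6}(6), applied to the quotient $S_n/S_l$, gives $\sum_j\partial_{\lambda_j}(S_n/S_l)\ge c\,\tau f^{1-\frac{1}{n-l}}$. Passing to $F=(S_n/S_l)^{1/(n-l)}$ multiplies this by $\frac{1}{n-l}f^{\frac1{n-l}-1}$, so
\begin{equation*}
\sum_jT^{jj}\ \ge\ c(n,l)\,\tau\ \ge\ c(n,l).
\end{equation*}
Here the two factors of $f$ cancel exactly, so nothing depends on $f$ bounds; if needed, $\inf f>0$ on $\overline\Omega$ is available anyway.

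\textbf{Step 3: conclusion.} Combining the three bounds,
\begin{equation*}
\frac a2\sum_iT^{ii}u_{ii}^2\ \ge\ c\,a\delta^2u_{11}^2\ \ge\ c\,a\delta^2\cdot\frac{\gamma}{a\delta^2}\,u_{11}\ =\ c\,\gamma\, u_{11},
\end{equation*}
where the second inequality uses the hypothesis $u_{11}\ge \gamma/(a\delta^2)$. The generic constant $C=c$ here depends only on $n$ and $l$, which gives \eqref{5-lemma-3ieq1}.

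The only delicate point is Step 1, namely checking that the ordering \eqref{5-ieq2} really yields $T^{i_0i_0}\ge T^{22}$ with a constant uniform in $\tau$. This covers both alternatives: the case $u_{22}>\delta u_{11}$ uses $i_0=2$ directly, and the case $u_{nn}<-\delta u_{11}$ uses $i_0=n$, which has the largest coefficient $T^{nn}$.
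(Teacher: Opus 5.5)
Your proof is correct and follows essentially the same route as the paper: you isolate one diagonal term with $u_{i_0i_0}^2\ge\delta^2u_{11}^2$, bound $T^{22}$ from below via $F^{11}\ge\frac1n\sum_jF^{jj}$, and invoke Lemma \ref{prop-6} (6) for a uniform positive lower bound. Your version is slightly more careful than the written proof: the paper uses $T^{22}u_{22}^2\ge\delta^2T^{22}u_{11}^2$ in both alternatives, which is valid only when $u_{22}>\delta u_{11}$, whereas you take $i_0=n$ and use $T^{nn}\ge T^{22}$ when $u_{nn}<-\delta u_{11}$; you also keep the $(\tau-1)\sum_jF^{jj}$ term, which avoids the paper's $1/\tau$ loss.
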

\begin{proof}
All the following calculation are done at $x_{0}$. It follows from $u_{22}(x_{0})>\delta u_{11}(x_{0})$ or $u_{nn}(x_{0})<-\delta u_{11}(x_{0})$ at $x_{0}$,
\begin{equation}\label{5-lemma-3ieq2}
\sum_{i=1}^{n}T^{ii}u_{ii}^{2}\geq T^{22}u_{22}^{2}\geq\delta^{2}T^{22}u_{11}^{2}.
\end{equation}
Furthermore, we have by Lemma \ref{prop-6} (6)
\begin{equation}\label{5-lemma-3ieq3}
T^{22}=\tau\sum_{i=1}^{n}F^{ii}-F^{22}\geq F^{11}\geq\frac{1}{n}\sum_{i=1}^{n}F^{ii}\geq C.
\end{equation}
Plugging inequality \eqref{5-lemma-3ieq3} into \eqref{5-lemma-3ieq2} gives
\begin{equation}\label{5-lemma-3ieq4}
\frac{a}{2}\sum_{i=1}^{n}T^{ii}u_{ii}^{2}\geq Ca\delta^{2}u_{11}^{2}\frac{1}{\tau}\sum_{i=1}^{n}T^{ii}.
\end{equation}
and
\begin{equation*}
\frac{a}{2}\sum_{i=1}^{n}T^{ii}u_{ii}^{2}\geq Ca\delta^{2}u_{11}^{2}.
\end{equation*}
Choosing $u_{11}\geq\frac{\gamma}{a\delta^{2}}$ yields
\begin{equation*}
\frac{a}{2}\sum_{i=1}^{n}T^{ii}u_{ii}^{2}\geq \gamma Cu_{11}.
\end{equation*}
\end{proof}

By \eqref{5-lemma-3ieq1}, we choose $\gamma$ and $A$ sufficiently large. Then from \eqref{5-ieq14} we obtain that
\begin{equation}\label{5-ieq19}
\begin{aligned}
0\geq&~T^{ii}P_{ii}\\
\geq&~\frac{C\beta}{u}-\frac{\beta T^{ii}u_{i}^{2}}{u^{2}}-\frac{2}{u_{11}}\sum_{i=2}^{n}F^{1i,i1}u_{1i1}^{2}-\frac{T^{ii}u_{11i}^{2}}{u_{11}^{2}}\\
&+\frac{a}{2}T^{ii}u_{ii}^{2}+\frac{A}{2}\sum_{i=1}^{n}T^{ii}+Cu_{11}.
\end{aligned}
\end{equation}
By Cauchy-Schwarz inequality, from \eqref{5-eq3}, \eqref{5-lemma-3ieq4} and \eqref{5-ieq19}, fix the chosen $\beta$, $A$, $\gamma$ and $\delta$ as before and choosing sufficiently small $a$ yields at $x_{0}$
\begin{equation}\label{5-ieq21}
\begin{aligned}
0\geq&~T^{ii}P_{ii}\\
\geq&~\frac{C\beta}{u}-(\beta+C\beta^{2})\frac{ T^{ii}u_{i}^{2}}{u^{2}}+(\frac{a}{2}-Ca^{2})T^{ii}u_{ii}^{2}\\
&+(\frac{A}{2}-CA^{2})\sum_{i=1}^{n}T^{ii}+Cu_{11}\\
\geq&~\sum_{i=1}^{n}T^{ii}(Cu_{11}^{2}-\frac{C}{u^{2}}-C)+\frac{C}{u}+Cu_{11}\\
\geq&~\frac{C}{u}+Cu_{11},
\end{aligned}
\end{equation}
where we discard the positive term $-\frac{2}{u_{11}}\sum_{i=2}^{n}F^{1i,i1}u_{11i}^{2}$ to get the second inequality and we choose $(-u)^{2}u_{11}^{2}(x_{0})$ sufficiently large to get the last inequality. 

Next, we consider the case $x_{0} \in \operatorname{Cut}(p_{0})$. By the similar method as in the proof of Theorem \ref{maintheorem1}, we consider the test function
\begin{equation*}
\bar{P}(x)=\beta\log(-u)+\log \lambda_{max}(x)+\frac{a}{2}|\nabla u|^{2}+\frac{A}{2}(\rho_{1}(x)+\varepsilon)^{2}
\end{equation*}
at a neighbourhood of $x_{0}$. Then we obtain
\begin{equation*}
\bar{P}(x)\leq \tilde{P}(x)\leq \tilde{P}(x_{0})\leq\bar{P}(x_{0}),
\end{equation*}
which implies that $x_{0}$ is still a maximum point of $\bar{P}$. Then, by the same calculations and application of the Hessian comparison theorem as in the case $x_0 \notin \operatorname{Cut}(p_{0})$, we can similarly obtain $(-u)^{\beta}u_{11}\leq C$. Thus, we complete the proof.

\end{document}